\tikzset{black/.style={circle,fill=black,inner sep=3pt,outer sep=3pt},
         white/.style={circle,fill=white,draw=black,inner sep=3pt,outer sep=3pt},
}
\newtheorem{theorem}{Theorem}[section]
\newtheorem{thm}[theorem]{Theorem}
\newtheorem{cor}[theorem]{Corollary}
\newtheorem{lemm}[theorem]{Lemma}
\newtheorem{prop}[theorem]{Proposition}
\theoremstyle{definition}
\newtheorem{definition-theorem}[theorem]{Definition-Theorem}
\newtheorem{defi}[theorem]{Definition}
\newtheorem{exam}[theorem]{Example}
\newcommand{\tors}{\mathsf{tors}}
\newcommand{\torf}{\mathsf{torf}}
\newcommand{\ftors}{\mathsf{f\mbox{-}tors}}
\newcommand{\ftorf}{\mathsf{f\mbox{-}torf}}
\newcommand{\Hasse}{\mathsf{Hasse}}
\newcommand{\shard}{\mathsf{shard}}
\newcommand{\shardint}{\mathsf{shard\mbox{-}int}}
\newcommand{\jirr}{\mathsf{j\mbox{-}irr}}
\newcommand{\simp}{\mathsf{sim}}
\newcommand{\brick}{\mathsf{brick}}
\newcommand{\sbrick}{\mathsf{sbrick}}
\newcommand{\fLwide}{\mathsf{f_L\mbox{-}wide}}
\newcommand{\fLsbrick}{\mathsf{f_L\mbox{-}sbrick}}
\newcommand{\fRsbrick}{\mathsf{f_R\mbox{-}sbrick}}
\newcommand{\twosilt}{\mathsf{2\mbox{-}silt}}
\newcommand{\twopsilt}{\mathsf{2\mbox{-}psilt}}
\newcommand{\sw}{\mathsf{s.w}\hspace{.01in}}
\newcommand{\Up}{\mbox{\rm Up}\hspace{.01in}}
\newcommand{\Lo}{\mbox{\rm Lo}\hspace{.01in}}
\newcommand{\covered}{{\,\,<\!\!\!\!\cdot\,\,\,}}
\newcommand{\covers}{{\,\,\,\cdot\!\!\!\! >\,\,}}
\newcommand{\Db}{\mathsf{D}^{\rm b}}
\newcommand{\Kb}{\mathsf{K}^{\rm b}}
\newcommand{\TT}{\operatorname{\mathcal T}\nolimits}
\newcommand{\FF}{\operatorname{\mathcal F}\nolimits}
\newcommand{\VV}{\operatorname{\mathcal V}\nolimits}
\newcommand{\T}{\operatorname{\mathsf{T}}\nolimits}
\newcommand{\Cc}{\operatorname{\mathsf{C}}\nolimits}
\newcommand{\U}{\operatorname{\mathsf{U}}\nolimits}
\newcommand{\F}{\operatorname{\mathsf{F}}\nolimits}
\newcommand{\MM}{\operatorname{\mathcal M}\nolimits}
\newcommand{\NN}{\operatorname{\mathcal N}\nolimits}
\newcommand{\LL}{\operatorname{\mathcal L}\nolimits}
\newcommand{\PP}{\operatorname{\mathcal P}\nolimits}
\newcommand{\QQ}{\operatorname{\mathcal Q}\nolimits}
\newcommand{\UU}{\operatorname{\mathcal U}\nolimits}
\newcommand{\RR}{\operatorname{\mathcal R}\nolimits}
\newcommand{\JJ}{\operatorname{\mathcal J}\nolimits}
\newcommand{\C}{\mathfrak{C}}
\newcommand{\CC}{\mathscr{C}}
\newcommand{\R}{\mathbb{R}}
\newcommand{\sS}{\mathcal{S}}
\renewcommand{\span}{\operatorname{span}\nolimits}
\newcommand{\Filt}{\mathsf{Filt}}
\newcommand{\thick}{\mathsf{thick}} 
\newcommand{\silt}{\mathsf{silt}}
\newcommand{\wide}{\mathsf{wide}}
\newcommand{\swide}{\mathsf{single\mbox{-}wide}}
\newcommand{\SSS}{\mathbb{S}}
\newcommand{\add}{\mathsf{add}\nolimits}
\newcommand{\proj}{\mathsf{proj}}
\newcommand{\Hom}{\operatorname{Hom}\nolimits}
\newcommand{\RHom}{\mathbf{R}\strut\kern-.2em\operatorname{Hom}\nolimits}
\def\dim{\mathop{\mathrm{dim}}\nolimits}
\def\Im{\mathop{\mathrm{Im}}\nolimits}
\def\Hom{\mathop{\mathrm{Hom}}\nolimits}
\def\End{\mathop{\mathrm{End}}\nolimits}
\def\RHom{\mathop{\mathbb R\mathrm{Hom}}\nolimits}
\DeclareMathOperator{\moduleCategory}{\mathsf{mod}} \renewcommand{\mod}{\moduleCategory}
\newcommand{\W}{\mathsf{W}\hspace{.01in}}
\newcommand{\Sub}{\mathsf{Sub}\hspace{.01in}}
\newcommand{\Fac}{\mathsf{Fac}\hspace{.01in}}
\def\add{{\mathsf{add}}}
\begin{document}
\title[Shard theory for $g$-fans]{Shard theory for $g$-fans}

\author{Yuya Mizuno}
\address{Faculty of Liberal Arts, Sciences and Global Education, Osaka Metropolitan University, 1-1 Gakuen-cho, Naka-ku, Sakai, Osaka 599-8531, Japan}
\email{yuya.mizuno@omu.ac.jp}
\thanks{The author is supported by 
Grant-in-Aid for Scientific Research 20K03539.}
\thanks{\emph{Keywords}. torsion classes, shard, shard intersection,  semibricks, join-irreducible, $g$-polytope.}
\begin{abstract}
For a finite dimensional algebra $A$, the notion of $g$-fan $\Sigma(A)$ is defined from two-term silting complexes of $\Kb(\proj A)$ in the real Grothendieck group  $K_0(\proj A)_{\mathbb{R}}$. 
In this paper, we discuss the theory of shards to  $\Sigma(A)$, which was originally defined for a hyperplane arrangement. 
We establish a correspondence between the set of join-irreducible elements of torsion classes of $\mod A$ and the set of shards of $\Sigma(A)$ for $g$-finite algebra $A$. 
Moreover, we show that the semistable region of a brick of $\mod A$ is  exactly given by a shard. 
We also give a poset isomorphism of shard intersections and wide subcategories of $\mod A$.

\end{abstract}
\maketitle
\tableofcontents

\section{Introduction}

Let $A$ be a finite dimensional algebra. 
One of the fundamental aims of the representation theory of algebras is to 
study important objects and subcategories of the module category $\mod A$ or its derived category $\Db(\mod A)$. 
For this aim, many interesting notions and theories have been developed such as $\tau$-tilting modules, semibricks, silting complexes, simple-minded collections and so on (see \cite{AIR,Ri,MS,A1,AI,KV,KY} for example).
Recently, it has been found that combinatorial viewpoint plays an important role to investigate these notions. 
Among others, we can define a fan $\Sigma(A)$, called \emph{$g$-fan} (Definition \ref{g-fan}), which is defined in the real Grothendieck group $K_0(\proj A)_{\mathbb{R}}$ by the notion of $g$-vectors of two-term silting complexes of $\Kb(\proj A)$.
Surprisingly, the $g$-fan reflects not only the property of the partial order and mutation of the two-term silting complexes, but also uncovers several nontrivial results, which are difficult to know in a purely representation-theoretic way as shown in \cite{AHIKM,AMN}.

In this paper, we study further property of $g$-fans and, in particular, we explain that we can understand bricks, wide subcategories and canonical join representations in terms of $g$-fans. 
Our results can be summarized as follows.

\begin{thm}\label{}
Let $A$ be a finite dimensional algebra which is $g$-finite  (Definition \ref{g-finite}) and $\Sigma(A)$ the $g$-fan of $A$.
From $\Sigma(A)$, we can determine the following objects.
\begin{itemize}
\item Join irreducible elements of the set $\tors A$ of torsion classes.
\item A canonical join  representation of elements of the set $\tors A$ of torsion classes.  
\item The set $\brick A$ of bricks.
\item The set $\sbrick A$ of semibricks. 
\item The set $\swide A$ of wide subcategories generated by a single object.
\item The set $\wide A$ of wide subcategories. 
\item The partial order of $\wide A$.
\end{itemize}
\end{thm}

Thus the $g$-fan contains several fundamental data of the representation theory.   
To establish the theorem, we use the theory of \emph{shards}. 
Shards are certain codimension 1 cones and they were originally introduced by Reading for a hyperplane arrangement (see \cite{Re1,Re3,Re4}).  
Shards allow a new construction of noncrossing partition lattices and they were also used to provide a geometric description of lattice congruences \cite{Re4}. 
We extend the notion of shards to our setting (since $g$-fan is not necessary a hyperplane arrangement) and show that shards are quite natural and important objects to study bricks and wide subcategories. 
The key ingredient of our results is the following relationship between 
bricks and shards through King's stability conditions \cite{Ki}.

\begin{thm}[Theorem \ref{main1}]\label{1}
A shard gives precisely the semistable region of a brick. 
Moreover, we have a bijection between the set $\brick A$ of 
bricks  and the set $\shard A$ of shards of $\Sigma(A)$ by 
$$\xymatrix{\Theta(-):\brick A\ar[r]^{}&  \shard A,
}$$ 
where 
$\Theta(X):=\{\theta\in K_0(\proj A)_\mathbb{R}\mid X:\textnormal{$\theta$-semistable}\}$. 
\end{thm}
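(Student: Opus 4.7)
The plan is to factor the bijection $\Theta(-)$ through join-irreducible torsion classes, combining King's stability theory with the shard-theoretic combinatorics of $\Sigma(A)$. First I would invoke the well-known bijection $\brick A \leftrightarrow \jirr(\tors A)$: each join-irreducible torsion class $T$ covers a unique element $T_{*}$ in $\tors A$ and is labeled by the unique brick $B(T) \in T \cap T_{*}^{\perp}$. Since $A$ is $g$-finite, all torsion classes are functorially finite, so this applies without qualification. Composing with the bijection $\jirr(\tors A) \leftrightarrow \shard A$ established earlier in the paper already produces a map $\brick A \to \shard A$; the task is to identify that composite with $\Theta(-)$.

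Next I would unpack King's definition: $\theta \in \Theta(X)$ iff $\langle \theta, \dd X \rangle = 0$ and $\langle \theta, \dd Y \rangle \leq 0$ for every proper submodule $Y \subsetneq X$. From this, $\Theta(X)$ is a closed convex cone of codimension $1$ lying in the hyperplane $(\dd X)^{\perp}$. Writing $T(X) \in \jirr(\tors A)$ for the join-irreducible corresponding to $X$, the maximal cone of $T(X)$ in $\Sigma(A)$ lies on one side of this hyperplane and that of $T(X)_{*}$ on the other, so the relative interior of $\Theta(X)$ is contained in their common wall.

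The heart of the argument, and the main obstacle, is to show that $\Theta(X)$ coincides with the shard attached to $T(X)$, not merely with the ambient wall. I expect the cutting hyperplanes that subdivide walls into shards in $\Sigma(A)$ (the analogue of Reading's construction for hyperplane arrangements) to match exactly the submodule inequalities $\langle \theta, \dd Y \rangle \leq 0$: each genuine cut should arise as $(\dd Y)^{\perp}$ for a proper submodule $Y \subsetneq X$ that produces a join-irreducible strictly below $T(X)$, and conversely each such submodule should produce a real cut rather than a redundant inequality. I would prove this via a local analysis on rank-$2$ subfans of $\Sigma(A)$, where both the shard-cutting condition and the stability inequality reduce to a common two-dimensional picture and can be compared directly. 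Once this is in place, injectivity of $\Theta(-)$ is automatic, since $\Theta(X)$ determines $\dd X$ as the normal to its spanning hyperplane, and surjectivity follows from the two preliminary bijections.
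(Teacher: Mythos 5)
Your architecture matches the paper's: both factor $\Theta(-)$ through the bijection $\brick A \leftrightarrow \jirr(\tors A)$ given by brick labeling (from \cite{DIRRT,BCZ}) and the bijection $\jirr A \leftrightarrow \shard A$ of Theorem \ref{bij jirr and shard}, and both propose to verify commutativity by a local analysis on rank-$2$ subfans (the codimension-$2$ reduction of Corollary \ref{cor rank2}). Where you diverge is precisely at what you correctly identify as ``the heart of the argument,'' and this is where the proposal has a genuine gap.

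You propose to show $\Theta(B(\TT)) = \Sigma(\TT)$ by matching the shard-cuts with the binding King inequalities $\theta(Y) \le 0$ for submodules $Y \subsetneq X$, asserting that ``each genuine cut should arise as $(\dd Y)^{\perp}$ for a proper submodule $Y$.'' This is not established, and it is not what the cuts actually are in the paper's construction: a small wall $C(L)$ cuts a plate along an $(n-2)$-cone $\LL$ when $C(L)$ is a facet of the maximal or minimal chamber of $\C(A,\LL)$ --- a purely combinatorial condition phrased via the rank-$2$ subfan, not via a submodule of $X$. The wall hyperplanes $H(L)$ doing the cutting are spanned by $g$-vectors of presilting complexes and are labeled by \emph{other} bricks via the brick labeling; those bricks are not in general submodules of $B(\TT)$, so the asserted matching is not obvious and would need its own proof. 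The paper sidesteps this entirely: it never compares cuts to submodule hyperplanes, but instead uses Proposition \ref{Yurikusa} (from \cite{BST,Y}), namely $\W(\TT\cap\TT') = \Filt(B(\TT\to\TT'))$, together with the brick-labeling computation in the rank-$2$ interval (Lemma \ref{brick class}, Lemma \ref{simple difference}, Corollary \ref{semistable region}) to decide, wall by wall, whether $B(\TT)$ remains semistable. Concretely: $B(\TT)$ is semistable on an adjacent small wall exactly when that wall carries the same brick label, and Lemma \ref{brick class}(2) shows this happens precisely on the max/min facets of the rank-$2$ fan --- which is verbatim the shard condition. Your proposal identifies the right reduction but leaves this translation step --- the actual content of the theorem --- as an expectation rather than a proof, and the submodule-hyperplane heuristic you offer in its place does not straightforwardly close it. (A minor additional point: the remark that injectivity follows because $\Theta(X)$ ``determines $\dd X$'' is not by itself a proof of injectivity, since distinct bricks can share a dimension vector; but this is harmless since, as you note, injectivity is already carried by the two preliminary bijections once commutativity is established.)
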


The main result of \cite{DIRRT} gives a bijection between forcing equivalence classes of arrows of the Hasse quiver of torsion classes of $\mod A$ and isomorphism classes of bricks. 
From this viewpoint, the above result shows  that shards give a geometric counterpart of 
forcing equivalence classes.  
Moreover, this result can be regarded as a generalization of \cite{T1}.
Indeed, let $\Pi$ be a preprojective algebra of Dynkin type. Then the $g$-fan $\Sigma(\Pi)$ is the Coxeter fan \cite[Theorem 8.4]{AHIKM} and $\Sigma(\Pi)$ is a hyperplane arrangement and shards in our definition coincides with the original definition. 
In particular, Theorem \ref{1} implies \cite[Theorem 6]{T1}, where  it was shown that a shard in the Coxeter arrangement gives precisely the semistable region of a brick of $\mod\Pi$. 

Moreover, we can give an explicit relationship between join-irreducible elements, wide subcategory, bricks and shards together with several known results \cite{A1,BCZ,DIRRT,MS,Ri}.

\begin{thm}[see Theorems \ref{bij jirr and shard},\ref{main1} for details]\label{2}
We have the following commutative diagrams, and all maps are bijections.  
\[
\begin{xy}
(-45,0)*{\jirr (\C(A))}="A",
(45,0)*{\shard A}="B",
(-45,-25)*{\brick A}="C",
(45,-25)*{\swide A}="D",
(-45,0)*{}="E",
(45,0)*{}="F",
(-45,-25)*{}="G",
(45,-25)*{}="H",

\ar@<1ex> "A";"B"^{\Sigma(-)}
\ar@<1ex> "B";"A"^{\JJ(-)}
\ar@<1ex> "A";"C"^{B(-)}
\ar@<1ex> "C";"A"^{\T(-)}
\ar@<1ex> "B";"D"^{\W(-)}
\ar@<1ex> "D";"B"^{\Theta(-)}
\ar@<1ex> "C";"D"^{\Filt(-)}
\ar@<1ex> "D";"C"^{\simp(-)}
\end{xy}
\]
\end{thm}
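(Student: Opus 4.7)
The plan is to present Theorem \ref{2} as an assembly: three of the four arrow-pairs come from classical results combined with Theorem \ref{main1}, and the fourth (the top edge, the bijection between $\jirr(\C(A))$ and $\shard A$) admits a short definition by composition. Once all four pairs are seen to be mutually inverse bijections, commutativity of the four inner squares is formal. The substantive new geometric content --- identifying the composition-defined map $\Sigma(-)$ with an intrinsic shard-labelling of the $g$-fan --- is exactly Theorem \ref{bij jirr and shard}.

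First I would collect the inputs. For any finite-dimensional algebra one has $B(-)\colon\jirr(\C(A))\rightleftarrows\brick A\colon\T(-)$ from \cite{DIRRT,BCZ}: a join-irreducible $T$ has a unique lower cover $T^*\covered T$, and the quotient abelian category $T/T^*$ has a unique simple, a brick $B(T)$; conversely $\T(X)$ is the smallest torsion class containing the brick $X$. Ringel's theorem \cite{Ri}, refined in \cite{MS,A1}, gives $\Filt(-)\colon\brick A\rightleftarrows\swide A\colon\simp(-)$ by $X\mapsto\Filt(X)$ (whose unique simple object is $X$). Theorem \ref{main1} supplies the bijection $\Theta(-)\colon\brick A\to\shard A$ by $X\mapsto\{\theta\in K_0(\proj A)_{\mathbb{R}}\mid X\text{ is $\theta$-semistable}\}$. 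From these I set $\Theta(\wW):=\Theta(\simp(\wW))$ and $\W(\sigma):=\Filt(\Theta^{-1}(\sigma))$ for the right edge, and $\Sigma(T):=\Theta(B(T))$ and $\JJ(\sigma):=\T(\Theta^{-1}(\sigma))$ for the top edge. A direct unwinding, e.g.\ $\W(\Sigma(T))=\Filt(B(T))$, shows that the four inner squares commute and that all arrow-pairs are mutually inverse bijections.

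The main obstacle is Theorem \ref{bij jirr and shard}: that $\Sigma(T)$ defined above really is the ``shard'' of the $g$-fan $\Sigma(A)$ attached to the covering relation $T^*\covered T$ in the geometric sense that extends Reading's hyperplane-arrangement theory. Concretely, one must show that the two-term silting cones realizing $T$ and $T^*$ meet along a codimension-one face, that the extended shard construction in the not-necessarily-hyperplane fan $\Sigma(A)$ decomposes this wall into pieces with one piece naturally labelled by $T$, and that this piece coincides with the semistable region $\Theta(B(T))$. Once this intrinsic/extrinsic matching is established, the bijectivity and commutativity statements in Theorem \ref{2} follow purely formally from the bijections assembled above.
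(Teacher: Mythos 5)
Your overall assembly strategy matches the paper's, but there is a genuine circularity in how you set up the inputs, and you mischaracterize what Theorem~\ref{bij jirr and shard} actually says.

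You treat ``Theorem~\ref{main1} supplies the bijection $\Theta(-)\colon\brick A\to\shard A$'' as a given and then define $\Sigma(T):=\Theta(B(T))$, $\JJ(\sigma):=\T(\Theta^{-1}(\sigma))$, etc.\ by composition. But the $\brick A\to\shard A$ bijection is not an independent ingredient: in the paper, Theorem~\ref{main1}(1) is deduced \emph{from} the commutative diagram (2), which is exactly the statement you are asked to prove. Taking it as input makes the argument circular. In particular, it is not even clear a priori that $\Theta(B(T))$ is a shard, nor that $\Theta$ restricted to bricks is injective or hits every shard---these are precisely what the geometric work in the paper establishes.

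You also fold two logically separate facts into ``Theorem~\ref{bij jirr and shard}.'' In the paper, Theorem~\ref{bij jirr and shard} is a purely lattice/fan-theoretic statement: the intrinsic map $\TT\mapsto\Sigma(\TT)$ (the shard containing the facet $\TT\cap\TT^*$) together with $\Sigma\mapsto\JJ(\Sigma)=\min\Up(\Sigma)$ is a bijection $\jirr(\C(A))\leftrightarrow\shard A$, proven via Proposition~\ref{unique minimal}, semidistributivity of $\tors A$, and the rank-$2$ reduction of Corollary~\ref{cor rank2}; no bricks or semistability appear. The identity $\Theta(B(\TT))=\Sigma(\TT)$, which is the nontrivial extrinsic/intrinsic matching your last paragraph describes, is a \emph{separate} argument proved inside Theorem~\ref{main1}(2): one starts from Proposition~\ref{Yurikusa} ($\W(\TT\cap\TT^*)=\Filt(B(\TT\to\TT^*))$), then crawls facet-by-facet along the shard using Corollary~\ref{semistable region} (which in turn rests on Lemma~\ref{brick class} and Lemma~\ref{simple difference}) to show $B(\TT)$ stays semistable exactly on $\Sigma(\TT)$ and fails to be semistable across every cut. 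Your proposal names neither Proposition~\ref{Yurikusa} nor the crawling argument, nor Corollary~\ref{semistable region}, so the substantive geometric content is absent. The correct logical order is: (i) prove the intrinsic $\jirr\leftrightarrow\shard$ bijection; (ii) prove $\Theta(B(\TT))=\Sigma(\TT)$; (iii) only then is the rest of the diagram formal.
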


Thus shards are also quite fundamental objects from the viewpoint of the representation theory.
Moreover, we study intersections of shards (Definition \ref{defi shard int}). 
It was shown in \cite{Re3} that the partial order of shard intersections admit several nice properties such as being graded.  
From the viewpoint of the representation theory, this partial order corresponds to the one of wide subcategories, and they also provide a geometric description of canonical join representations in terms of $g$-fans.

\begin{thm}[see Theorems \ref{bij jirr and shard}, \ref{bij jirr and shard2} for details]\label{main2}
\begin{itemize}
\item[(1)] For any $\RR\in\C(A)$, we have 
$$\RR=\bigvee_{i=1}^k\JJ(\Sigma_i),$$ 
where $\Sigma_i$ runs over all lower shards of $\RR$ and it is a canonical join representation.

\item[(2)] 
There exists a poset anti-isomorphism
$$\xymatrix{\wide A\ar@<0.5ex>[rr]^(.45){\Theta(-)}&
&\ar@<0.6ex>[ll]^(.55){\W(-)} \shardint A.
}$$

\item[(3)]
We have the following commutative diagrams, and all maps  are 
bijections.   
\[
\begin{xy}
(-45,0)*{\C (A)}="A",
(45,0)*{\shardint A}="B",
(-45,-25)*{\sbrick A}="C",
(45,-25)*{\wide A}="D",
(-45,0)*{}="E",
(45,0)*{}="F",
(-45,-25)*{}="G",
(45,-25)*{}="H",

\ar@<1ex> "A";"B"^{\bigcap\Sigma(-)}
\ar@<1ex> "B";"A"^{\bigvee\JJ(-)}
\ar@<1ex> "A";"C"^{\{B(-)\}}
\ar@<1ex> "C";"A"^{\vee \TT(-)}
\ar@<1ex> "B";"D"^{\W(-)}
\ar@<1ex> "D";"B"^{\Theta(-)}
\ar@<1ex> "C";"D"^{\Filt(-)}
\ar@<1ex> "D";"C"^{ \simp(-)}
\end{xy}
\]
\end{itemize}
\end{thm}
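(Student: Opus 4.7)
The plan is to bootstrap the three statements from the single-element bijections in Theorem \ref{2}, combined with the theory of canonical join representations in semidistributive lattices and Ringel's correspondence between wide subcategories and semibricks.

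For part (1), I would start from the fact that, for $g$-finite $A$, the lattice $\tors A$ is finite and semidistributive, so every $\RR$ admits a unique canonical join representation $\RR=\bigvee_{j}\RR_j$ in terms of join-irreducible elements, and these canonical joinands biject with the lower covers of $\RR$. The geometric step is to verify that each lower cover $\UU\covered\RR$ determines exactly one codimension-$1$ wall between the cones of $\UU$ and $\RR$ in $\Sigma(A)$, and that this wall is contained in a unique shard, which one declares to be a lower shard of $\RR$. Composing with the bijection $\Sigma(-):\jirr(\C(A))\to\shard A$ of Theorem \ref{2} identifies each canonical joinand of $\RR$ with the join-irreducible $\JJ(\Sigma_i)$ attached to a lower shard $\Sigma_i$, which yields the stated formula and its canonicity.

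For part (2), I would define $\Theta:\wide A\to\shardint A$ by $\Theta(\wW):=\bigcap_i\Theta(B_i)$, where $\{B_i\}=\simp(\wW)$, and $\W:\shardint A\to\wide A$ on a shard intersection $\bigcap_i\Sigma_i$ by $\W(\bigcap_i\Sigma_i):=\Filt(\{B(\Sigma_i)\})$. By Theorem \ref{1} each $\Theta(B_i)$ is a shard, so $\Theta$ does land in $\shardint A$, and the construction factors as Ringel's bijection $\simp:\wide A\to\sbrick A$ followed by the single-shard bijection of Theorem \ref{2} applied termwise. Mutual inverseness then reduces to these two known bijections. The order-reversing property is then clear: a larger wide subcategory has a larger semibrick of simples, so its image is an intersection of strictly more shards, hence is a smaller subset of $K_0(\proj A)_{\mathbb R}$.

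For part (3), I would assemble the commutative square from the preceding bijections. The top arrow $\C(\tors A)\to\shardint A$ sends $\RR$ to the intersection of the shards attached to its canonical joinands, which by part (1) is the intersection of the lower shards of $\RR$; the bottom arrow is Ringel's $\Filt/\simp$ bijection between $\sbrick A$ and $\wide A$; the right arrow is the anti-isomorphism of part (2); and the left arrow sends a torsion-class completion to its semibrick of canonical joinand-bricks $\{B(\RR_j)\}$. Commutativity is checked by chasing a generic semibrick $\{B_i\}$ around the square: the two paths produce, respectively, $\Theta(\Filt\{B_i\})=\bigcap_i\Theta(B_i)$ and $\bigcap_i\Sigma(\T(B_i))$, which coincide since $\Theta(B_i)=\Sigma(\T(B_i))$ by Theorem \ref{2}.

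The main obstacle, I expect, is the geometric identification in part (1) of the lower covers of $\RR$ in $\tors A$ with the lower shards of the cone $\Cc(\RR)$ in the $g$-fan; this requires a careful analysis of how the codimension-$1$ walls of $\Sigma(A)$ decompose into shards and how this decomposition is compatible with mutation and the covering relation in $\tors A$. Once this local picture is established, the rest of the proof is a bookkeeping exercise combining Theorems \ref{1}, \ref{2} with standard lattice-theoretic facts.
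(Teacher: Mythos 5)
Your overall strategy matches the paper's in spirit, but two steps in your proposal are not merely unfinished bookkeeping: they are the actual mathematical content of the theorem, and one of them as you state it is incorrect.

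For part (1), you correctly observe that semidistributivity (Theorem \ref{lattice}) guarantees a canonical join representation indexed by the lower covers of $\RR$. You then say that ``composing with the bijection $\Sigma(-)$'' identifies each canonical joinand with $\JJ(\Sigma_i)$, and you flag the geometric identification as ``the main obstacle.'' It is indeed the main obstacle, and leaving it unresolved leaves the proof without its core. What is needed is the identity $\gamma(\RR\to\RR') = \JJ(\Sigma(\RR\covers\RR'))$, where $\gamma$ is the map of Theorem \ref{T2,RST} sending a Hasse arrow to the minimum $x$ with $\RR' \vee x = \RR$. The paper isolates this as Corollary \ref{poset=shard}, and its proof requires the walk-along-the-shard construction of Lemma \ref{seq of poset} together with the rank-2 lattice structure of $\C(A,\LL)$ from Corollary \ref{cor rank2} and semidistributivity. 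Nothing in your proposal supplies a substitute for this argument; the bijection $\jirr A \leftrightarrow \shard A$ alone does not tell you which join-irreducible the canonical joinand of a given lower cover is.

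For part (2), your proposed order-reversing argument is wrong. You claim that if $\wW \subset \wW'$ in $\wide A$, then $\simp \wW \subset \simp \wW'$, so $\Theta(\wW')$ is cut out by strictly more shards. This fails already for $A = k(1\to 2)$: the wide subcategory $\Filt(P_1)$ is contained in $\mod A$, but $\simp(\Filt(P_1)) = \{P_1\}$ is not a subset of $\simp(\mod A) = \{S_1,S_2\}$. The containment of wide subcategories does \emph{not} induce a containment of their simple objects. The paper avoids this pitfall by establishing $\Theta$ and $\W$ as an antitone Galois connection (Proposition \ref{semistable wide2}), for which order-reversal is immediate from the definitions, and then reducing the claim to $\Im\Theta = \shardint A$. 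A related gap in your part (2) is the well-definedness of your map $\W(\bigcap_i\Sigma_i) := \Filt(\{B(\Sigma_i)\})$: the same shard intersection can arise from different families of shards, and you do not address why the resulting wide subcategory is independent of the choice. The paper's $\W$ is defined intrinsically as the joint semistable subcategory, so this issue does not arise. Your part (3) is in line with the paper once (1) and (2) are in place.
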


Theorem \ref{main2} provides a direct connection between shard intersections and wide subcategories.  
In \cite{En}, he explained how to obtain the partial order of wide subcategory from the one of torsion classes. 
Theorem \ref{bij jirr and shard2} can be regarded as a geometric 
counterpart of this relationship. 
As an application, we can also show that the partial order of $\wide A$ is graded.

\section{Preliminaries}
In this section, we give some background materials and recall necessary results.

\subsection{Finite lattice}
We recall basic terminology of a partially ordered set.

\begin{defi}
Let $P$ be a partially ordered set.
\begin{itemize}
\item[(1)]
For $x,y,z \in P$,
the element $z$ is called the \emph{meet} of $x$ and $y$
if $z$ is the maximum element satisfying $z \le x$ and $z \le y$.
In this case, $z$ is denoted by $x \wedge y$.
\item[(2)]
For $x,y,z \in P$,
the element $z$ is called the \emph{join} of $x$ and $y$
if $z$ is the minimum element satisfying $z \ge x$ and $z \ge y$.
In this case, $z$ is denoted by $x \vee y$.
\item[(3)]
The set $P$ is called a \textit{lattice}
if $P$ admits the meet $x \wedge y$ and the join $x \vee y$ for any $x,y \in P$.
\end{itemize}
\end{defi}
In this paper, we basically consider  finite lattices.

\begin{defi}\label{def join}
Let $P$ be a finite lattice.
\begin{itemize}
\item[(1)] An element of $x\in P$ is called \emph{join-irreducible} 
if it is not the minimum element of $P$ and if $x=y\vee z$ 
for some $y,z \in P$, then $y = x$ or $z = x$, or equivalently, the join-irreducible elements are those which cover precisely the one element.
We denote by $\jirr(P)$ the set of join-irreducible elements in $P$.

\item[(2)] 
We call $C\subset P$ a \emph{canonical join representation} if
\begin{itemize}
\item[(i)] $x=\bigvee_{c \in C} c$. 
\item[(ii)] For any proper subset $C' \subsetneq C$,
the join $\bigvee_{c \in C'} c$ never coincides with $x$. 
\item[(iii)] If $U \subset P$ satisfies the properties (i) and (ii), then, 
for every $c \in C$, there exists $u \in U$ such that $ c\le u$.
\end{itemize}

In this case, we also call $x=\bigvee_{c \in C} c$ a canonical join representation. Note that if $x=\bigvee_{c \in C} c$ is a canonical join representation, then it is unique and each element $c\in C$ is join-irreducible. 

\item[(3)] $P$ is called semidistributive if it satisfies the two following conditions:
\begin{itemize}
\item[(i)] For $a,b,c\in P$, $a\wedge b=a\wedge c$ implies $a\wedge b=a\wedge(b\vee c)$.
\item[(ii)] For $a,b,c\in P$, $a\vee b=a\vee c$ implies $a\vee b=a\vee(b\wedge c)$.
\end{itemize}
\end{itemize}
\end{defi}

Let $P$ be a finite poset. 
For $x,y \in P$, if $x$ covers $y$, that is, 
$x>y$ and there does not exist $z\in P$ satisfying $x>z>y$, then we denote by $x\covers y$. 
Recall that the \emph{Hasse quiver} $\Hasse (P)$ of a poset $P$ has the set $P$ of vertices, and an arrow $x\to y$ if $x\covers y$.


\subsection{Torsion classes, wide subcategories, semibricks and 2-term silting complexes}


Throughout the paper, 
let $A$ be a finite dimensional algebra over a field $k$.
We denote by $\mod A$ the category of finitely generated right $A$-modules and by $\proj A$ the category of finitely generated projective $A$-modules.
We denote by $\Db(\mod A)$ the bounded derived category of $\mod A$ and by $\Kb(\proj A)$ the bounded homotopy category of $\proj A$. 
For an object $X$, we denote by $|X|$ the number of non-isomorphic indecomposable direct summands of $X$. 
For a full subcategory $\Cc \subset \mod A$, 
$\Filt \Cc$ consists of the objects $M$ 
such that there exists a sequence $0 =M_0 \subset M_1 \subset \cdots \subset M_n=M$
with $M_i/M_{i-1} \in \add \Cc$.

We recall the notion of torsion classes.

\begin{defi}\label{tosion class}
\begin{itemize}
\item[(1)] A full subcategory of $\mod A$ is called a \emph{torsion class} (respectively, \emph{torsion-free class}) if it is closed under extensions and factor modules (respectively, submodules).
A torsion class $\T$ (resp. torsion-free class $\F$) is called \emph{functorially finite} if there exists $M\in\mod A$ satisfying $\T=\Fac M$ (respectively, $\F=\Sub M$).
\item[(2)] A full subcategory $\W$ of $\mod A$ is called a \emph{wide subcategory} if it is closed under kernels, cokernels, and extensions of $\mod A$. 

\item[(3)] A wide subcategory $\W$ is called  \emph{single} if $\W$ has only one simple object (i.e., $\W=\Filt(S)$, where $S$ is the simple of $\W$). Moreover, 
a wide subcategory $\W$ is called  \emph{functorially finite} if 
the smallest torsion class containing $\W$ is functorially finite.

\end{itemize}
\end{defi}

We denote by $\tors A$ (respectively, $\ftors A$, $\torf A$, $\ftorf A$, $\wide A$, $\swide A$) the set of torsion classes (respectively, functorially finite torsion classes, torsion-free classes, functorially finite torsion-free classes, wide subcategories, single wide subcategories) in $\mod A$. These categories are naturally regarded as posets defined by inclusion.

Next we recall some objects which give the above categories.

\begin{defi}\label{semibricks}
\begin{itemize}
\item[(1)] 
A module $S$ in $\mod A$ is called a \textit{brick} 
if $\End_A(S)$ is a division $K$-algebra.
We denote by $\brick A$ the set of isoclasses of bricks in $\mod A$.  
\item[(2)] 
A subset $\sS \subset \brick A$ is called a \emph{semibrick} 
if $\Hom_A(S_i,S_j)=0$ holds for any $S_i \ne S_j \in \sS$.
We denote by $\sbrick A$ the set of semibricks in $\mod A$.   
\item[(3)] We say that a semibrick $\sS$ is \emph{left finite} (resp. \emph{right finite}) if the smallest torsion class (resp. torsion-free class)  $\T(\sS)\subset\mod A$ (resp. $\F(\sS)\subset\mod A$) containing $\sS$ is functorially finite. 
We denote by $\fLsbrick A$ (resp. $\fRsbrick A$) the set of left finite (resp. right finite) semibricks in $\mod A$.   
\end{itemize}
\end{defi}

\begin{defi}\label{silting}
Let $T=(T^i,d^i)\in\Kb(\proj A)$.
\begin{enumerate}
\item $T$ is called  \emph{presilting} if $\Hom_{\Kb(\proj A)}(T,T[\ell])=0$ for all  $\ell>0$.
\item $T$ is called \emph{silting} if it is presilting and $\Kb(\proj A)=\thick T$, where $\thick(T)$ denotes the smallest thick subcategory containing $T$. 
\item $T$ is called  \emph{2-term} if $T^i = 0$ for all $i\not= 0,-1$. 
\end{enumerate}
\end{defi}
We denote by $\twosilt A$ (resp. $\twopsilt A$)
the set of isomorphism classes of basic 
2-term silting (resp. 2-term presilting) complexes of $\Kb(\proj A)$.
For an integer $i$ with $1\leq i\leq n$, 
we define 
$$\twopsilt^{i} (A):=\{T\in \twopsilt (A) \mid |T|=i \}.$$ 
Recall that a 2-term presilting complex $T$ is silting if and only if $|T|=|A|$ holds and hence $\twopsilt^n (A) =\twosilt (A)$  \cite[Proposition 3.3]{AIR}. 
Moreover, for any $U\in\twopsilt A$, there exists $T\in\twosilt A$ such that $U\in\add T$ (\cite[Theorem 2.10]{AIR}).

Moreover, for $T,U\in\twosilt A$, we write $T\ge U$ if $\Hom_{\Kb(\proj A)}(T,U[\ell])=0$ holds for all positive integers $\ell$. Then $(\silt A,\ge)$ and hence $(\twosilt A,\ge)$ is a partially ordered set \cite{AI}.

Then we recall an important connection between torsion classes, 2-term silting complexes and semibricks as follows.

\begin{theorem}\cite{AIR,A1}
\label{poset iso}
We have a  bijection
$$\T(-):\fLsbrick A\longrightarrow\ftors A.$$
Moreover, we have a poset isomorphism
$$\Fac(H^0(-)):\twosilt A\longrightarrow\ftors A.$$
\end{theorem}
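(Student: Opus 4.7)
The plan is to prove the two bijections in turn, invoking the Ingalls--Thomas--Marks--Stovicek correspondence for the first, and the Adachi--Iyama--Reiten support $\tau$-tilting theory for the second.

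First, to invert $\T(-)\colon \fLsbrick A \to \ftors A$, I would construct a map in the opposite direction as follows. Given $\TT \in \ftors A$, form the full subcategory of $\TT$ consisting of objects $X$ such that every morphism $Y \to X$ with $Y \in \TT$ has its kernel in $\TT$. A direct verification shows this is a wide subcategory of $\mod A$, and its set of simple objects $\sS(\TT)$ is a semibrick. The two equalities to check are (i) $\T(\sS(\TT)) = \TT$, which follows because a canonical Ext-projective generator of the functorially finite class $\TT$ admits a filtration whose factors lie in $\Filt \sS(\TT)$; and (ii) $\sS(\T(\sS)) = \sS$ for $\sS \in \fLsbrick A$, by identifying the wide subcategory attached to $\T(\sS)$ with $\Filt \sS$. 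Left-finiteness is precisely the hypothesis that ensures the maps land in the functorially finite portion.

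For the second statement, I would factor $\Fac \circ H^0$ through $\sttilt A$, the set of isoclasses of basic support $\tau$-tilting modules. By the main theorem of \cite{AIR}, $H^0\colon \twosilt A \to \sttilt A$ is bijective, and $\sttilt A \to \ftors A$ via $M \mapsto \Fac M$ is bijective, so the composition is bijective. For the poset structure, the relation $T \ge U$ in $\twosilt A$ means $\Hom_{\Kb(\proj A)}(T, U[1]) = 0$; unwinding the two-term triangles that compute $H^0(T)$ and $H^0(U)$ translates this vanishing into the inclusion $\Fac H^0(U) \subseteq \Fac H^0(T)$. The converse uses that silting mutation and torsion-class mutation correspond under $\Fac \circ H^0$: each cover $T \covers T'$ in $\twosilt A$ matches a cover of torsion classes, so covers are preserved in both directions and the bijection is a poset isomorphism.

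The principal obstacle is confirming that the wide subcategory attached to $\TT$ has enough simple objects to recover $\TT$ under $\T(-)$; this rests on the functorial-finiteness hypothesis, which supplies a Bongartz-type Ext-projective generator whose brick composition factors inside the wide subcategory are exactly $\sS(\TT)$. Once this is secured, both bijections and their mutual compatibility follow from the cited works \cite{AIR,A1}, and the theorem follows by composition.
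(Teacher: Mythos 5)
The paper cites Theorem~\ref{poset iso} from \cite{AIR,A1} without proof, so there is no in-text argument to compare against. Your sketch identifies the right ingredients: the Ingalls--Thomas/Marks--Stovicek wide subcategory inside a torsion class for the first bijection, and factoring $\Fac\circ H^0$ through $\sttilt A$ for the second. Two concrete steps would fail as written, though.

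Your assertion that the Ext-projective generator of $\TT$ is filtered by objects of $\Filt\sS(\TT)$ is false. For $A$ the path algebra of $1\to 2$ and $\TT=\Fac P_1$, one computes that the Ingalls--Thomas wide subcategory inside $\TT$ is $\add P_1$, so $\sS(\TT)=\{P_1\}$ and $\Filt\sS(\TT)=\add P_1$; every module filtered by $\Filt\sS(\TT)$ therefore has even total dimension, yet the Ext-projective generator $P_1\oplus S_1$ of $\TT$ has dimension $3$. The argument requires filtration factors lying in $\Fac\sS(\TT)$, not $\Filt\sS(\TT)$, because $\T(\sS)=\Filt(\Fac\sS)$ is in general strictly larger than $\Filt\sS$; this is the form in which the argument actually appears in \cite{A1}.

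Second, Theorem~\ref{poset iso} carries no $g$-finiteness hypothesis, so $\twosilt A$ and $\ftors A$ may be infinite, and an infinite partial order is not determined by its covering relation. Hence observing that covers correspond under $\Fac\circ H^0$ does not by itself establish a poset isomorphism. The AIR argument proves $T\ge U$ if and only if $\Fac H^0(T)\supseteq\Fac H^0(U)$ directly for all pairs, via the characterization of inclusion of functorially finite torsion classes through vanishing of $\Hom(H^0(U),\tau H^0(T))$ together with vanishing of $\Hom$-spaces out of the projective part of $T$, with no appeal to mutation.
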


\begin{theorem}\cite{Ri,A1}
\label{bij sbrick wide}
We have bijections 
$$\xymatrix@C20pt@R10pt{\sbrick A\ar@<0.5ex>[rr]^{\Filt(-)}&
&\ar@<0.5ex>[ll]^{\simp(-)} \wide A,\\
&&\\
\fLsbrick A\ar@<0.5ex>[rr]^{\Filt(-)}&
&\ar@<0.5ex>[ll]^{\simp(-)} \fLwide A,\\
&&\\
\brick A\ar@<0.5ex>[rr]^{\Filt(-)}&
&\ar@<0.5ex>[ll]^{\simp(-)} \swide A,}$$ 
where $\simp(-)$ takes the simple objects.
\end{theorem}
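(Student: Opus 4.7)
The plan is to establish the top bijection $\sbrick A \leftrightarrow \wide A$ first, and then obtain the other two by restricting to the appropriate subclasses.

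\textbf{Well-definedness of $\simp(-)$.} For $\W \in \wide A$, closure under kernels, cokernels and extensions makes $\W$ an exact abelian subcategory of $\mod A$. Since $\mod A$ is $\Hom$-finite and artinian, $\W$ inherits the structure of a length category; by Schur's lemma each simple object of $\W$ has a division endomorphism ring, and $\Hom$ vanishes between non-isomorphic simples. Hence $\simp(\W) \in \sbrick A$.

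\textbf{Well-definedness of $\Filt(-)$.} Given $\sS \in \sbrick A$, closure of $\Filt(\sS)$ under extensions is immediate. The hard part is closure under kernels and cokernels. I would proceed by induction on the length of a $\sS$-filtration, leveraging the key lemma that for any $M \in \Filt(\sS)$ and any $S \in \sS$, every nonzero morphism $S \to M$ is injective (since $S$ is a brick and the semibrick Hom-orthogonality forces morphisms to respect the filtration), and dually for morphisms $M \to S$. Given $f \colon X \to Y$ in $\Filt(\sS)$, one picks a short exact sequence $0 \to X' \to X \to S \to 0$ peeling off a top filtration factor $S \in \sS$; a diagram chase using the snake lemma together with the inductive hypothesis produces the kernel and cokernel of $f$ inside $\Filt(\sS)$.

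\textbf{Mutual inverse property.} The inductive analysis above also shows that the simple objects of the abelian category $\Filt(\sS)$ are exactly the members of $\sS$, so $\simp \circ \Filt = \id$. Conversely, for $\W \in \wide A$, every object admits a composition series with factors in $\simp(\W)$, giving $\W \subseteq \Filt(\simp \W)$; the reverse inclusion holds since $\W$ is extension-closed. Thus $\Filt \circ \simp = \id$.

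\textbf{Restrictions.} For the middle bijection, observe that $\T(\sS) = \T(\Filt(\sS))$, so $\sS \in \fLsbrick A$ if and only if the smallest torsion class containing $\Filt(\sS)$ is functorially finite, which is the defining condition for $\Filt(\sS) \in \fLwide A$. For the bottom bijection, a brick is precisely a singleton semibrick, and $\Filt$ of a singleton is a wide subcategory with a unique simple, hence single; conversely, the unique simple of a single wide subcategory is automatically a brick. The main obstacle is the kernel/cokernel closure step, as it requires a careful filtration-peeling argument that essentially uses the semibrick hypothesis; this is the content of Ringel's theorem \cite{Ri}, with the left-finite refinement coming from \cite{A1}.
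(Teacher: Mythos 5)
The paper states this theorem as a citation of Ringel \cite{Ri} and Asai \cite{A1} without giving a proof, so there is no in-paper argument to compare against; your sketch is a correct reconstruction of the standard one. The Schur-lemma argument for well-definedness of $\simp(-)$, the filtration-peeling induction for well-definedness of $\Filt(-)$ (which is the substance of Ringel's theorem), the identity $\T(\sS) = \T(\Filt(\sS))$ reducing the middle bijection to the top one (Asai's refinement), and the singleton observation for the bottom row are all exactly right. The key lemma you invoke is also correct, though stated somewhat loosely: a nonzero $f\colon S\to M$ with $S\in\sS$, $M\in\Filt(\sS)$ is injective because choosing the smallest filtration layer $M_i$ containing $\Image f$ and projecting to $M_i/M_{i-1}\in\add\sS$ yields a nonzero map $S\to S'$ for some $S'\in\sS$, which by Hom-orthogonality and the brick property is an isomorphism, forcing $S\to\Image f$ to be injective.
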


\begin{defi}\label{g-finite}
Let $A$ be a finite dimensional algebra.
We call $A$  \emph{$g$-finite} (or \emph{$\tau$-tilting finite})
if $|\twosilt A|<\infty$ (or equivalently, 
$|\ftors A|=|\fLsbrick A|<\infty$).
\end{defi}

In this paper, we always consider $g$-finite algebras. 
The following theorem gives a characterization of  $g$-finiteness.

\begin{thm}\label{finite brick}\cite{DIJ,A1}
Let $A$ be a finite dimensional algebra. Then $A$ is $g$-finite if and only if $\tors A=\ftors A$ if and only if $\sbrick A=\fLsbrick A.$
\end{thm}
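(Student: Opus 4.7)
The plan is to deduce both equivalences from the Demonet--Iyama--Jasso finiteness criterion together with the semibrick/torsion-class bijections already recorded in Theorems \ref{poset iso} and \ref{bij sbrick wide}.

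For $(1)\Leftrightarrow(2)$, by Theorem \ref{poset iso} the poset isomorphism $\Fac\circ H^0:\twosilt A\to\ftors A$ gives $|\twosilt A|=|\ftors A|$, so $g$-finiteness is the same as $|\ftors A|<\infty$. If $\tors A=\ftors A$, then the DIJ lattice-theoretic argument (any infinite antichain or chain in the complete lattice $\tors A$ produces a torsion class failing functorial finiteness) forces $|\tors A|<\infty$. Conversely, if $|\ftors A|<\infty$, then for any $\T\in\tors A$ I would consider the sandwich $\T^+:=\bigcap\{\UU\in\ftors A\mid \UU\supseteq\T\}$ and $\T^-:=\bigvee\{\UU\in\ftors A\mid \UU\subseteq\T\}$; both lie in $\ftors A$ by finiteness, and a chain argument (following DIJ) in the interval $[\T^-,\T^+]$ yields $\T^+=\T=\T^-$, so $\T\in\ftors A$.

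For $(2)\Leftrightarrow(3)$, I would combine the bijection $\T(-):\fLsbrick A\to\ftors A$ from Theorem \ref{poset iso} with Asai's extension to a bijection $\sbrick A\leftrightarrow\tors A$ in the $g$-finite case. Singleton semibricks correspond precisely to elements of $\brick A$, with $\T(\{B\})=\Fac B$. Under these identifications, the equality $\tors A=\ftors A$ translates to $\sbrick A=\fLsbrick A$. Restricting to singleton semibricks gives $\brick A\subseteq\fLsbrick A$ (every brick is left finite, after identifying $B$ with $\{B\}$); conversely, starting from $\brick A=\fLsbrick A$ and using Theorem \ref{bij sbrick wide} to recover every left-finite semibrick as $\simp\W$ for some $\W\in\fLwide A$, one concludes every semibrick is left finite and hence $\tors A=\ftors A$.

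The main obstacle is the DIJ implication $\tors A=\ftors A\Rightarrow|\tors A|<\infty$, whose proof relies on the deep lattice-theoretic reduction argument of the original paper; the other implications are direct translations via the bijections already at hand. A secondary subtlety is the proper reading of the equality $\brick A=\fLsbrick A$, which must be interpreted via the singleton inclusion $\brick A\hookrightarrow\sbrick A$ sending $B$ to $\{B\}$.
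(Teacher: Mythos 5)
The paper states this theorem purely as a citation of \cite{DIJ,A1} and gives no proof of its own, so there is no internal argument to compare against; your proposal is a reconstruction, and it has two concrete gaps.

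In the direction $|\ftors A|<\infty\Rightarrow\tors A=\ftors A$, the sandwich step asserts that $\T^+:=\bigcap\{\UU\in\ftors A\mid\UU\supseteq\T\}$ and $\T^-:=\bigvee\{\UU\in\ftors A\mid\UU\subseteq\T\}$ lie in $\ftors A$ ``by finiteness.'' This does not follow: finiteness of $\ftors A$ as a set does not make it closed under the meet and join of $\tors A$, and closure of $\ftors A$ under these lattice operations is itself one of the equivalent faces of $\tau$-tilting finiteness, so the step is essentially circular. The actual argument in \cite{DIJ} for this implication does not take a sandwich route; it runs through bricks, showing that once $\brick A$ is finite every torsion class $\T$ is the top of a finite directed system of the $\T(M)$, $M\in\T$, and hence equals one of them. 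For $(2)\Leftrightarrow(3)$ you invoke ``Asai's extension to a bijection $\sbrick A\leftrightarrow\tors A$ in the $g$-finite case'' --- but $g$-finiteness is precisely the condition being characterized, so invoking a bijection available only under that hypothesis is again circular. You are right to flag that $\brick A=\fLsbrick A$ is a type mismatch as written, since $\fLsbrick A$ contains non-singleton semibricks and no literal equality can hold; the intended reading is that every brick is left finite (Asai's set of left-finite bricks, not left-finite semibricks). The non-circular route for $(2)\Leftrightarrow(3)$ is Asai's bijection between left-finite bricks and join-irreducible functorially finite torsion classes, combined with DIJ's characterization via finiteness of $\brick A$, rather than a wholesale $\sbrick A\leftrightarrow\tors A$ bijection.
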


Moreover, we recall the following result, which is fundamental in this paper.

\begin{thm}\label{lattice}\cite{DIRRT}
Let $A$ be a finite dimensional algebra and assume that $A$ is $g$-finite. Then $\tors A=\ftors A$ is lattice and semidistributive.
\end{thm}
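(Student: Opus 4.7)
The statement has two parts: that $\tors A$ is a lattice, and that it is semidistributive. I would handle them separately.

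For the lattice structure, I would note that for any family $\{\T_i\}_{i \in I}$ of torsion classes, the intersection $\bigcap_i \T_i$ is again closed under extensions and quotient modules, so it is a torsion class. Thus arbitrary meets exist. Since $\mod A$ itself is a torsion class containing each $\T_i$, the collection of torsion classes containing $\bigcup_i \T_i$ is nonempty, and its intersection provides the join $\bigvee_i \T_i$. Hence $\tors A$ is a complete lattice regardless of any finiteness assumption. Under the $g$-finite hypothesis, Theorem \ref{finite brick} gives $\tors A = \ftors A$, and Theorem \ref{poset iso} identifies this with $\twosilt A$, which is finite; in particular $\tors A$ is a finite lattice.

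For semidistributivity, the approach is to transfer the problem to $\twosilt A$ via Theorem \ref{poset iso} and exploit $\tau$-tilting mutation. The central combinatorial tool is a canonical labeling of the Hasse quiver of $\tors A$ by bricks: for every cover $\T \covers \U$, $\tau$-tilting theory produces a unique brick $B(\T,\U) \in \brick A$, arising as the unique simple object of the wide subcategory of $\mod A$ attached to the cover, equivalently, obtained from the indecomposable summand exchanged in the corresponding mutation of support $\tau$-tilting modules. The key structural fact is that the distinct bricks labeling covers out of, or into, a fixed torsion class form a semibrick. To prove condition (i), $a \wedge b = a \wedge c \Rightarrow a \wedge b = a \wedge (b \vee c)$, I would reduce inductively on Hasse path length to the case where $b$ and $c$ each cover $a \wedge b$; the two covers are then labeled by distinct $\Hom$-orthogonal bricks, and the local polygon structure of the Hasse quiver (supplied by mutation of 2-term silting complexes) forces $a \wedge (b \vee c) = a \wedge b$. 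Condition (ii) is handled dually by passing to $\mod A^{\op}$, interchanging torsion classes with torsion-free classes and swapping the two conditions.

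The main obstacle is ensuring that the brick labeling behaves coherently under iterated mutation, in particular that $a \wedge (b \vee c)$ is actually controlled by the covers out of $a \wedge b$. This is exactly where $\tau$-tilting reduction enters: one passes to the $\tau$-perpendicular subcategory associated to $a \wedge b$, where the interval $[a \wedge b, \mod A]$ of $\tors A$ appears as the full torsion-class lattice of a smaller $g$-finite algebra, allowing the semidistributive identity to be deduced inductively on $|A|$. Verifying that this reduction is compatible with meets and joins inherited from the ambient lattice, and that the polygon at $(a \wedge b, b, c, b \vee c)$ really exists in the prescribed form, is the technical heart of the argument and the main reason the full proof in \cite{DIRRT} is involved.
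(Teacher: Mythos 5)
The paper gives no proof of its own — it simply cites \cite{DIRRT} — so the comparison is with the cited argument. Your treatment of the lattice claim is correct and standard: arbitrary intersections of torsion classes are torsion classes, $\mod A$ is the top element, and $g$-finiteness (via Theorems \ref{finite brick} and \ref{poset iso}) makes the lattice finite.

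For semidistributivity, your handling of the \emph{cover} case is actually sound, and you correctly identify the relevant geometry: if $b,c$ both cover $x:=a\wedge b$, then the corresponding silting complexes share a corank-$2$ presilting summand $U$, the interval $[x,b\vee c]$ equals the polygon $\twosilt_U A$ of Corollary \ref{cor rank2} (using that joins stay inside this interval, Proposition \ref{max in n-2}), and the only covers of $x$ inside that polygon are $b$ and $c$, which forces $a\wedge(b\vee c)=x$. The genuine gap is the claimed reduction from arbitrary $b,c$ to this cover case. If you replace $b,c$ by covers $b',c'$ of $x$ with $b'\le b$, $c'\le c$, the polygon argument gives $a\wedge(b'\vee c')=x$; but $b'\vee c'\le b\vee c$ only yields $x\le a\wedge(b\vee c)$, which is the trivial inclusion you already had for free. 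The nontrivial inclusion $a\wedge(b\vee c)\le x$ is not reached by that induction, and no alternative descent is supplied. You flag this as "the technical heart," but as written the reduction does not close. It is also worth noting that the argument in \cite{DIRRT} is not this polygonal reduction at all: they prove complete semidistributivity for \emph{arbitrary} finite-dimensional algebras, where $\tau$-tilting reduction and finiteness of the lattice are unavailable, by a direct module-theoretic characterization of joins of torsion classes together with the brick labeling of Hasse arrows; the polygonal route you sketch is a $g$-finite shortcut, but the missing step — passing from covers to the general semidistributive law — is precisely where the real content lies and would itself need an argument of comparable depth.
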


\subsection{g-fans}
 
Recall that, for an exact category (resp. a triangulated category) $\CC$, the Grothendieck group 
$K_0(\CC)$ is the quotient group of the free abelian group on the set of isomorphism classes $[X]$ of $\CC$
by the relations $[X]-[Y]+[Z]=0$ for all  short exact sequences $0 \to X \to Y \to Z \to 0$
(resp. all triangles $X \to Y \to Z \to X[1]$) in $\CC$. 

Let $A$ be a finite dimensional algebra and assume that $A$ is $g$-finite. 
Let $P_1,\ldots,P_n$ (resp. $S_1,\ldots,S_n$) be all the non-isomorphic indecomposable projective $A$-modules (resp. all the non-isomorphic simple $A$-modules). 
Then the $K_0(\proj A)$ (resp. $K_0(\mod A)$) 
is a free abelian group of rank $n$ having a $\mathbb{Z}$-basis $[P_1],\ldots,[P_n]$ (resp. $[S_1],\ldots,[S_n]$) and it is isomorphic
to $K_0(\Kb(\proj A))$ (resp. $K_0(\Db(\mod A)$). 
Let 
$K_0(\proj A)_\R:=K_0(\proj A)\otimes_{\mathbb{Z}}\R\cong \R^{|A|}.$

\begin{defi}\label{g-fan}
For $T=T_1\oplus\cdots\oplus T_j\in\twopsilt A$ with indecomposable $T_i$, we define a convex polyhedral cone 
\begin{eqnarray*}
C(T) := \{\sum_{i=1}^ja_i[T_i]\mid a_1,\ldots,a_j\ge 0\}\subset K_0(\proj A)_\R.
\end{eqnarray*}
We call the set
\[\Sigma(A):=\{C(T)\mid T\in\twopsilt A\}\]
of cones the \emph{$g$-fan} of $A$.
Moreover, we define 
$$\C(A) := \{C(T)\ |\ T\in\twosilt A\}$$
and the elements of $\C(A)$ is called \emph{chambers} of $\Sigma(A)$.
\end{defi}

Via bijection $\twosilt A\to \C(A)$, $T\mapsto C(T)$, we regard $\C(A)$ as a poset. 
Thus, we have poset isomorphisms $\twosilt A$, $\tors A$ and $\C(A)$ and hence $\Hasse(\twosilt A)\cong\Hasse(\tors A)\cong\Hasse(\C(A))$. 
To specify this poset structure, without particular reference to 
which ground set is used to define it, 
we simply write $\Hasse A:=\Hasse(\twosilt A)$ and  $\jirr(A):=\jirr(\twosilt A)$.

We recall the basic properties of $\Sigma(A).$


\begin{prop}\cite{DIJ}\label{property of g-cone}
Let $A$ be a finite dimensional algebra with $n:=|A|$.
\begin{itemize}
\item[(1)] $\Sigma(A)$ is a nonsingular fan in $K_0(\proj A)_\R$.
\item[(2)] Any cone in $\Sigma(A)$ is a face of a cone of dimension $n$.
\item[(3)] Any cone in $\Sigma(A)$ of dimension $n-1$ is a face of precisely the two cones of dimension $n$.
\item[(4)] $A$ is $g$-finite if and only if $\Sigma(A)$ is complete.
\item[(5)]  $\Sigma(A)$ is sign-coherent, that is, for each $\sigma\in\Sigma(A)$, 
there exists $\epsilon_1,\ldots,\epsilon_n\in\{1,-1\}$ such that $\sigma\subseteq \{\sum_{i=1}^n\epsilon_ia_i[P_i]\mid a_1,\ldots,a_n\ge 0\}$.
\end{itemize}
\end{prop}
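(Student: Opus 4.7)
The plan is to verify each of the four parts separately, leveraging the structural theory of 2-term silting complexes from \cite{AIR} and the characterization of $g$-finiteness recalled earlier. For (1), the key inputs are: (a) for any basic $T=T_1\oplus\cdots\oplus T_n\in\twosilt A$ with indecomposable $T_i$, the $g$-vectors $[T_1],\ldots,[T_n]$ form a $\mathbb{Z}$-basis of $K_0(\proj A)$, since $T$ generates $\Kb(\proj A)$ as a thick subcategory and $|T|=n$; and (b) the indecomposable summands of any $U\in\twopsilt A$ have sign-coherent $g$-vectors. Input (a) guarantees that each top-dimensional cone $C(T)$ is nonsingular simplicial. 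To verify the fan axioms, I would show that for $T,T'\in\twosilt A$ the intersection $C(T)\cap C(T')$ equals $C(U)$ where $U$ is the maximal common direct summand of $T$ and $T'$; one inclusion is obvious, and the other uses that a nonzero $x\in C(T)\cap C(T')$ has nonnegative coordinates in both $\mathbb{Z}$-bases, while by sign-coherence the indecomposable summands of $T'$ outside $\add T$ have $g$-vectors with a negative coordinate in the $T$-basis, forcing $x$ to be supported on $\add T\cap\add T'$.

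Parts (2) and (3) are then translations of known statements on silting completion and mutation. Part (2) is immediate from the completability of any $U\in\twopsilt A$ to some $T\in\twosilt A$ with $U\in\add T$ (Theorem 2.10 of \cite{AIR}), so that $C(U)$ is a face of the top-dimensional cone $C(T)$. Part (3) is the geometric shadow of 2-term silting mutation: an almost complete 2-term silting complex $U$ with $|U|=n-1$ admits exactly two silting completions $T$ and $T^{*}$, and correspondingly $C(U)$ is an $(n-1)$-dimensional face of exactly the two top-dimensional cones $C(T)$ and $C(T^{*})$.

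For (4), the forward direction ($g$-finite $\Rightarrow$ completeness of $\Sigma(A)$) uses stability theory: for any $\theta\in K_0(\proj A)_\R$, the $\theta$-semistable subcategory corresponds via King to a wide subcategory whose smallest containing torsion class is functorially finite by Theorem \ref{finite brick}, and the associated $T\in\twosilt A$ satisfies $\theta\in C(T)$, so the cones in $\Sigma(A)$ cover $K_0(\proj A)_\R$. For the reverse, completeness of the nonsingular simplicial rational fan $\Sigma(A)$ in $K_0(\proj A)_\R\cong\R^{n}$, combined with the local finiteness built into the definition of a fan, forces only finitely many top-dimensional cones via compactness of the unit sphere $S^{n-1}$. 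The main obstacle will be the fan axiom in part (1), namely that $C(T)\cap C(T')$ is a common face; this requires a careful use of sign-coherence and the uniqueness of expansions in the two $\mathbb{Z}$-bases, and is the most delicate interplay between the algebra of 2-term silting complexes and the convex geometry of their $g$-vectors.
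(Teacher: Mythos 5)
The paper offers no proof of its own for this proposition; it is cited to \cite{DIJ}, so your sketch is being measured against the arguments of Demonet--Iyama--Jasso (and of Asai in \cite{A2}) rather than against anything internal to the paper. Parts (2) and (3) of your sketch are fine: they are indeed direct translations of 2-term silting completion and the two-completions statement for almost complete presilting complexes. The problems are in (1) and (4).

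For (1), the nonsingular-simplicial part is unproblematic, but the fan axiom is not established by the sign-coherence argument as written, and you flag this yourself. The statement you invoke --- that every indecomposable summand of $T'$ outside $\add T$ has some negative coordinate in the $T$-basis --- even if granted, does not force a nonzero $x\in C(T)\cap C(T')$ to be supported on $\add T\cap\add T'$: a single negative coordinate in one summand can be compensated by positive contributions from the summands lying in $\add T$, so the coordinate-wise inequality you need does not follow term-by-term. The route actually taken in \cite{DIJ} (and \cite{A2}) is different: one shows that the relative interiors of distinct cones $C(U)$, $U\in\twopsilt A$, are disjoint, by identifying the relative interior of $C(U)$ with a TF-equivalence class of stability parameters $\theta$ (via the torsion pair $(\overline{\T}_\theta,\FF_\theta)$ and $(\T_\theta,\overline{\FF}_\theta)$), and the fan axiom is then deduced. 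That is the missing idea.

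For (4), the forward direction as you state it is incorrect. You pass from $\theta$ to the $\theta$-semistable wide subcategory $\W(\theta)$, take the smallest torsion class containing it, and claim the corresponding $T\in\twosilt A$ has $\theta\in C(T)$. Test this on a generic $\theta$ in the interior of a chamber: then $\W(\theta)=0$, the smallest torsion class containing it is $0=\Fac H^0(A[1])$, and $\theta\notin C(A[1])$. The object that actually locates $\theta$ in the fan is not $\W(\theta)$ but the pair of torsion classes $\T_\theta\subset\overline{\T}_\theta$; under $g$-finiteness both are functorially finite (Theorem \ref{finite brick}), hence of the form $\Fac H^0(T)$, and $\theta$ lies in the corresponding face of the fan. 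The reverse direction also leans on ``local finiteness built into the definition of a fan,'' which is not part of the definition used here; the finiteness of the complete fan is itself part of what must be argued in \cite{DIJ}, not an axiom one can invoke.
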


For $1\leq i\leq n$, 
let $H_i$ be the hyperplane in $K_0(\proj A)_\R$ which is orthogonal to $[P_i]$ and we call it the \emph{basic hyperplane}. By Proposition \ref{property of g-cone}, if $A$ is $g$-finite, then any  basic hyperplane is contained in the boundaries of the cones of $\Sigma(A)$.

For $L\in\twopsilt^{n-1}A$, we define 
$$H(L) :=\span(C(L)),$$
that is, $H(L)$ is the hyperplane containing $C(L)$. 
We remark that a $g$-fan is not necessary a hyperplane arrangement, 
so that $H(L)$ ($L\in\twopsilt^{n-1}A$) is not necessary contained in the boundaries of the cones of $\Sigma(A)$. 
For example, 
for the quivier $Q:=(1\to 2)$, let $A:=kQ$ be the path algebra of $Q$. Then  
the $g$-fan of $A$ is given as follows:
\[{\begin{xy}
0;<3pt,0pt>:<0pt,3pt>::
(5,10)="0",
(-5,10)="1",
(0,0)*{\bullet},
(13,0)*{[P_1]},
(0,12)*{[P_2]},
(0,0)="2",
(10,0)="3",
(-10,0)="4",
(0,10)="5",
(0,-10)="6",
(10,-10)="7",
\ar@{-}"2";"3",
\ar@{-}"3";"4",
\ar@{-}"5";"6",
\ar@{-}"2";"7",
\end{xy}}\]

The aim of this paper is to extend the theory of shards, which was originally introduced by Reading to hyperplane arrangements \cite{Re3,Re4}, to $g$-fans and explain a relationship with several important objects of representation theory.

We also recall the following result. 

\begin{prop}\cite{DIJ}
 Let $L\in\twopsilt^{n-1}A$.
Then the hyperplane $H(L)$ divides $K_0(\proj A)_\R$ into a half-space containing $C(A)$ and a half-space containing $C(A[1])$.
\end{prop}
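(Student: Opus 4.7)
The plan is to realise a normal vector to $H(L)$ as a $c$-vector of a 2-term silting complex and then apply sign-coherence. Write $L=L_1\oplus\cdots\oplus L_{n-1}$ with each $L_i$ indecomposable. By the completion theorem of Adachi--Iyama--Reiten (Theorem 2.10 of \cite{AIR}), there exist exactly two non-isomorphic indecomposable complements $X_1,X_2$ such that $T_1:=L\oplus X_1$ and $T_2:=L\oplus X_2$ belong to $\twosilt A$. Since $T_1$ is silting, the $g$-vectors $[L_1],\dots,[L_{n-1}],[X_1]$ form a $\mathbb{Z}$-basis of $K_0(\proj A)$, and by definition $H(L)=\span([L_1],\dots,[L_{n-1}])$.

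Next, consider the perfect pairing $\langle -,-\rangle\colon K_0(\mod A)_\mathbb{R}\times K_0(\proj A)_\mathbb{R}\to\mathbb{R}$ with $\langle [S_i],[P_j]\rangle=\delta_{ij}$, so that $[S_1],\dots,[S_n]$ and $[P_1],\dots,[P_n]$ are dual bases. Let $c\in K_0(\mod A)_\mathbb{R}$ be the unique vector satisfying $\langle c,[L_i]\rangle=0$ for each $i$ and $\langle c,[X_1]\rangle=1$; this is the $c$-vector of $X_1$ relative to $T_1$, and by construction $H(L)=\{v\mid \langle c,v\rangle=0\}$. The key representation-theoretic input I would invoke is the sign-coherence of $c$-vectors for 2-term silting complexes (cf.\ \cite{AIR,DIJ}): writing $c=\sum_{i=1}^n m_i[S_i]$, either all $m_i\ge 0$ or all $m_i\le 0$.

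After possibly reversing signs, assume $c=\sum_i m_i[S_i]$ with $m_i\ge 0$ for every $i$. Then $\langle c,[P_j]\rangle=m_j\ge 0$ for each $j$, and by linearity $\langle c,v\rangle\ge 0$ for every $v\in C(A)=\sum_j \mathbb{R}_{\ge 0}[P_j]$. Since $C(A[1])=-C(A)$, the opposite inequality $\langle c,v\rangle\le 0$ holds on $C(A[1])$. Thus the two closed half-spaces bounded by $H(L)$ contain $C(A)$ and $C(A[1])$ respectively, completing the argument. The only genuinely nontrivial step is the appeal to sign-coherence of $c$-vectors; everything else is linear algebra combined with the completion theorem.
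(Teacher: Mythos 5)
Your argument is correct and is essentially the standard derivation: the paper itself gives no proof but simply cites \cite{DIJ}, and the content of your argument --- realising the normal direction to $H(L)$ as the $c$-vector of the missing summand $X_1$ with respect to the completed silting complex $T_1$, and then invoking sign-coherence --- is precisely what underlies the statement there. Writing $H(L)=\ker\langle c,-\rangle$ with $c$ dual to $[X_1]$ is the right reduction, and once $c=\sum_i m_i[S_i]$ has entries of one sign the half-space conclusion for $C(A)$ and $C(A[1])=-C(A)$ is immediate linear algebra, as you say.

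Two small points worth tightening. First, Theorem~2.10 of \cite{AIR} gives only that a presilting complex can be completed to a silting complex; the statement that there are exactly two complements for an almost complete presilting complex is a separate result in \cite{AIR}. You only use existence of one complement, so this is harmless, but the citation is slightly off. Second, sign-coherence of $c$-vectors is not stated in \cite{AIR}; the cleanest source in exactly this setting is the brick-labelling description of walls as in \cite{DIJ} or \cite{BST}, which shows that the $c$-vector you construct is $\pm[B]$ for a brick $B$. Since $[B]$, being the class of a nonzero module, has non-negative entries in the simple basis and is nonzero, sign-coherence (and in fact the stronger statement that the interior of $C(A)$ lies in the open half-space) follows, giving exactly what you need.
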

We call the half-space containing $C(A)$ (resp. $C(A[1])$) the \emph{positive} half-space (resp. \emph{negative} half-space) defined by the hyperplane $H(L)$. Then we have the following important theorem.

\begin{thm}\cite{DIJ}\label{poset hyperplane} 
Let $M,N\in\twosilt A$ such that they are mutation of each other. The following are equivalent, 
where $L\in\twopsilt^{n-1}A$ such that $\add L=\add M\cap\add N$,  

\begin{itemize}
\item[(a)] $M\covers N$
\item[(b)] $C(M)$ belongs to the positive half-space defined by $H(L)$. 
\item[(c)] $C(N)$ belongs to the negative half-space defined by $H(L)$.
\end{itemize}
\end{thm}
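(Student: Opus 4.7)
The plan is to combine the geometric structure of $\Sigma(A)$ with the exchange triangle description of mutation of two-term silting complexes, producing the sign information needed to identify the covering relation with a specific half-space.

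First, I set up the mutation data. Since $M$ and $N$ are mutations of each other sharing $L$, I write $M=L\oplus X$ and $N=L\oplus Y$ with $X,Y$ indecomposable and non-isomorphic. By Proposition \ref{property of g-cone}(3), the $(n-1)$-dimensional face $C(L)$ is a face of exactly two $n$-dimensional cones of $\Sigma(A)$; any such cone must be of the form $C(L\oplus Z)$ for an indecomposable $Z\notin\add L$, so the two cones must be $C(M)$ and $C(N)$. Consequently $C(M)$ and $C(N)$ lie in opposite open half-spaces determined by $H(L)=\span(C(L))$, and in particular $[X]$ and $[Y]$ do too. This immediately yields the equivalence (b) $\Leftrightarrow$ (c).

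Next, for (a) $\Leftrightarrow$ (b), I invoke the exchange triangle. Assuming $M\covers N$, left mutation at $X$ produces a triangle
$$X\xrightarrow{f} L_0\to Y\to X[1]$$
in $\Kb(\proj A)$ with $f$ a minimal left $\add L$-approximation and $L_0\in\add L$. Passing to the Grothendieck group gives $[X]+[Y]=[L_0]$. Choose the linear functional $\varphi\in(K_0(\proj A)_\R)^*$ with $\ker\varphi=H(L)$, normalized so that $\varphi>0$ on the half-space containing $C(A)$. Since $L_0\in\add L\subset H(L)$, I obtain $\varphi([X])=-\varphi([Y])$, so the entire problem reduces to showing $\varphi([X])>0$ whenever $M\covers N$.

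The main technical obstacle is pinning down this sign. The cleanest way I see is to proceed by induction along a maximal chain in $\Hasse(\twosilt A)$ from $A$ down to $M$, using that at each covering step the same exchange-triangle calculation applies and that the base case is trivial: when $M=A$, the cone $C(M)=C(A)$ lies in its own positive half-space by the normalization of $\varphi$. Alternatively, one can reduce to a rank-$2$ computation by restricting to the "link" of $L$, where the two-term silting picture collapses to that of a rank-$2$ algebra with two chambers separated by $H(L)$; there the claim is a direct check that left mutation moves the chamber across $H(L)$ away from the source of the Hasse arrow. Either way, once the sign is fixed, (a) $\Leftrightarrow$ (b) follows, and combined with the first step the three conditions become equivalent.
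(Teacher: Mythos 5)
This statement is cited to \cite{DIJ} and the paper offers no proof, so I will evaluate your argument on its own merits.

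The setup and the equivalence (b) $\Leftrightarrow$ (c) are fine: since $\Sigma(A)$ is a fan and $C(L)$ is a facet of both simplicial cones $C(M)$ and $C(N)$, the two cones indeed lie in the two closed half-spaces cut out by $H(L)=\span(C(L))$. The exchange-triangle computation $[X]+[Y]=[L_0]$, giving $\varphi([X])=-\varphi([Y])$, is correct but redundant: it merely re-derives that $[X]$ and $[Y]$ lie on opposite sides, which is already forced by the fan structure. The actual content of (a) $\Leftrightarrow$ (b) is pinning down the sign of $\varphi([X])$, and that is exactly where the argument stops being a proof.

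Neither of your two proposed sign arguments closes the gap. For the induction ``along a maximal chain from $A$ down to $M$'': the base case $M=A$ is indeed trivial, but there is no inductive step. At each step $T_{i-1}\covers T_i$ the exchange triangle lives in $H(L_i)$, a different hyperplane, so it gives information about the position of $C(T_i)$ relative to $H(L_i)$, not relative to $H(L)$. One would need to show that the chain from $C(A)$ to $C(M)$ never crosses $H(L)$ (equivalently, that no intermediate shared facet $C(L_i)$ spans the same hyperplane as $C(L)$, or that crossings cancel), and nothing in the sketch addresses this; it is a genuine statement, since $\Sigma(A)$ is generally not a hyperplane arrangement and several distinct small walls can span $H(L)$. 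For the ``link of $L$'' reduction: $L$ has $n-1$ indecomposable summands, so passing to $\Sigma(A)/C(L)$ gives a \emph{rank-one} fan (two rays), in which $H(L)$ collapses to the origin and no half-space dichotomy survives. If instead one reduces at a codimension-$2$ face $\UU=C(U)$ with $U\in\add L$, one does get a rank-$2$ fan as in Corollary \ref{cor rank2}, but the positive half-space of the reduced hyperplane is characterized by containing $C(T_{\max})/\UU$, where $T_{\max}$ is the top of $\C(A,\UU)$ — not by containing (the image of) $C(A)$, which typically does not survive the reduction. One would then still need to show $C(T_{\max})$ lies on the $C(A)$-side of $H(L)$, which is essentially an instance of the very statement being proved. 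So the sign step, which is the whole point of the theorem, is not established.
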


\subsection{Reduction technique}

Let $U\in\twopsilt A$. Define $$\twosilt_U A:=\{T\in \twosilt A\ | \ U\in\add T\}.$$

For $\UU:=C(U)$, it is naturally identified with 
$$\C(A,\UU) :=\{\TT\in\C(A)  \mid \UU\textnormal{ is a face of }\TT\}=
\{C(T)\ |\ T\in\twosilt_U A \}.$$

For a fan $\Sigma$ in $K_0(\proj A)_\R\cong\mathbb{R}^n$ and $\sigma\in\Sigma$, we define the reduction of $\Sigma$ at $\sigma$ by
$$\Sigma/\sigma:=\{\pi(\tau)\ |\ \tau\in\Sigma,\ \sigma\subseteq\tau\},$$
where $\pi:\R^n\to\R^n/\R\sigma$ is a natural projection. Then $\Sigma/\sigma$ is a fan in $\R^n/\R\sigma$.

\begin{thm}\label{reduction}\cite{J,A2,AHIKM} 
Let $U\in\twopsilt A$ such that $|U|=i$ $(1\leq i\leq n)$.
\begin{itemize}
\item[(1)] There exists a finite dimensional algebra $B$ such that $|B|=n-i$ and a poset isomorphism 
$$\twosilt_U A\cong \twosilt B.$$

\item[(2)] The isomorphism $K_0(\proj A)_\R/K_0(\add U)_\R\cong K_0(\proj B)_\R$ gives an isomorphism of fans 
\[\Sigma(A)/C(U)\cong\Sigma(B).\]
\end{itemize}
\end{thm}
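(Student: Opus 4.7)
The plan is to invoke Jasso's silting/$\tau$-tilting reduction for part (1), and then to trace how $g$-vectors behave under this reduction in order to obtain part (2). Since the theorem is credited to \cite{J,A2}, the strategy is to identify the correct reduced algebra $B$ and then check that the natural projection on Grothendieck groups carries $g$-vectors to $g$-vectors.

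For part (1), the first step is to choose a Bongartz-type completion of $U$: since every $U \in \twopsilt A$ extends to a 2-term silting complex (recalled in the excerpt), pick $V$ with $T := U \oplus V \in \twosilt A$. Following Jasso, consider $C := \End_{\Kb(\proj A)}(T)$ and define $B := C/\langle e_U\rangle$, where $e_U \in C$ is the idempotent cutting out the summand $U$. Then $|B| = n - i$. The bijection $\twosilt_U A \to \twosilt B$ sends $U \oplus V'$ to the 2-term silting complex of $B$ corresponding to $V'$ under the silting-reduction functor. To see it is a poset isomorphism, I would check that mutation inside $\twosilt_U A$ (which never mutates the fixed summand $U$) matches mutation in $\twosilt B$, so the Hasse quivers coincide; since the order on $\twosilt$ is generated by these covering relations, the order isomorphism follows.

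For part (2), given $T' = U \oplus V' \in \twosilt_U A$, the cone $C(T')$ is the Minkowski sum $C(U) + C(V')$, so the projection $\pi : K_0(\proj A)_{\R} \to K_0(\proj A)_{\R}/K_0(\add U)_{\R}$ satisfies $\pi(C(T')) = \pi(C(V'))$. The key computation is to identify the image under $\pi$ of the $g$-vector of each indecomposable summand of $V'$ with the $g$-vector of its counterpart in $\twopsilt B$. The canonical isomorphism $K_0(\proj A)_{\R}/K_0(\add U)_{\R} \simeq K_0(\proj B)_{\R}$ is induced by sending the residue classes of the indecomposable projectives of $A$ outside $\add U$ to the corresponding indecomposable projectives of $B$, and the reduction functor sends a minimal projective presentation over $A$ (modulo $\add U$-summands) to a minimal projective presentation over $B$. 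Combining this with part (1), every cone of $\Sigma(A)$ containing $C(U)$ as a face projects to a cone of $\Sigma(B)$, and this correspondence is a face-preserving bijection of fans.

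The main obstacle is precisely this $g$-vector compatibility: while the bijection of sets of 2-term silting complexes is essentially formal once one has Jasso's reduction, verifying that the class in $K_0(\proj A)_{\R}/K_0(\add U)_{\R}$ of an indecomposable 2-term presilting summand equals the intrinsic $g$-vector computed over $B$ requires either an explicit analysis of minimal projective presentations under the reduction, or appeal to Asai's computation in \cite{A2}. Once this compatibility is in hand, the equality $\Sigma(A)/C(U) = \Sigma(B)$ of fans follows immediately.
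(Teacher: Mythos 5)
The paper does not prove this statement---it is imported verbatim from \cite{J,A2}---so the relevant question is whether your outline is a faithful reconstruction of what those sources do, and it essentially is: Jasso's silting/$\tau$-tilting reduction gives the poset isomorphism in (1), and Asai's analysis of $g$-vectors under this reduction gives the fan isomorphism in (2). Two points are worth sharpening. First, the completion $T = U \oplus V$ must be the \emph{Bongartz} completion of $U$, i.e.\ the maximum element of $\twosilt_U A$; the algebra $B = \End_{\Kb(\proj A)}(T)/\langle e_U\rangle$ genuinely depends on this choice, and an arbitrary 2-term silting complex containing $U$ as a summand would produce a different (typically non-isomorphic) algebra, breaking the bijection as you have set it up. You do say ``Bongartz-type completion,'' but the follow-up phrasing ``pick $V$ with $T := U \oplus V \in \twosilt A$'' reads as arbitrary, so this should be made explicit. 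Second, your observation that the bijection on $\twosilt_U A$ automatically upgrades to a poset isomorphism because both posets are generated by mutations that fix $U$ is correct, but it is really a lattice isomorphism (Proposition \ref{max in n-2} identifies $\twosilt_U A$ with an interval in the lattice $\tors A$), and this stronger structure is what Jasso actually proves and what later arguments in the paper rely on. The remaining obstacle you flag---matching $\pi([V_k])$ computed in $K_0(\proj A)_\R/K_0(\add U)_\R$ with the intrinsic $g$-vector of the reduced summand over $B$---is precisely the content of Asai's computation in \cite{A2}, and appealing to it rather than redoing the minimal-presentation bookkeeping is the right call.
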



As a consequence of Theorem \ref{reduction}, we have the following result. 

\begin{cor}\label{cor rank2}
Let $U\in\twopsilt^{n-2} A$.
There exists a finite dimensional algebra $B$ such that $|B|=2$ and  
$\twosilt_U A\cong \twosilt B.$ In particular, there exist 
$$T_{\max},T_{\min},T_1,T_2,\ldots,T_\ell,T_1',T_2',\ldots,T_m'\in\twosilt_U A$$ and $\Hasse(\twosilt_U A)$ is given as follows.
$$\xymatrix{&T_\ell \ar[r]&T_{\ell-1}\ar[r]&\cdots\ar[r]&T_2\ar[r]&T_1\ar[dr]&\\
T_{\max}\ar[ru]\ar[rd]&&&&&&T_{\min}\\
&T_m'\ar[r]&T_{m-1}'\ar[r]&\cdots\ar[r]&T_2'\ar[r]&T_1'\ar[ur]&
}$$

Furthermore, $\Sigma(A)/C(U)\cong\Sigma(B)$ has the following picture:
\[{\begin{xy}
0;<3pt,0pt>:<0pt,3pt>::
(5,10)="0",
(-5,10)="1",
(0,0)*{\bullet},
(0,0)="2",
(5,-10)="3",
(-5,-10)="4",
(10,7)="5",
(10,-4)="a",
(-10,5)="6",
(-8,-6)="b",
(-8,7)="7",
(8,2)*{\vdots},
(-8,2)*{\vdots},
\ar@{-}"2";"0",
\ar@{-}"2";"1",
\ar@{-}"2";"3",
\ar@{-}"2";"4",
\ar@{-}"2";"a",
\ar@{-}"2";"5",
\ar@{-}"2";"b",
\ar@{-}"2";"7",
\end{xy}}\]
where the top chamber is the maximum element and the bottom chamber is the minimum element, and these chambers are given by the same two hyperplanes. 
\end{cor}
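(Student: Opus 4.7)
The plan is to deduce the statement from Theorem \ref{reduction} together with a direct inspection of rank-two complete fans. Since $|U| = n-2$, Theorem \ref{reduction}(1) yields a finite dimensional algebra $B$ with $|B|=2$ and a poset isomorphism $\twosilt_U A \cong \twosilt B$, while part (2) gives the fan isomorphism $\Sigma(A)/C(U) \simeq \Sigma(B)$. Because $A$ is $g$-finite, $\twosilt_U A$ is finite, hence so is $\twosilt B$, so $B$ is itself $g$-finite; by Proposition \ref{property of g-cone}(4), the fan $\Sigma(B)$ is complete and nonsingular in $K_0(\proj B)_{\R} \cong \R^{2}$.

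Next I would unpack the shape of such a fan. A complete nonsingular fan in $\R^{2}$ is determined by its finite set of one-dimensional cones, and Proposition \ref{property of g-cone}(3) forces these to be arranged cyclically around the origin as $\rho_0,\rho_1,\ldots,\rho_{N-1}$, with the two-dimensional chambers $\sigma_k$ sitting between $\rho_k$ and $\rho_{k+1}$ (indices modulo $N$). Among the rays one recognises $[P^B_1], [P^B_2], -[P^B_1], -[P^B_2]$: since $C(B)$ is a $2$-dimensional chamber of the fan the first two must be adjacent in the cyclic order, and likewise the last two. Thus $T_{\max}=B$ and $T_{\min}=B[1]$ occupy two chambers lying on opposite arcs of the cycle, and the remaining chambers split into two arcs; renaming them as $T_\ell,\ldots,T_1$ on one side and $T_m',\ldots,T_1'$ on the other (in the order encountered when leaving $T_{\max}$), one has $T_{\max}$ adjacent to $T_\ell$ and $T_m'$, and $T_{\min}$ adjacent to $T_1$ and $T_1'$.

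Finally, the Hasse diagram is read off via Theorem \ref{poset hyperplane}. Covers in $(\twosilt B,\ge)$ correspond to mutations, hence to pairs of consecutive chambers $\sigma_k,\sigma_{k+1}$ in the cyclic arrangement; the direction of each cover is determined by which of the two chambers lies in the positive half-space of the line $H(L)$ through the shared ray. Because $C(B)=T_{\max}$ is a genuine $2$-dimensional chamber lying on one side of every such line, a direct sweep of the separating ray along either arc shows that the positive half-space is always the one containing the neighbour closer to $T_{\max}$ in the cyclic order. This yields the two linear chains
\[
T_{\max}\covers T_\ell\covers T_{\ell-1}\covers\cdots\covers T_1\covers T_{\min},\qquad T_{\max}\covers T_m'\covers\cdots\covers T_1'\covers T_{\min},
\]
which is the claimed Hasse picture, while the displayed picture of $\Sigma(A)/C(U)\simeq\Sigma(B)$ is simply the drawing of the cyclic arrangement. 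The only real obstacle is the geometric bookkeeping in this last step, namely verifying that the positive half-space stays on the ``$T_{\max}$ side'' as the separating ray sweeps each arc; but this is an elementary consequence of the fact that $C(B)$ is a bona-fide $2$-dimensional chamber of the fan, not crossed by any $H(L)$.
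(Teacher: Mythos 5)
Your proposal is correct and takes the route the paper intends: the paper offers no explicit proof, stating only that the corollary is a consequence of Theorem \ref{reduction}, and you have filled in exactly that deduction — applying Theorem \ref{reduction} to produce the rank-two algebra $B$ and the fan isomorphism, observing that a complete nonsingular fan in $\R^{2}$ is a cyclic arrangement of rays with $C(B)$ and $C(B[1])$ occupying opposite wedges, and then reading off the two-chain Hasse diagram via Theorem \ref{poset hyperplane}. The sweeping argument at the end is stated somewhat informally, but it is sound: for any separating ray $\rho$ on one arc, the rays between $\rho$ and the boundary of $C(B)$ all lie strictly on the $C(B)$-side of $\span(\rho)$, so the neighbouring chamber closer to $C(B)$ in the cyclic order is always the one in the positive half-space.
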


Let $U\in\twopsilt A$. Via the identification of $\twosilt A$ with $\tors A$, let $\T(U)_{\max}$ (resp. $\T(U)_{\min}$) be the maximum torsion class (resp. minimum torsion class) of $\twosilt_U A$. 
Then we have the following result. 

\begin{prop}\cite[Proposition 2.9]{AIR}\label{max in n-2}
Let $U\in\twopsilt A$. 
Then we have 
$$\T(U)_{\min}=\Fac(H^0(U))\ \ \textnormal{and}\ \ \T(U)_{\max}={}^{\perp}H^{-1}(\nu U).$$
\end{prop}


\section{Shards}
In the rest of this paper, let $A$ be a finite dimensional algebra which is $g$-finite (hence $\Sigma(A)$ is complete).
In this section, we define shards for $g$-fan $\Sigma(A)$ and study their fundamental properties. 
In the case of a hyperplane arrangement, the partial order of chambers is defined by the the set of hyperplanes which separates 
a chamber from the base chamber. 
However, we can not apply this argument to the $g$-fan (since it is not necessary a hyperplane arrangement) 
and hence many previous results in \cite{Re3,Re4} need to be shown in our setting. 
In particular, we will give a bijection between shards of $\Sigma(A)$ and join-irreducible elements of $\C(A)$. Moreover, we will provide a canonical join representation in terms of shards.

First we prepare some terminology.

\begin{defi}
We call an $(n-1)$-dimensional cone $\UU$ of $\Sigma(A)$ a \emph{small wall} if there exists $U\in\twopsilt^{n-1}A$ such that 
$\UU=C(U)$. 
Note that $\UU$ is a face of precisely the two cones of dimension $n$ by Proposition \ref{property of g-cone}.
\end{defi}

\begin{defi}
\begin{itemize}
\item[(1)] 
Take a small wall $\UU\in\Sigma(A)$ and let $H:=\span(\UU)$. 
The \emph{plate} $\PP$ containing $\UU$ is the maximal collection of small walls of $\Sigma(A)$ containing 
$\UU$ and contained in $H$ such that, for any two small walls $\FF,\FF'$ contained in $\PP$, there is a sequence of small walls 
$\FF=\FF_1,\ldots,\FF_k=\FF'$ with $\FF_i$ and $\FF_{i+1}$ intersecting in a cone of dimension $n-2$ for all $1\leq i\leq k-1$. 
We denote by $\mathbb{P}(A)$ the set of all plates of $\Sigma(A)$. 
\item[(2)] Let $\PP\in \mathbb{P}(A)$ be a plate. Let 
$\UU\in\C(A)$ be a small wall such that $\LL:=\UU\cap \PP$ is an $(n-2)$-dimensional cone. 
We say that $\UU$ cuts $\PP$ along $\LL$ if $\UU$ is a facet of maximum or minimum cones of $\C(A,\LL)$ and  
$\PP$ contains neither a facet of maximum nor minimum cones of $\C(A,\LL)$.

\item[(3)] 
For a plate $\PP\in \mathbb{P}(A)$, we remove all points that cut by small walls of $\Sigma(A)$, that is, consider $\PP\backslash\bigcup(\UU\cap\PP)$, where the union is taken over all small walls that cut $\PP$  
(Thus we could divide a plate into a small pieces of connected components). 
The closure of a connected component of $\PP\backslash\bigcup(\UU\cap\PP)$ is called a \emph{shard}. 
Thus, a shard consists of a union of small walls of $\Sigma(A)$ belonging to the same hyperplane. 
We denote by $\shard A$ the set of shards of $\Sigma(A)$.
\end{itemize}

\end{defi}

\begin{exam}\label{exam1}
\begin{itemize}
\item[(1)] Let $A$ be a rank 2 $g$-finite algebra. 
Then $\Sigma(A)$ is given as follows:  
\[{\begin{xy}
0;<3pt,0pt>:<0pt,3pt>::
(7,15)="0",
(-7,15)="1",
(0,0)*{\bullet},
(0,0)="2",
(7,-15)="3",
(-7,-15)="4",
(13,7)="5",
(13,-10)="a",
(-13,5)="6",
(-13,-4)="b",
(-13,7)="7",
(10,0)*{\vdots},
(-10,2)*{\vdots},
(0,13)*{C(A)},
(0,-16)*{C(A[1])},
(16,-11)*{C_1},
(16,8)*{C_{\ell}},
(-16,-5)*{C_1'},
(-16,9)*{C_m'},
\ar@{-}"2";"0",
\ar@{-}"2";"1",
\ar@{-}"2";"3",
\ar@{-}"2";"4",
\ar@{-}"2";"a",
\ar@{-}"2";"5",
\ar@{-}"2";"b",
\ar@{-}"2";"7",
\end{xy}}\]
where $C_1,\cdots,C_\ell,C_1'\cdots,C_m'$ are one-dimensional cones generated by an indecomposable presilting complex. 
Then the shards are illustrated as follows:

\[{\begin{xy}
0;<3pt,0pt>:<0pt,3pt>::
(7,15)="0",
(-7,15)="1",
(0,0)*{\bullet},
(0,0)="2",
(7,-15)="3",
(-7,-15)="4",
(13,7)="5",
(13,-10)="a",
(-13,5)="6",
(-13,-4)="b",
(-13,7)="7",
(10,0)*{\vdots},
(-10,2)*{\vdots},
(0,13)*{C(A)},
(0,-16)*{C(A[1])},
(16,-11)*{C_1},
(16,8)*{C_{\ell}},
(-16,-5)*{C_1'},
(-16,9)*{C_m'},
\ar@{-}"2";"0",
\ar@{-}"2";"1",
\ar@{-}"2";"3",
\ar@{-}"2";"4",
\ar@{-}(2,-1);"a",
\ar@{-}(2,1);"5",
\ar@{-}(-2,-1);"b",
\ar@{-}(-2,1);"7",
\end{xy}}\]
Namely, two lines intersecting at the origin bounding  $C(A)$ (and $C(A[1])$) are two shards, and $C_1,\cdots,C_\ell,C_1'\cdots,C_m'$ are distinct shards. Note that all shards contain the origin, but they do not continue through the origin. This is a natural analog of the original definition of shards \cite{Re3,Re4}. 
\item[(2)] Let $A$ be a preprojective algebra of Dynkin type. 
Then $\Sigma(A)$ is the Coxeter fan \cite[Theorem 8.4]{AHIKM} and hence it is a hyperplane arrangement. 
Thus, the shards of $\Sigma(A)$ are exactly the same as the original shards of \cite{Re3,Re4}. 
\item[(3)] Let $A$ be a finite dimensional algebra. Then a basic hyperplane is a shard. 
Indeed, a basic hyperplane is a plate by Proposition \ref{property of g-cone} (5). Moreover, for a basic hyperplane, consider 
a small wall $\UU$ in it and 
an $(n-2)$-dimensional cone $\LL=C(L)$ for some $\LL\in\twosilt^{n-2}A$ such that $\LL$ is a face of $\UU$. Then Theorem \ref{poset hyperplane} implies that $\UU$ is a 
facet of the maximum chamber of $\C(A,\LL)$ and hence it is not cut along $\LL$. Thus a basic hyperplane itself is a shard. 
\end{itemize}
\end{exam}

\begin{defi}
\begin{itemize}
\item[(1)] The unique hyperplane containing a shard $\Sigma$ is denoted by $H(\Sigma)$. 
\item[(2)] We call a chamber 
$\TT\in\C(A)$ an
\emph{upper chamber} of shard $\Sigma$ if  $\dim(\TT\cap\Sigma)=\dim(\Sigma)$ and $\TT$ belongs to the positive half-space defined by $H(\Sigma)$.
Or equivalently, an upper chamber of a shard $\Sigma$  has a facet contained in $\Sigma$ such that the chamber adjacent through that facet is lower in the poset $\C(A)$. 
Dually, we can define a \emph{lower chamber} of a shard $\Sigma$. 
We denote by $\Up(\Sigma)$ (resp. $\Lo(\Sigma)$) the set of upper chambers (resp. lower chambers) of $\Sigma$. 
\item[(3)]We say that a shard $\Sigma$ is a \emph{lower shard} of $\RR$ if $\RR$ is an upper chamber of $\Sigma$. 
\item[(4)]For a shard $\Sigma$, we denote the set of small walls in $\Sigma$ by $\sw(\Sigma)$, that is,  $$\sw(\Sigma)=\{C(T)\ |\ T\in\twopsilt^{n-1}A,\ C(T)\in\Sigma\}.$$
\end{itemize}
\end{defi}

The following lemma is an analog of \cite[Lemma 9-7.7]{Re4}.

\begin{lemm}\label{seq of poset}
Let $\Sigma\in\shard A$.
\begin{itemize}
\item[(1)] Let $\MM$ and $\NN$ be chambers of $\Up(\Sigma)$. 
Then there exists a sequence 
$\FF_0:=\MM\cap\Sigma$, $\FF_1,\cdots,\FF_{\ell-1},\FF_\ell=:\NN\cap\Sigma$ of small walls in $\sw(\Sigma)$ such that 
$\FF_{i-1}$ and $\FF_i$ are adjacent  $(1\leq i\leq \ell-1)$.

\item[(2)] For $\FF_i$ defined in (1), we denote by $\RR_i$ (resp. $\QQ_i$) the corresponding upper chamber in $\Up(\Sigma)$ (resp. lower chamber in $\Lo(\Sigma)$), that is, $\RR_i\cap\Sigma=\FF_i$ 
(resp. $\QQ_i\cap\Sigma=\FF_i$)
and $\RR_i$ (resp. $\QQ_i$)
belongs to the positive (resp. the negative) 
half-space defined by 
$H(\Sigma)$. 
Then we have $\RR_{i-1}<\RR_{i}$ and $\QQ_{i-1}<
\QQ_{i}$, or $\RR_{i-1}>\RR_{i}$ and $\QQ_{i-1}>\QQ_{i}$,. In particular, the partial order $\Up(\Sigma)$ is connected. 
\end{itemize}
\end{lemm}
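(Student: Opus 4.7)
For (1), the point is that a shard $\Sigma$ is, by construction, the closure of a single connected component of $\PP\setminus\bigcup(\UU\cap\PP)$ and is tiled by the small walls in $\sw(\Sigma)$ with incidences along $(n{-}2)$-dimensional cones of $\Sigma(A)$. A standard polyhedral/topological argument (any two tiles of a connected finite polyhedral complex are linked by a sequence of pairwise adjacent tiles) produces the sequence $\FF_0,\FF_1,\ldots,\FF_k$ with each consecutive pair sharing a codimension-$1$ face inside $\Sigma$, which is exactly what adjacency in $\sw(\Sigma)$ means.

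For (2), I would analyze each consecutive pair $\FF_i,\FF_{i+1}$ via the rank-$2$ reduction at $C(U)=\FF_i\cap\FF_{i+1}$, where $U\in\twopsilt^{n-2}A$. By Theorem~\ref{reduction} one obtains an algebra $B$ with $|B|=2$, a fan isomorphism $\Sigma(A)/C(U)\simeq\Sigma(B)$ and a poset isomorphism $\twosilt_U A\cong\twosilt B$. Under this reduction, $\FF_i$ and $\FF_{i+1}$ project to two opposite rays $r_1,r_2$ of the line $\overline H:=\pi(H(\Sigma))$, while Corollary~\ref{cor rank2} describes $\Sigma(B)$ with extreme chambers $T_{\max}^B=C(B)$ and $T_{\min}^B=C(B[1])=-C(B)$. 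The hypothesis that $\FF_i$ and $\FF_{i+1}$ lie in a common shard---equivalently, that $C(U)$ is not cut---unwinds, via the definition of cuts, to the statement that $\overline H$ contains a facet of $T_{\max}^B$ or of $T_{\min}^B$.

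The crucial geometric observation is that $T_{\min}^B=-T_{\max}^B$ in $K_0(\proj B)_\R$, so the facets of these two chambers occur in antipodal pairs through the origin. Hence along $\overline H$, exactly one of $r_1,r_2$ is a facet of $T_{\max}^B$ and the other is a facet of $T_{\min}^B$. Combining Theorem~\ref{poset hyperplane} applied to $\Sigma(B)$ with the coverings $\RR_i\covers\QQ_i$ and $\RR_{i+1}\covers\QQ_{i+1}$, I can identify the four projections explicitly: in one configuration $\pi(\RR_i)=T_{\max}^B$ and $\pi(\QQ_{i+1})=T_{\min}^B$, with $\pi(\QQ_i)$ and $\pi(\RR_{i+1})$ being the respective neighbors across $r_1$ and $r_2$; the other configuration is obtained by swapping the roles of $\RR$ and $\QQ$. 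Since $T_{\max}^B$ and $T_{\min}^B$ are the extremes of $\twosilt B$, this forces $\RR_i\geq\RR_{i+1}$ and $\QQ_i\geq\QQ_{i+1}$ simultaneously, or else both reversed, after transporting through the poset isomorphism of Theorem~\ref{reduction}. Connectedness of $\Up(\Sigma)$ then follows by iterating (1) and chaining the resulting comparabilities. The most delicate step I foresee is rigorously translating the ``not cut'' condition into the above facet condition on $\overline H$, together with verifying that the ``positive side'' of $H(\Sigma)$ in $\Sigma(A)$ corresponds under $\pi$ to the ``positive side'' of $\overline H$ in $\Sigma(B)$; once this bookkeeping is in place, the antipodal-facet observation cleanly finishes the proof.
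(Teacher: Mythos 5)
Your argument matches the paper's: part (1) is connectedness of the shard as a polyhedral complex tiled by small walls, and part (2) uses the rank-$2$ reduction at $\LL=\FF_i\cap\FF_{i+1}$, with the not-cut condition forcing the two rays of $\overline H$ to be the antipodal facets of $T^B_{\max}$ and $T^B_{\min}$ and then reading off the order relations. One small correction to your description of the two configurations: since the reduction carries upper chambers to the positive side of $\overline H$, the side on which $T^B_{\max}$ sits, an $\RR$ can project to $T^B_{\max}$ but a $\QQ$ never can; so the second configuration is $\pi(\RR_{i+1})=T^B_{\max}$ and $\pi(\QQ_i)=T^B_{\min}$ (interchange $i\leftrightarrow i{+}1$, not $\RR\leftrightarrow\QQ$), which still yields exactly the claimed dichotomy.
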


\begin{proof}
(1) follows from the definition. 
(2) Take an $(n-2)$-dimensional cone $\LL_i:=\FF_{i-1}\cap\FF_{i}\cap\Sigma$. 
Then we have $\QQ_{i-1},\QQ_i,\RR_{i-1},\RR_i\in\C(A,\LL_i)$. 
Moreover, by definition of shards, $\Sigma$ is not cut along $\LL_i$, and $\RR_i\covers\QQ_i$. Since Corollary \ref{cor rank2} implies that  
$\C(A,\LL_i)$ is isomorphic to $\C(B)$, where $B$ is an algebra of rank 2, $\RR_i$ (resp. $\QQ_{i-1}$) is the maximum element (resp. the minimum element) of $\C(A,\LL)$ or 
 $\RR_{i-1}$ (resp. $\QQ_{i}$) is the maximum element (resp. the minimum element) of $\C(A,\LL)$. 
Thus, we get the desired conclusion.
\end{proof}

Let $\Sigma$ be a shard and 
$\RR_1,\RR_2,\RR_{3}\in\Up(\Sigma)$. 
For simplicity, we use the same notation as Lemma \ref{seq of poset}, that is, we let $\FF_k:=\RR_k\cap\Sigma$ for 
$k=1,2,3$. Assume that $\RR_1,\RR_2,\RR_{3}\in\Up(\Sigma)$ are adjacent, that is, $\RR_i$ and $\RR_{i+1}$ share an   $(n-2)$-dimensional cone $\LL_i:=\FF_{i}\cap\FF_{i+1}$ for $i=1,2$. 
Under the above setting, we have the following lemma.

\begin{lemm}\label{existence}
Let $\Sigma$ be a shard and 
$\RR_1,\RR_2,\RR_{3}\in\Up(\Sigma)$ three adjacent upper chambers.
If $\RR_1<\RR_2>\RR_{3}$ and $\RR_1\neq\RR_3$, then 
there exists $\MM\in\Up(\Sigma)$ and increasing chains in $\Up(\Sigma)$ such that $\MM<\RR_1$ and $\MM<\RR_3$.
\end{lemm}

\begin{proof}
By our assumption $\RR_{1}<\RR_{2}>\RR_{3}$,  
$\RR_2$ is the maximal element of $\C(A,\LL_1)$ and $\C(A,\LL_{2})$. In particular, $\RR_2$ covers 3 different chambers since it also covers $\QQ_2$, where $\QQ_2$ the corresponding lower chamber of $\RR_2$.  
 
(i) Assume that $|A|= 3$. 
In this case, $\RR_2$ is the unique maximal chamber $C(A)$. Thus $\Sigma$ is a basic hyperplane, and hence there is a unique chamber $\MM$ in $\Up(\Sigma)$ which covers the minimal element $C(A[1])$. 
By Theorem \ref{poset hyperplane}, 
$\MM$ is a unique minimal element of $\Up(\Sigma)$, and 
hence $\MM<\RR_1$ and $\MM<\RR_3$.

(ii) Assume that $|A|=n$. 
Let $\NN$ be $(n-3)$-dimensional cone given by the intersection $\LL_1$ and $\LL_2$. 
By applying Theorem \ref{reduction}, 
we have an isomorphism of fans 
\[\pi:\Sigma(A)/\NN\cong\Sigma(B),\]
where $B$ is a finite dimensional algebra such that $|B|=3$ and 
$\pi(\RR_2)\in \Up(\pi(\Sigma))$ is the maximal chamber in $\Sigma(B)$. Thus 
$\pi(\Sigma)$ is a basic hyperplane and hence there is a unique minimal element $\MM'$ in $\Up(\pi(\Sigma))$ as (i).

Since $\pi$ is a natural projection, 
for each chamber $\PP'$ in $\Up(\pi(\Sigma))$, there exist a chamber $\PP$ in $\Up(\Sigma)$ such that $\pi(\PP)=\PP'$. 
Hence there is an element $\MM$ in 
$\Up(\Sigma)$  
such that $\pi(\MM)=\MM'$, which is a unique minimal upper chamber of $\Up(\Sigma)$ in $\Sigma(A)/\NN$ and, in particular, we have $\MM<\RR_1$ and $\MM<\RR_3$. 
\end{proof}

The following proposition gives a  connection between shards and join-irreducible elements, which is a natural generalization of \cite[Proposition 3.3]{Re3} in our setting.

\begin{prop}\label{unique minimal}
For any shard $\Sigma$, 
there exists a unique minimal element of $\Up(\Sigma)$ and it is join-irreducible. 
Thus, we can define the map 
$$\shard A \longrightarrow\jirr (\C(A)),\ \ \ \Sigma\mapsto\JJ(\Sigma):=\min\Up(\Sigma).$$ 
\end{prop}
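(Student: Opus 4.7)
The plan is to prove the proposition in three stages: identifying a canonical downward cover for each upper chamber, establishing a unique minimum of $\Up(\Sigma)$, and then verifying that this minimum is join-irreducible.

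Stage 1 is immediate from the nonsingularity of $\Sigma(A)$ in Proposition \ref{property of g-cone}(1): every chamber of $\C(A)$ is a simplicial cone whose $n$ facets span pairwise distinct hyperplanes. Consequently each $\RR\in\Up(\Sigma)$ has a \emph{unique} facet $F_\RR$ in $H(\Sigma)$, which automatically lies in $\Sigma$ since $\RR\in\Up(\Sigma)$. Letting $\RR_*$ be the chamber adjacent across $F_\RR$, Theorem \ref{poset hyperplane} gives $\RR\covers\RR_*$ with $\RR_*\in\Lo(\Sigma)$, and $\RR\mapsto\RR_*$ yields a natural bijection $\Up(\Sigma)\leftrightarrow\Lo(\Sigma)$.

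Stage 2 is the heart of the proof. The key technical claim is that $\Up(\Sigma)$ is closed under the meet of the lattice $\tors A\cong\C(A)$ (a lattice by Theorem \ref{lattice}): for any $\MM,\NN\in\Up(\Sigma)$, the meet $\MM\wedge\NN$ again lies in $\Up(\Sigma)$. Granting this closure, $\JJ(\Sigma):=\bigwedge_{\RR\in\Up(\Sigma)}\RR$ is the unique minimum. I would prove meet-closure by induction on the length of an adjacency path between $\MM$ and $\NN$ supplied by Lemma \ref{seq of poset}(1). For the base case of two upper chambers sharing an $(n-2)$-dimensional face $L$, the rank-$2$ reduction $\C(A,L)\cong\C(B)$ of Corollary \ref{cor rank2}, together with the fact that $\Sigma$ is not cut along $L$ (by the very definition of shard) and the sync property of Lemma \ref{seq of poset}(2), forces the two chambers to be comparable, so their meet is trivially one of them. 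The inductive step propagates closure by invoking the semidistributivity of $\tors A$ (Theorem \ref{lattice}) together with the sync property to reroute meets past intermediate chambers along the path.

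Stage 3 addresses join-irreducibility of $J:=\JJ(\Sigma)$. The cover $J\covers J_*$ is already built in. Suppose for contradiction that $J$ covers another chamber $\MM'\neq J_*$ across a facet $F'\subseteq H'$ with $H'\neq H(\Sigma)$. Setting $L:=F_J\cap F'$, an $(n-2)$-dimensional cone, the rank-$2$ reduction $\C(A,L)\cong\C(B)$ of Corollary \ref{cor rank2} places $J$ as the top chamber, covering both $J_*$ and $\MM'$. A careful analysis of how $H(\Sigma)$ slices the rank-$2$ picture, combined with the fact that $\Sigma$ is not cut along $L$ (because $F_J$ is a facet of $T_{\max}=J$ contained in $\Sigma$), exhibits a further chamber on the positive side of $H(\Sigma)$ in $\C(A,L)$ that lies in $\Up(\Sigma)$ and is strictly below $J$, contradicting the minimality of $J$.

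The main obstacle is the meet-closure claim in Stage 2. Since $\Up(\Sigma)$ is cut out of $\C(A)$ by a purely geometric condition — having an $(n-1)$-dimensional face in $\Sigma$ — rather than by a lattice-theoretic one, closure under meets is not automatic and must be extracted by delicately combining the local rank-$2$ reductions of Corollary \ref{cor rank2} (which govern adjacencies across $(n-2)$-faces) with the global semidistributivity of $\tors A$ from Theorem \ref{lattice}. In particular, handling the ``peak'' configurations along a general adjacency path, where Lemma \ref{seq of poset}(2) only furnishes pairwise directional information, is the delicate technical point where the bulk of the work will lie.
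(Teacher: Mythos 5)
There is a genuine gap, and you have yourself located it: the meet-closure claim in Stage~2 is the entire content of the proposition, and you do not prove it. Saying that it ``must be extracted by delicately combining the local rank-$2$ reductions \ldots with the global semidistributivity'' and that ``handling the `peak' configurations \ldots is the delicate technical point where the bulk of the work will lie'' is an accurate self-diagnosis, not a proof. As it stands, Stage~2 is a restatement of the problem, not a solution to it. Moreover, meet-closure of $\Up(\Sigma)$ — i.e.\ that $\MM\wedge\NN\in\Up(\Sigma)$ whenever $\MM,\NN\in\Up(\Sigma)$ — is a strictly stronger statement than the existence of a unique minimum: for two far-apart upper chambers, $\MM\wedge\NN$ may lie nowhere near the hyperplane $H(\Sigma)$ and have no facet in $\Sigma$ at all. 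Nothing in your sketch rules this out, and the paper never needs or asserts such a sublattice property.

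The paper's actual proof runs in the opposite order and avoids the issue. Step~(i): it first shows that \emph{any} minimal element $\JJ$ of $\Up(\Sigma)$ is join-irreducible — this is essentially your Stage~3 argument, a rank-$2$ reduction at $\LL=\JJ\cap\JJ_1\cap\JJ_2$ using Corollary~\ref{cor rank2} and the fact that $\Sigma$ is uncut along $\LL$, producing a smaller upper chamber and a contradiction. Step~(ii): with join-irreducibility of minimal elements in hand, it assumes two incomparable minimal elements $\RR_0$ and $\RR_k$, joins them by an adjacency path $\RR_0<\RR_1<\cdots<\RR_{k-1}>\RR_k$ with parallel lower chambers $\QQ_i$ (Lemma~\ref{seq of poset}), and uses Proposition~\ref{max in n-2} repeatedly to propagate $\RR_0\vee\QQ_i=\RR_i$ along the path, establishing $\RR_0\nleq\QQ_{k-1}$ and hence $\RR_0\vee\QQ_{k-1}=\RR_{k-1}=\RR_k\vee\QQ_{k-1}$. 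Semidistributivity then gives $(\RR_0\wedge\RR_k)\vee\QQ_{k-1}=\RR_{k-1}$; but the join-irreducibility of $\RR_0$ forces $\RR_0\wedge\RR_k\leq\QQ_0$, hence $(\RR_0\wedge\RR_k)\vee\QQ_{k-1}\leq\QQ_{k-1}$, a contradiction. Note how semidistributivity enters as a one-shot equational step applied to a carefully prepared pair of joins — not as a tool for ``rerouting meets'' along a path. If you want to salvage your Stage~2, you should reorder: prove join-irreducibility of minimal elements first (your Stage~3 already does this modulo cosmetic rephrasing), then aim only for uniqueness of the minimum rather than full meet-closure, using the join-irreducibility and the path structure as above.
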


\begin{proof}
In the case of $|A|\leq 2$, the statement is clear. Thus we only consider the case of $|A|\geq3$.

(1) First we show that a minimal element is join-irreducible. This is shown by the same argument of \cite[Proposition 2.2]{Re2}. 
Let $\JJ$ be a minimal element of $\Up(\Sigma)$.
Assume that $\JJ$ covers more than one element. 
We can take $\JJ\covers \JJ_1$ and $\JJ\covers \JJ_2$ such that  $\JJ_1$ is a lower chamber of $\Sigma$, that is, $\JJ\cap\JJ_1$ belongs to $\Sigma$. 
Then, we can take an $(n-2)$-dimensional cone $\LL=\JJ\cap\JJ_1\cap\JJ_2$ and $\JJ$ is the maximum of $\C(A,\LL)$. By Corollary \ref{cor rank2}, 
we have the following picture. 

\[{\begin{xy}
0;<3pt,0pt>:<0pt,3pt>::
(7,15)="0",
(-7,15)="1",
(0,0)*{\bullet},
(0,0)="2",
(7,-15)="3",
(-7,-15)="4",
(13,7)="5",
(13,-10)="a",
(-13,5)="6",
(-13,-4)="b",
(-13,7)="7",
(10,0)*{\vdots},
(-10,2)*{\vdots},
(0,13)*{\JJ},
(0,-16)*{},
(16,-11)*{},
(8,8)*{\JJ_1},
(-9,-8)*{\RR},
(-8,9)*{\JJ_2},
\ar@{-}"2";"0",
\ar@{-}"2";"1",
\ar@{-}"2";"3",
\ar@{-}"2";"4",
\ar@{-}"2";"a",
\ar@{-}"2";"5",
\ar@{-}"2";"b",
\ar@{-}"2";"7",
\end{xy}}\]

Therefore, we can take $\RR$, which belongs to $\Up(\Sigma)$ by definition of shard, and $\JJ>\RR$. This contradicts the minimality of $\JJ$. 

(2) Next we show that a minimal element of $\Up(\Sigma)$ is unique. 
Let $\JJ$ be a minimal element and assume that it is not the unique minimum of $\Up(\Sigma)$. 
Let $\Omega$ be the set of chambers of $\Up(\Sigma)$ 
such that they are greater than $\JJ$ and they cover a chamber of $\Up(\Sigma)$ which does not greater than $\JJ$, that is, 
$$\Omega:=\{\mathcal{X}\in\Up(\Sigma)\ |\ \JJ\leq \mathcal{X}\ \textnormal{and}\ \mathcal{Y}\covered \mathcal{X}\  \textnormal{for some }\mathcal{Y}\in\Up(\Sigma)\ \textnormal{s.t.}\ \JJ\nleq \mathcal{Y}\}.$$
Since $\JJ$ is not the unique minimal element, $\Omega$ is not empty from the connectedness of $\Up(\Sigma)$ by Lemma \ref{seq of poset} (2). 
Take a minimal element $\RR$ of $\Omega$ and an increasing chain 
$$\JJ=\RR_0\covered\RR_1\covered\cdots\covered\RR_{k-1}\covered\RR_k=\RR.$$ 
By the definition of $\Omega$, $\RR$ covers some $\mathcal{Y}\in\Up(\Sigma)$ which is not above $\JJ.$ 
Then, by applying Lemma \ref{existence} to $\RR_{k-1}<\RR_k>\mathcal{Y}$, there exists $\MM\in \Up(\Sigma)$ 
such that  
$\RR_{k-1}>\MM$ and $\MM<\mathcal{Y}$. 
Since $\MM<\mathcal{Y}$, we have $\JJ\nleq\MM$. 
On the other hand, consider the decreasing chain from $\RR_{k-1}$ to $\MM$ and we have the following sequence of chambers 

$$\RR_{k-1}=\RR_{k-1}^0\covers\RR_{k-1}^1\covers\cdots\covers\RR_{k-1}^{m-1}\covers\RR_{k-1}^m=\MM.$$ 

Then, since $\JJ\leq\RR_{k-1}$ and $\JJ\nleq\MM$, there exists an integer $1\leq\ell\leq m-1$ such that 
$\JJ\leq \RR_{k-1}^\ell$ and $\JJ\nleq\RR_{k-1}^{\ell+1}$. 
Namely, we have 
$\RR_{k-1}^\ell\in \Omega$, but this contradicts the minimality of $\RR$ since $\RR_{k-1}^\ell<\RR$. Therefore, $\JJ$ is the unique minimal element of $\Up(\Sigma)$.
\end{proof}

Next we recall the following important result. 

\begin{thm}\cite{T2,RST}\label{T2,RST}
For a Hasse arrow $a\to b\in\Hasse A$, 
we have the minimum element in $\{x\mid b\vee x =a\}$ and it is join-irreducible. In particular, 
we have a map 
$$\gamma:\Hasse A\longrightarrow \jirr A,\ \ \ (a\to b) \mapsto \min\{x\mid b\vee x =a\}.$$
\end{thm}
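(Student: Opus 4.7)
The plan is to use the semidistributivity of $\tors A \cong \C(A)$ established in Theorem~\ref{lattice}, together with the fact that $a$ covers $b$. Let $S := \{x \in \C(A) \mid b \vee x = a\}$. Note that $a \in S$ since $b < a$, so $S$ is nonempty.

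First I would establish that $S$ has a minimum element. For any $x_1, x_2 \in S$, we have $b \vee x_1 = b \vee x_2 = a$, so semidistributivity (ii) of Definition~\ref{def join} gives $b \vee (x_1 \wedge x_2) = a$, i.e.\ $x_1 \wedge x_2 \in S$. Since $\C(A)$ is finite (because $A$ is $g$-finite), we can take the meet over all elements of $S$ to obtain a unique minimum element
\[
\gamma := \bigwedge_{x \in S} x \in S.
\]

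Next I would show that $\gamma$ is join-irreducible. Suppose for contradiction that $\gamma = y \vee z$ with $y, z \lneq \gamma$. Then
\[
a = b \vee \gamma = b \vee y \vee z.
\]
By minimality of $\gamma$, neither $y$ nor $z$ lies in $S$, so $b \vee y < a$ and $b \vee z < a$. Since $a \covers b$ and $b \leq b \vee y \leq a$, the only possibility is $b \vee y = b$, hence $y \leq b$; similarly $z \leq b$. But then $\gamma = y \vee z \leq b$, which forces $b \vee \gamma = b \neq a$, contradicting $\gamma \in S$. Therefore $\gamma$ is join-irreducible, which gives the map $\gamma : \Hasse A \to \jirr A$.

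The main technical point is the application of semidistributivity to guarantee closure of $S$ under meets; the join-irreducibility then follows quickly from the covering relation $a \covers b$. No serious obstacle is expected, since the argument is essentially a formal consequence of the lattice-theoretic hypotheses available from Theorem~\ref{lattice}.
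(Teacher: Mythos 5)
The paper does not prove this statement itself---it is cited from \cite{T2,RST}---so there is no in-paper proof to compare against. Your argument is the standard lattice-theoretic proof of this fact for finite semidistributive lattices, and it is correct apart from one small omission: the definition of join-irreducible (Definition~\ref{def join}(1)) also requires that the element is \emph{not the minimum} of the lattice, and your argument only rules out a decomposition $\gamma = y\vee z$ with $y,z < \gamma$. You should add the observation that $\gamma$ cannot be the minimum element $\hat0$ of $\C(A)$: if it were, then $b\vee\gamma = b\vee\hat0 = b \neq a$, contradicting $\gamma\in S$. With that line added, the proof is complete. (It would also be slightly cleaner to note explicitly that $\gamma\le a$, since $\gamma\in S$ gives $b\vee\gamma=a$; this is what guarantees $b\vee y\le a$ before you invoke the covering relation.) The two main ingredients you use---closure of $S$ under meets via semidistributivity~(ii), together with the cover $a\covers b$ forcing $y,z\le b$---are exactly the standard route taken in the references.
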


We also define the following terminology. 

\begin{defi}\label{hasse to shard}
Given $\MM\covers\NN$ in $\C(A)$, there is a unique shard containing $\MM\cap\NN$, which we denote by $\Sigma(\MM\covers\NN)$.
If $\MM$ is join-irreducible, then there exists a unique element $\MM^*$ such that $\MM\covers\MM^*$. 
In this case, we simply denote by $\Sigma(\MM)$ instead of $\Sigma(\MM\covers\MM^*)$. Thus, via identification $\C(A)$ with $\twosilt A\cong\tors A$,  
 we can define a map 
$$\Sigma:\Hasse A\longrightarrow \shard A,\ \ \ (a\to b) \mapsto \Sigma(a\covers b).$$
\end{defi}

Theorem \ref{T2,RST} and Definition \ref{hasse to shard} can be related as follows. 

\begin{cor}\label{poset=shard}
Let $a\to b$ be a Hasse arrow of $\Hasse A$.
Then we have 
$$\JJ(\Sigma(a\to b))=\gamma(a\to b).$$
\end{cor}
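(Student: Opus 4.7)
The plan is to prove $\gamma(a\covers b)=\JJ(\Sigma(a\covers b))$ by establishing both inequalities. Set $\Sigma:=\Sigma(a\covers b)$, $\JJ:=\JJ(\Sigma)$, and $\gamma:=\gamma(a\covers b)$. Since $a\cap b\subset\Sigma$ and Theorem~\ref{poset hyperplane} places $a$ in the positive half-space of $H(\Sigma)$, we have $a\in\Up(\Sigma)$; hence $\JJ\leq a$ by Proposition~\ref{unique minimal}. That proposition also makes $\JJ$ join-irreducible; writing $\JJ^*$ for its unique cover, the only facet of $\JJ$ whose adjacent chamber lies below $\JJ$ is $\JJ\cap\JJ^*$ (by Theorem~\ref{poset hyperplane} combined with join-irreducibility), so this facet lies in $\Sigma$ and $\Sigma(\JJ\covers\JJ^*)=\Sigma$.

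For $\gamma\leq\JJ$, I prove $\JJ\vee b=a$, whence $\gamma\leq\JJ$ by the minimality characterization in Theorem~\ref{T2,RST}. Apply Lemma~\ref{seq of poset} to obtain sequences $a=\RR_0,\ldots,\RR_k=\JJ$ in $\Up(\Sigma)$ and $b=\QQ_0,\ldots,\QQ_k=\JJ^*$ in $\Lo(\Sigma)$, with Hasse covers $\RR_i\covers\QQ_i$, each common facet contained in $\Sigma$, and the piecewise-monotonic pattern of Lemma~\ref{seq of poset}(2) around the $(n-2)$-dimensional cones $\LL_i$. By induction on $i$, show $\RR_i\vee b=a$: the base case is trivial; the monotone-up step is immediate from $a=\RR_{i-1}\vee b\leq\RR_i\vee b\leq a$; the monotone-down step uses the bow-tie structure of the rank-2 interval $\C(A,\LL_i)$ from Corollary~\ref{cor rank2}, Proposition~\ref{max in n-2}, and the semi-distributivity of $\C(A)$ (Theorem~\ref{lattice}) to propagate the equality from $\RR_{i-1}$ to $\RR_i$. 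Setting $i=k$ yields $\JJ\vee b=a$.

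For $\JJ\leq\gamma$, note that $\gamma$ is join-irreducible with unique cover $\gamma^*$ satisfying $\gamma^*\leq b$: indeed $\gamma^*\vee b\in[b,a]=\{b,a\}$, and $\gamma^*\vee b=a$ would contradict the minimality of $\gamma$. Thus $\gamma\covers\gamma^*$ is a Hasse arrow, and it suffices to show $\Sigma(\gamma\covers\gamma^*)=\Sigma$, for then $\gamma\in\Up(\Sigma)$ gives $\gamma\geq\JJ$ by minimality of $\JJ$. I expect this shard equality to follow from tracking the $\gamma$-label along the sequence of Lemma~\ref{seq of poset}, proved by the same rank-2 local analysis as above; combined with the direct calculation $\gamma(\JJ\covers\JJ^*)=\JJ$ (since $\JJ$ is join-irreducible, the only $x$ with $\JJ^*\vee x=\JJ$ is $\JJ$ itself), this yields $\gamma(a\covers b)=\JJ$. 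The main obstacle is precisely this local preservation of $\gamma$-labels: the subtlety is that $\gamma$ is a \emph{global} minimum in $\C(A)$, so the argument must combine the local bow-tie structure (Corollary~\ref{cor rank2}) with the global semi-distributive laws of $\C(A)$ to transfer the minimum across adjacent small walls of $\Sigma$.
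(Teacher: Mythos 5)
There is a genuine gap in the second half of your argument. The direction $\gamma\leq\JJ$ is fine (once $\JJ\vee b=a$ is established, $\gamma$'s minimality in $\{x:b\vee x=a\}$ gives $\gamma\leq\JJ$). But the reverse direction $\JJ\leq\gamma$ rests on the claim $\Sigma(\gamma\covers\gamma^*)=\Sigma$, and you explicitly say you only \emph{expect} this to follow from ``tracking the $\gamma$-label along the sequence.'' That label-transfer along adjacent small walls of $\Sigma$ is precisely the nontrivial content needed, and it is not supplied; as you yourself note, $\gamma$ is a global minimum, so the local bow-tie picture of Corollary~\ref{cor rank2} does not by itself tell you where $\gamma\cap\gamma^*$ sits. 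As written, the proof is incomplete.

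The paper closes the argument differently and much more economically. Set $j:=\JJ(\Sigma(a\to b))$. Using Lemma~\ref{seq of poset} (and the argument already carried out in the proof of Proposition~\ref{unique minimal}), one gets $j\nleq b$. Combined with $j\leq a$, $b\covered a$, and the fact that $j$ is join-irreducible with unique lower cover $j^*$, this yields both $b\vee j=a$ (since $b<b\vee j\leq a$ forces equality) and $b\wedge j=j^*$ (since $j^*\leq b$ from the $\QQ$-sequence, while $b\wedge j<j$ forces $b\wedge j\leq j^*$). The paper then invokes \cite[Lemma 2.7]{En2}, which states that $\gamma(a\to b)$ is the \emph{unique} join-irreducible $j$ with $b\vee j=a$ and $b\wedge j=j^*$; this gives $j=\gamma(a\to b)$ in one stroke and makes the second inequality $\JJ\leq\gamma$ unnecessary. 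To repair your proof with minimal change: you already have $b\vee\JJ=a$; add the short observation $b\wedge\JJ=\JJ^*$ (from $\JJ^*\leq b$ together with $\JJ\nleq b$) and cite \cite[Lemma 2.7]{En2} instead of attempting to locate the shard of $\gamma$ directly.
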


\begin{proof}
Let $j:=\JJ(\Sigma(a\to b))$. 
It is enough to show that 
$b\vee j=a$ and $b\wedge j=j^*$ since no element smaller than $j$ can be in the set $\{x\mid b\vee x =a\}$ and hence $j$ is the minimum (see \cite[Lemma 2.7]{En} for example).  
Let $\Sigma:=\Sigma(a\to b)$. By Propositon \ref{unique minimal}, 
we can take $\RR_i\in \Up(\Sigma)$ and $\QQ_i\in \Lo(\Sigma)$ ($1\leq i\leq k$) such that 
$j=:\RR_0<\RR_1<\cdots<\RR_{k}:=a$, 
$j^*=:\QQ_0<\QQ_1<\cdots<\QQ_{k}:=b$ and $\RR_{i-1},\RR_i,\QQ_{i-1},\QQ_i\in\C(A,\LL_i)$, where 
$\LL_i:=\RR_{i-1}\cap\RR_i\cap\Sigma=\QQ_{i-1}\cap\QQ_i\cap\Sigma$ is an  $(n-2)$-dimensional cone.

Now observe $\RR_0\nleq\QQ_{k}.$ 
Indeed, assume that $\RR_0\leq \QQ_{k}.$ 
Since we have $\RR_0\vee\QQ_1=\RR_1$ by Proposition \ref{max in n-2}, 
we get $\RR_0\vee Q_1=\RR_1\leq\QQ_{k}$ because $\QQ_1\leq\QQ_{k}$ and $\RR_0\leq \QQ_{k}$. 
Similarly, since $\RR_1\vee Q_2=\RR_2$, we get 
$\RR_2\leq\QQ_{k}$.  
By repeating this argument, we finally get 
$\RR_{k-1}\leq\QQ_{k}$, which is contradiction because Corollary \ref{cor rank2} implies that the poset of  $\C(A,\LL_{k})$ is given as follows :
$$\xymatrix@C=9pt@R=6pt{& \cdot\ar[r]\ar[r]&\cdots\ar[r]&\cdot\ar[r]&\RR_{k-1}\ar[dr]&\\
\RR_{k}\ar[ru]\ar[rd]&&&&&\QQ_{k-1}\\
&\QQ_{k}\ar[r]\ar[r]&\cdots\ar[r]&\ar[r]\cdot&\ar[ur]\cdot&
}$$

Thus, we conclude $\RR_0\nleq\QQ_{k}$ and similarly $\RR_j\nleq\QQ_{k}$ for any $0\leq j\leq k-1$.  
Hence we have $\RR_0\vee\QQ_{k}=\RR_{k}$ and $\RR_0\wedge\QQ_{k}=\QQ_{0}$.
\end{proof}

Then we can establish a bijection between $\jirr (\C(A))$ and $\shard A$. 
Moreover, shards provide a canonical join representation of $\C(A)$, which is an analog of \cite[Theorem 3.6]{Re3}.

\begin{thm}\label{bij jirr and shard}
\begin{itemize}
\item[(1)]
There exists a bijection
$$\xymatrix{\jirr (\C(A))\ar@<0.5ex>[rr]^{\Sigma(-)}&
&\ar@<0.5ex>[ll]^{\JJ(-)} \shard A
}$$

\item[(2)] For any $\RR\in\C(A)$, we have 
$$\RR=\bigvee_{i=1}^\ell\JJ(\Sigma_i),$$ 
where $\Sigma_i$ runs over all lower shards of $\RR$ and it is a canonical join representation. 
\end{itemize}
\end{thm}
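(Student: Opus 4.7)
The plan is to establish part (1) by checking the two maps are mutually inverse, and then deduce part (2) by rephrasing the lower shards of $\RR$ as Hasse arrows out of $\RR$ and invoking the canonical join representation theorem for finite semidistributive lattices.

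For part (1), I would first extend $\Sigma(-)$ from $\jirr A$ via Definition \ref{hasse to shard}: for a join-irreducible $j$ with unique cover $j \covers j^*$, set $\Sigma(j) := \Sigma(j \covers j^*)$. To check $\JJ \circ \Sigma = \mathrm{id}$, I apply Corollary \ref{poset=shard} to get $\JJ(\Sigma(j)) = \gamma(j \to j^*)$; it then suffices to verify $\gamma(j \to j^*) = j$. This is a short lattice-theoretic argument: if $j^* \vee x = j$ then $x \le j$, and since $j$ covers only $j^*$ in the finite lattice $\C(A)$, every element strictly below $j$ lies below $j^*$; but $x \le j^*$ forces $j^* \vee x = j^* \ne j$, so $x = j$. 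For the other direction $\Sigma \circ \JJ = \mathrm{id}$, take $\Sigma \in \shard A$, set $j := \JJ(\Sigma)$, which is join-irreducible by Proposition \ref{unique minimal} and lies in $\Up(\Sigma)$; then the facet of $j$ contained in $\Sigma$ is $j \cap j^*$, and the uniqueness of the shard containing this facet (built into Definition \ref{hasse to shard}) forces $\Sigma(j) = \Sigma$.

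For part (2), I would first set up the bijection between lower shards of $\RR$ and Hasse arrows out of $\RR$. If $\RR \covers \RR'$, then $\Sigma(\RR \covers \RR')$ contains the facet $\RR \cap \RR'$ and has $\RR$ as an upper chamber, so it is a lower shard of $\RR$. Conversely, any lower shard $\Sigma$ of $\RR$ contributes a facet $\FF \subset \RR \cap \Sigma$ together with the adjacent chamber $\RR'$ covered by $\RR$, and $\Sigma = \Sigma(\RR \covers \RR')$. The fact that this is a bijection uses that a convex chamber has at most one facet in any given hyperplane, hence at most one in a given shard. Under this identification and Corollary \ref{poset=shard}, the claimed equation becomes
\[
\RR \;=\; \bigvee_{\RR' : \RR \covers \RR'} \gamma(\RR \to \RR').
\]
Now Theorem \ref{lattice} says $\tors A \cong \C(A)$ is finite and semidistributive, so I appeal to the general canonical join representation theorem in this setting (as in \cite{T2, RST}): for every element $\RR$ of a finite semidistributive lattice, the set $\{\gamma(\RR \to \RR') : \RR \covers \RR'\}$ forms the canonical join representation of $\RR$. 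Translating back along the bijection of part (1) gives exactly the statement $\RR = \bigvee_i \JJ(\Sigma_i)$ with the required canonical join property.

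The main obstacle is the verification that the correspondence between lower shards of $\RR$ and Hasse arrows out of $\RR$ is a bijection; the forward direction is immediate from Definition \ref{hasse to shard}, but the reverse (a lower shard produces exactly one covering relation) relies on the geometric observation that a single chamber cannot have two facets inside one hyperplane, and that the chamber adjacent through the facet in the shard is strictly lower in $\C(A)$. Once this correspondence and the identity $\JJ = \gamma$ from Corollary \ref{poset=shard} are in place, both statements reduce cleanly to the lattice-theoretic results already cited.
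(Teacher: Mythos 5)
Your proof is correct and follows essentially the same route as the paper: Proposition \ref{unique minimal} for well-definedness of $\JJ$, Corollary \ref{poset=shard} together with the explicit canonical join formula for finite semidistributive lattices for part (2). The only cosmetic differences are that you organize part (1) as the two identities $\JJ\circ\Sigma=\mathrm{id}$ and $\Sigma\circ\JJ=\mathrm{id}$ (with a short lattice argument for $\gamma(j\to j^*)=j$) whereas the paper proves injectivity and surjectivity of $\JJ$ directly using connectedness of $\Up(\Sigma)$, and you cite \cite{T2,RST} where the paper cites \cite{Go} and \cite{B} for the canonical join representation; both choices are valid.
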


\begin{proof}
\begin{itemize}
\item[(1)] By Proposition \ref{unique minimal}, we can show  the map $\JJ(-)$ is well-defined and injective. 
We will show that the map $\JJ(-)$ is surjective. 
Let $\MM\in\jirr(\C(A))$.
Then $\MM\in\Up(\Sigma)$ and the only chamber covered by $\MM$ is in $\Lo(\Sigma(\MM))$. 
Since $\JJ(\Sigma(\MM))\leq\MM$, we conclude $\JJ(\Sigma(\MM))=\MM$.
\item[(2)] By Theorem \ref{lattice}, $\C(A)$ is 
semidistributive and hence the result \cite[Theorem 1]{Go} implies that any element of $\C(A)$ admits a  canonical join representation. 
Moreover, \cite[Lemma 19]{B} implies 
a  canonical join representation of $\RR$ is given by $$\displaystyle\bigvee_{\RR{\,\,>\!\!\!\!\cdot\,\,\,}\RR'}\{\gamma(\RR\to \RR')\},$$ where $\RR'$ runs over all elements covered by $\RR$. 
Then Corollary \ref{poset=shard} implies that 
$\gamma(\RR\to \RR')=\JJ(\Sigma(\RR\to \RR'))$ and hence $$\RR=\bigvee_{\RR{\,\,>\!\!\!\!\cdot\,\,\,}\RR'}\JJ(\Sigma(\RR\covers\RR'))=\bigvee_{i=1}^\ell\JJ(\Sigma_i).$$ 
Consequently, we get the desired result.
\end{itemize}
\end{proof}

\section{Shards and semistable regions}
In this section, we discuss a relationship between shards, bricks, semistable modules and wide subcategories. 
In particular, we show that a shard gives precisely the semistable 
region of a brick and it allows us to give one-to-one correspondence between the set of shards and bricks.

Let $A$ be a finite dimensional algebra which is $g$-finite. 
We define the Euler form 
$$\langle-,-\rangle:K_0(\proj A)\times K_0(\mod A)\to \mathbb{Z}$$
by 
$$\langle T,X \rangle:=\sum_{i\in\mathbb{Z}}(-1)^i\dim_k\Hom_{\Db(\mod A)}(T,X[i]
)$$ 
for any $T\in\Kb(\proj A)$ and $X\in\Db(\mod A)$, which is a non-degenerate $\mathbb{Z}$-bilinear form. 
The Euler form is naturally extended to an $\mathbb{R}$-bilinear form $$\langle -,- \rangle:K_0(\proj A)_{\mathbb{R}}\times K_0(\mod A)_{\mathbb{R}}\to \mathbb{R},$$
where $K_0(\proj A)_{\mathbb{R}}:=K_0(\proj A)\otimes_{\mathbb{Z}}{\mathbb{R}}$ and $K_0(\mod A)_{\mathbb{R}}:=K_0(\mod A)\otimes_{\mathbb{Z}}{\mathbb{R}}$. 
We regard $\theta\in K_0(\proj A)_{\mathbb{R}}$ as an $\mathbb{R}$-linear form 
$\langle \theta,- \rangle:K_0(\mod A)_{\mathbb{R}}\to \mathbb{R}$ and we write $\theta(M):=\langle \theta,M \rangle$.


We recall the notion of semistable modules as follows. 

\begin{defi}\label{semistable wide}
Let $\theta\in K_0(\proj A)_{\mathbb{R}}$.
\begin{itemize}
\item[(1)] We call a module $M\in\mod A$ $\theta$-\emph{semistable} (resp. $\theta$-\emph{stable}) if $\theta(M)=0$ and, for any quotient module $M'$ of $M$, we have $\theta(M')\geq0$ (resp. for any nonzero proper quotient module $M'$ of $M$, we have $\theta(M') > 0$). 
Moreover, we define the $\theta$-semistable subcategory $\W(\theta)$ as the full subcategory consisting of all the $\theta$-semistable modules in $\mod A$. It is a wide subcategory and the simple objects of $\W(\theta)$ are precisely $\theta$-stable modules (see \cite[Proposition 3.24]{BST} for example). 
 
\item[(2)] For $\RR\subset K_0(\proj A)_{\mathbb{R}}$. 
We define 
$$\W(\RR):=\{X\in\mod A\mid X:\theta\textnormal{-semistable for all }\theta\in\RR\}.$$ 
Note that we have  
$$\W(\RR)=\bigcap_{\theta\in\RR} \W(\theta)$$ 
and in particular $\W(\RR)$ is a wide subcategory. 
\item[(3)] For any $X\in\mod A$, we define 
$$\Theta(X):=\{\theta\in K_0(\proj A)_\mathbb{R}\mid X:\textnormal{$\theta$-semistable}\}.$$
Moreover, for a subcategroy $\Cc\subset\mod A$, 
we define 
$$\Theta(\Cc):=\{\theta\in K_0(\proj A)_\mathbb{R}\mid X:\textnormal{$\theta$-semistable for all }X\in\Cc\}.$$
Note that it is a convex cone  and we have 
$$\Theta(\Cc)=\bigcap_{X\in \Cc}\Theta(X).$$
\end{itemize}
\end{defi}

We remark that, for a simple module $S_i$, $\Theta(S_i)$ is nothing but a basic hyperplane $H_i$. 

Then we have the following fundamental property. 
The author would like to thank Haruhisa Enomoto for pointing out this proposition.

\begin{prop}\label{semistable wide2}
\begin{itemize} 
\item[(1)] The map 
$$\xymatrix{\{\textnormal{subcategories of }\mod A\}\ar@<0.5ex>[rr]^(.51){\Theta(-)}&
&\ar@<0.6ex>[ll]^(.48){\W(-)}\{\textnormal{subset of }K_0(\proj A)_{\mathbb{R}}\}}$$ 
gives an antitone Galois connection, that is,  $\Theta$ and $\W$ are order-reversing, and 
$(-)\subset\Theta\W(-)$ and $ (-)\subset\W\Theta(-)$. 
In particular, 
$\Im \Theta$ and $\Im W$ are anti-isomorphic posets.

\item[(2)] In the above setting, we have
$$\Im\W=\wide A.$$

\item[(3)] Let $\W$ be a wide subcategory such that $\{B_i\}_{i=1}^{\ell}$ are 
simple objects of $\W$,i.e, $\W=\Filt(B_1,\ldots,B_\ell)$. 
Then we have 
$$\Theta(\W)=\bigcap_{i=1}^\ell\Theta(B_i).$$

\end{itemize}
\end{prop}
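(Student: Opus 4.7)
My plan is to treat parts (1) and (3) as routine unwinding of definitions, and to reduce part (2) to a King-type realization theorem for $g$-finite algebras, which is where the real content lies. For (1), the two order-reversing inclusions are immediate: if $\Cc_1 \subset \Cc_2$, then any $\theta$ making every module in $\Cc_2$ semistable works \emph{a fortiori} for $\Cc_1$, so $\Theta(\Cc_2) \subset \Theta(\Cc_1)$, and symmetrically for $\W$. The unit inclusions $\Cc \subset \W(\Theta(\Cc))$ and $R \subset \Theta(\W(R))$ fall straight out of the definitions as well, and the anti-isomorphism between $\Im \Theta$ and $\Im \W$ is the standard consequence of any antitone Galois connection: the closed elements on each side are in order-reversing bijection via $\Theta$ and $\W$.

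For (3), the inclusion $\Theta(\W) \subset \bigcap_{i} \Theta(B_i)$ is the order-reversing property from (1) applied to $\{B_1, \ldots, B_\ell\} \subset \W$. For the reverse, if every $B_i$ is $\theta$-semistable then $B_1, \ldots, B_\ell \in \W(\theta)$; since $\W(\theta)$ is a wide subcategory (Definition \ref{semistable wide}(1)) and hence closed under extensions, every object of $\Filt(B_1, \ldots, B_\ell) = \W$ is $\theta$-semistable, giving $\theta \in \Theta(\W)$.

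For (2), the inclusion $\Im \W \subset \wide A$ is automatic, since $\W(R) = \bigcap_{\theta \in R} \W(\theta)$ is an intersection of wide subcategories and hence wide. The reverse inclusion is the crux and the main obstacle. Given $\W \in \wide A$, Theorem \ref{bij sbrick wide} presents $\W$ as $\Filt(\sS)$ for a semibrick $\sS$, and Theorem \ref{finite brick} combined with $g$-finiteness forces $\sS$ to be left finite. I would then invoke the King-type realization theorem for $\tau$-tilting finite algebras (available for instance in the form used in \cite{BST,A1}) that supplies a single $\theta_0 \in K_0(\proj A)_{\mathbb{R}}$ with $\W(\theta_0) = \W$. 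Then $\theta_0 \in \Theta(\W)$ by construction, so $\W(\Theta(\W)) \subset \W(\theta_0) = \W$, and combined with the unit inclusion from (1) this yields $\W = \W(\Theta(\W)) \in \Im \W$. The real difficulty is precisely this realization step — producing a single stability parameter for a given wide subcategory is not formal and rests on the stability technology for $\tau$-tilting finite algebras — but once imported, the three parts assemble cleanly.
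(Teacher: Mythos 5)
Your proof is correct and takes essentially the same route as the paper. All three parts match: (1) is the routine Galois-connection unwinding the paper attributes to the definitions, (3) is the same two-inclusion argument using that $\W(\theta)$ is wide and closed under $\Filt$, and (2) in both cases rests on Yurikusa's theorem (cited as \cite{Y} in the paper) that for $\tau$-tilting finite algebras every wide subcategory is of the form $\W(\theta)$; your detour through $\W(\Theta(\W))$ is unnecessary since $\W = \W(\{\theta_0\})$ already exhibits $\W$ in $\Im\W$, but it is not wrong.
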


\begin{proof}
(1) This follows from Definition \ref{semistable wide}.

(2) Since $A$ is $\tau$-tilting finite, wide subcategories and semistable categories coincide by  \cite{Y}. Thus this follows from (1).

(3) Clearly, we have $\Theta(\W)\subset\bigcap_{i=1}^\ell\Theta(B_i)$.
On the other hand, let $\theta\in\bigcap_{i=1}^\ell\Theta(B_i)$.
Then, for any $i$, $B_i$ is $\theta$-semistable and belongs to $W(\theta)$. 
Since $W(\theta)$ is wide, we have 
$\W=\Filt(B_1,\ldots,B_\ell)\subset\W(\theta)$ and hence  $\theta\in\Theta(\W).$
\end{proof}

Next we recall the notion of brick labeling.

\begin{definition-theorem}\cite{DIRRT}
There exists an arrow $q:\T\to\U$ of $\Hasse(\tors A)$ if and only if the set of bricks in $\T\cap\U^{\perp}$ contains exactly
one element $B_q$. In this case, 
we have  $\T\cap\U^{\perp}= \Filt(B_q)$. 
We denote $B_q$ by $B(\T\to U)$.
Thus, we have a map 
$$\Hasse A \to\brick A,\ \ \  (a\to  b)\mapsto B(a\to b).$$ 

In this way, for each arrow $q$ of $\Hasse A$, we can associate a brick $B_q$ 
and we call this label \emph{brick labeling}. 
If $a\in\jirr(A)$, then there exists a unique element $a^*$ such that $a\covers a^*$. 
In this case, we write $B(a)$ 
instead of $B(a\to a^*)$. 
\end{definition-theorem}

From now on, we fix $U\in\twopsilt^{n-2}A$. 
Recall that  there exists a finite dimensional  algebra $B$ such that $|B|=2$ and 
$$\twosilt_U A\cong \twosilt B.$$

Thus, there exist 
$$T_{\max},T_{\min},T_1,T_2,\ldots,T_\ell,T_1',T_2',\ldots,T_m'\in\twosilt_U A$$ such that $T_{\min}<T_1<T_2<\cdots<T_\ell<T_{\max}$ and 
$T_{\min}<T_1'<T_2'<\cdots<T_m'<T_{\max}$. 
The Hasse quiver is given as follows:
$$\xymatrix{&T_\ell \ar[r]&T_{\ell-1}\ar[r]&\cdots\ar[r]&T_2\ar[r]&T_1\ar[dr]&\\
T_{\max}\ar[ru]\ar[rd]&&&&&&T_{\min}.\\
&T_m'\ar[r]&T_{m-1}'\ar[r]&\cdots\ar[r]&T_2'\ar[r]&T_1'\ar[ur]&
}$$

By Theorem \ref{poset iso}, we have the corresponding torsion classes, which are defined by  $\T_{\max}:=\Fac(H^0(T_{\max}))$, $\T_{\min}:=\Fac(H^0(T_{\min}))$, $\T_i:=\Fac(H^0(T_{i}))$ ($1\leq i\leq\ell$) and $\T_j':=\Fac(H^0(T_{j}'))$ ($1\leq j\leq m$).
We also define bricks $B_i:=B(\T_{i}\to\T_{i-1})$  ($1\leq i\leq\ell$) and $B_j':=B(\T_{j}'\to\T_{j-1}')$  ($1\leq j\leq m$) and, 
moreover, $B_{\ell+1}:=B(\T_{\max}\to \T_{\ell})$  (resp. $B_{m+1}':=B(\T_{\max}\to \T_{m}')$).



Let $[\T_{\min},\T_{\max}]$ be the interval of torsion classes in $\tors A$ between  $\T_{\min}$ and $\T_{\max}$ . 
Recall that $[\T_{\min},\T_{\max}]\cong\twosilt_U A$
by Theorem \ref{reduction}. 
We also recall the following result by \cite{J,AP}.

\begin{lemm}\cite{J,AP}\label{brick class}
In the above setting,
\begin{itemize}
\item[(1)] we have a lattice isomorphism 
$$\xymatrix{[\T_{\min},\T_{\max}]\ar[r]^(.55){}&
 \tors B.
}$$ 
Moreover, the isomorphism preserves the brick labeling. 

\item[(2)] we have $B_1\cong B_{m+1}'$ and $B_1'\cong B_{\ell+1}$, 
which are the simple objects of $\T_{\max}\cap\T_{\min}^\perp$.
\end{itemize}
\end{lemm}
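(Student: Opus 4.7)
The plan is to leverage $\tau$-tilting reduction, using Proposition \ref{max in n-2} and Theorem \ref{reduction} as the main engines, then do some bookkeeping for the brick labels.

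For part (1), I would first combine the existing isomorphisms of posets. By Proposition \ref{max in n-2}, the interval $[\T_{\min},\T_{\max}]$ is identified with $\twosilt_U A$ via $\Fac \circ H^0(-)$; by Theorem \ref{reduction}(1), $\twosilt_U A \cong \twosilt B$ for an algebra $B$ with $|B|=2$; and since $B$ is $g$-finite (its Hasse quiver is the finite rank-$2$ picture in Corollary \ref{cor rank2}), Theorem \ref{poset iso} gives $\twosilt B \cong \tors B$. Composing yields the required lattice isomorphism $[\T_{\min},\T_{\max}] \cong \tors B$.

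For the brick-labeling preservation, the key observation is that $\W := \T_{\max} \cap \T_{\min}^{\perp}$ is a wide subcategory of $\mod A$ equivalent to $\mod B$ (this is the content of the reduction equivalence in \cite{J,AP}). For any Hasse cover $\T \covers \T'$ inside $[\T_{\min},\T_{\max}]$, the brick $B(\T \to \T')$ lies in $\T \cap (\T')^{\perp} \subset \T_{\max} \cap \T_{\min}^{\perp} = \W$, so it can be transported through the equivalence $\W \simeq \mod B$. Brick labels are intrinsic to the interval of torsion classes (since $B_q$ is determined by $\Filt(\brick(\T\cap\U^{\perp}))$), so transporting the torsion classes also transports the labels: the label of $\T\covers\T'$ in $[\T_{\min},\T_{\max}]$ corresponds to the label of the image covering relation in $\tors B$, viewed inside $\W$.

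For part (2), I would exploit the rank-$2$ structure of $\tors B$. Since $|B|=2$, the category $\mod B$ has exactly two simples $S_1,S_2$, and in the Hasse quiver of $\tors B$ the two chains running from $0$ up to $\mod B$ are labeled by bricks whose bottom-most arrow into $0$ and top-most arrow out of $\mod B$ on each chain are the simples of $B$ in alternating positions: one chain is labeled $S_1$ at the bottom and $S_2$ at the top, and the other the reverse. (This follows from the fact that $\brick(\T\cap 0^{\perp}) = \brick(\T)$ must contain a simple of $\mod B$ whenever $\T$ covers $0$, and dually.) Translating this back through the equivalence $\W \simeq \mod B$, the labels $B_1$ and $B_{m+1}'$ both correspond to the same simple of $B$, hence are isomorphic as objects of $\W\subset\mod A$; similarly $B_1'\cong B_{\ell+1}$. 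The two resulting bricks are precisely the images of $S_1,S_2$, so they are the simple objects of $\W$.

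The main obstacle is the clean verification that the equivalence $\T_{\max}\cap\T_{\min}^{\perp}\simeq\mod B$ not only restricts lattice-theoretically to the interval but also sends the brick label attached to each Hasse cover to the corresponding brick label in $\tors B$. Once one unpacks the reduction of \cite{J,AP} at the level of hearts, this is essentially a naturality statement, but it is the only step that is not immediate from the poset isomorphisms already in the excerpt.
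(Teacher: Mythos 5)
Your argument for part (1) amounts to the same citation-and-composition the paper uses (Proposition \ref{max in n-2}, Theorem \ref{reduction}, and the label-preservation results of \cite{J,AP}), so that part is fine.

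For part (2), the approach is right but the pivotal step is asserted rather than proved. You reduce to the rank-2 picture and correctly observe that the labels on the arrows into $0$ and out of $\mod B$ are the two simples of $\mod B$, but your parenthetical justification only covers \emph{why those labels are simple}; it does not address why the positions \emph{alternate}, i.e.\ why the bottom label and the top label \emph{on the same chain} are distinct simples. That alternation is exactly what $B_1\cong B'_{m+1}$ and $B_1'\cong B_{\ell+1}$ encode, and it is not automatic from the labels being simple. The missing argument (which is what the paper supplies via the observation $B_1\in\T_i$ while $B_1'\notin\T_i$) is a membership/perpendicularity check: the bottom label $B_1 = B(\T_1\to\T_{\min})$ lies in $\T_1\subset\T_\ell$, whereas the top label $B_{\ell+1}=B(\T_{\max}\to\T_\ell)$ lies in $\T_\ell^{\perp}$, so $B_{\ell+1}\notin\T_\ell$ and hence $B_{\ell+1}\neq B_1$. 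Together with your observation that both pairs of labels equal the two simples of the wide subcategory (which the paper attributes to \cite[Theorem 4.2]{AP}), this forces the cross-identification $B_1\cong B'_{m+1}$, $B_1'\cong B_{\ell+1}$. Without this step your proof of (2) has a genuine gap, since the alternation is not a formal consequence of the bottom/top labels being simple.
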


\begin{proof} 
(1) These statements are \cite{J,AP}. 
(2) By \cite[Theorem 4.2]{AP}, the simple objects of $\T_{\max}\cap\T_{\min}^\perp$ are  $\{B_1,B_1'\}$, which coincide with $\{B_{m+1}',B_{\ell+1}\}$. 
Moreover, since $B_1\in\T_i$ and  $B_1'\notin\T_i$ 
(resp. $B_1'\in\T_j'$ and  $B_1\notin\T_j'$ )
for $1\leq i\leq\ell$ (resp. for $1\leq j\leq m$), 
we get $B_1=B_{m+1}'$ and $B_1'=B_{\ell+1}$. 
\end{proof}

Then we get the following conclusion. 

\begin{lemm}\label{simple difference}
$B_1,B_2,\ldots,B_\ell,B_1',B_2',\ldots,B_m'$ are all distinct.
\end{lemm}

\begin{proof}
By our assumption,  
$\T_1,\T_2,\ldots,\T_\ell,\T_1',\T_2',\ldots,\T_m'$ are all distinct. By Lemma \ref{brick class}, we can check that 
$\T_i$ (resp. $\T_j'$) is the minimal torsion class containg $B_i$ (resp. $B_j'$) . Thus we get the conclusion. 
\end{proof}

We also recall the following fundamental result by \cite{BST,Y}. 

\begin{prop}\cite{BST,Y}\label{Yurikusa}
Let $\TT\covers \TT'$ in $\C(A)$. Then we have 
$$\W(\TT\cap \TT') = \Filt(B(\TT\to\TT')).$$
\end{prop}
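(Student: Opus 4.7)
The plan is to reduce the statement to the rank one case via silting reduction (Theorem \ref{reduction}) and then lift the identification back. Set $L := T \cap T'$, so that $L \in \twopsilt^{n-1}A$, $\add L = \add T \cap \add T'$, and the common facet $\TT \cap \TT'$ is the small wall $C(L)$. Write $B := B(\TT \to \TT')$ and $\T := \Fac H^0(T)$, $\T' := \Fac H^0(T')$. By the brick-labeling definition-theorem, $\T \cap (\T')^{\perp} = \Filt(B)$, so the target identity is equivalent to $\W(C(L)) = \T \cap (\T')^{\perp}$.

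The first step is to apply Theorem \ref{reduction} with $U = L$: this produces a rank one algebra $B'$ with a poset isomorphism $\twosilt_L A \cong \twosilt B'$, collapsing the interval $[\T',\T]$ to the two element poset $\{B'[1], B'\}$. By Lemma \ref{brick class}, brick labels are preserved, so $B$ corresponds to the unique simple $S$ of $\mod B'$, and the heart $\T \cap (\T')^{\perp} = \Filt(B)$ is identified with $\mod B' = \Filt(S)$. I would then pick $\theta_0$ in the relative interior of $C(L)$ and check that under the reduction, $\theta_0$ projects to a nonzero form $\bar\theta_0$ on $K_0(\proj B')_\mathbb{R} \cong \mathbb{R}$, with the Euler pairings compatible in the sense that $\theta_0$-semistability for a module in $\T \cap (\T')^{\perp}$ agrees with $\bar\theta_0$-semistability in $\mod B'$. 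Since every nonzero $B'$-module is trivially $\bar\theta_0$-stable, this yields $\W(\theta_0) \supseteq \Filt(B)$; conversely, any $\theta_0$-stable module has $g$-vector orthogonal to $C(L)$, and a short argument shows such a module necessarily lies in $\T \cap (\T')^{\perp}$.

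To pass from $\W(\theta_0)$ to $\W(C(L)) = \bigcap_{\theta \in C(L)} \W(\theta)$, first observe that $\W(C(L)) \subseteq \W(\theta_0) = \Filt(B)$ is automatic. For the reverse inclusion, I would check directly that every $M \in \Filt(B)$ is $\theta$-semistable for every $\theta \in C(L)$: since $\theta = \sum a_i [L_i]$ with $a_i \ge 0$, and since $B \in \T \cap (\T')^{\perp}$ forces $\Hom(L_i, B[j]) = 0$ for all $j$ (using that each $L_i$ is a summand of both $T$ and $T'$, together with the 2-term silting characterization of $\T$ and $(\T')^\perp$), additivity gives $\theta(M) = 0$, and $\theta(M') \ge 0$ for every quotient $M'$ since $M'$ also lies in $\T$ and the positive half-space is generated by $[T_i]$'s. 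The principal obstacle is verifying the compatibility of the silting reduction of Theorem \ref{reduction} with the Euler pairing, namely that the embedding $\mod B' \hookrightarrow \mod A$ realizing $\mod B'$ as the heart $\T \cap (\T')^{\perp}$ respects the Euler forms after projection; once this is secured, the entire proposition reduces to the trivial rank one observation that a nonzero linear form on $K_0(\proj B')_\mathbb{R}$ makes every nonzero $B'$-module semistable.
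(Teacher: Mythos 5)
The paper gives no proof of this proposition: it is recalled verbatim from \cite{BST,Y} (see e.g.\ the wall--and--chamber structure theorem of Br\"ustle--Smith--Treffinger and Yurikusa's identification of wide and semistable subcategories), so there is no internal proof to compare your attempt against. Evaluated on its own merits, your sketch has the right shape on one side and a genuine gap on the other.

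The inclusion $\Filt(B)\subseteq\W(C(L))$ is essentially correct, though the stated reasons are imprecise. What one actually needs is: for any $\theta=\sum a_i[L_i]$ with $a_i\ge 0$ and any quotient $M'$ of $M\in\Filt(B)$, one has $M'\in\T=\Fac H^0(T)$, so $\Hom(L_i,M'[1])=0$ (since $L_i\in\add T$), whence $\langle L_i,M'\rangle=\dim\Hom(L_i,M')\ge0$ and $\theta(M')\ge0$; and $\theta(M)=0$ follows because $M\in\T\cap\T'^{\perp}$ gives both $\Hom(L_i,M[1])=0$ (via $\T$ and $L_i\in\add T$) and $\Hom(L_i,M)\cong\Hom_A(H^0(L_i),M)=0$ (via $\T'^{\perp}$ and $L_i\in\add T'$). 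Your phrase ``the positive half-space is generated by $[T_i]$'s'' is not the reason; the reason is the vanishing of $\Hom(L_i,M'[1])$.

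The inclusion $\W(C(L))\subseteq\Filt(B)$ is where the substance lies and where your argument is not complete. You reduce to showing $\W(\theta_0)\subseteq\T\cap\T'^{\perp}$ for a single $\theta_0$ in the relative interior of $C(L)$, and then assert ``any $\theta_0$-stable module has $g$-vector orthogonal to $C(L)$, and a short argument shows such a module necessarily lies in $\T\cap\T'^{\perp}$.'' First, the object with a class orthogonal to $\theta_0$ is the dimension vector $[M]\in K_0(\mod A)$, not a $g$-vector, and $\theta_0(M)=0$ gives only one linear condition, not orthogonality to the full codimension-one cone $C(L)$. More importantly, the ``short argument'' is precisely the nontrivial content being cited: it amounts to identifying $\overline{\T}_{\theta_0}$ with $\Fac H^0(T)$ and $\overline{\F}_{\theta_0}$ with $\Sub H^{-1}(\nu T')$ for $\theta_0$ on the wall, which is exactly the theorem of \cite{BST,A2}. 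Your proposed fix --- transport the problem through the silting reduction of Theorem \ref{reduction} to a rank-one algebra $B'$ --- would work only if the reduction is known to be compatible with the stability pairing, i.e.\ that $\theta_0$-semistability inside the heart $\T\cap\T'^{\perp}$ matches $\bar\theta_0$-semistability in $\mod B'$. You flag this yourself as ``the principal obstacle'' and leave it unresolved; but that compatibility is itself a theorem of Asai \cite{A2}, so the argument ends up deferring the crux to another reference rather than proving it. In short, the reduction strategy is plausible but not self-contained, and as written the hard inclusion is assumed rather than established.
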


Using the above result, we can give the following corollary. 

\begin{cor}\label{semistable region}
\begin{itemize}
\item[(1)] We have $$\W(\TT_{\max}\cap \TT_{\ell}) = \W(\TT_{\min}\cap \TT_{1}')\ \textnormal{and} \ \  \W(\TT\cap \TT'_m) = \W(\TT_{\min}\cap \TT_{1}).$$
\item[(2)] 
For  arbitrary $1\leq i\leq\ell$ and  $1\leq j\leq m$, 
$\W(\TT_i\cap \TT_{i+1})$ and $\W(\TT_j'\cap \TT_{j+1}')$ are all distinct.
\end{itemize}
\end{cor}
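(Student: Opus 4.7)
The plan is to reduce everything to the brick labels via Proposition \ref{Yurikusa}. That result tells us that for each Hasse cover $\TT \covers \TT'$ in $\C(A)$, the semistable subcategory on the shared facet is $\W(\TT \cap \TT') = \Filt(B(\TT \to \TT'))$. Since two wide subcategories of the form $\Filt(B), \Filt(B')$ for bricks $B, B'$ are equal if and only if $B \cong B'$, both parts of the corollary reduce to comparing the relevant brick labels.

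For part (1), I would read off the four brick labels and obtain $\W(\TT_\max \cap \TT_\ell) = \Filt(B_{\ell+1})$, $\W(\TT_\min \cap \TT_1') = \Filt(B_1')$, $\W(\TT_\max \cap \TT_m') = \Filt(B_{m+1})$, and $\W(\TT_\min \cap \TT_1) = \Filt(B_1)$. The two claimed equalities then follow immediately from Lemma \ref{brick class}(2), which gives $B_{\ell+1} \cong B_1'$ and $B_{m+1} \cong B_1$.

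For part (2), the same translation yields $\W(\TT_i \cap \TT_{i+1}) = \Filt(B_{i+1})$ and $\W(\TT_j' \cap \TT_{j+1}') = \Filt(B_{j+1}')$, with the convention $B_{m+1}' := B_{m+1}$ when $j = m$ (and analogously $B_{\ell+1}$ when $i = \ell$). It therefore suffices to check $B_{i+1} \ncong B_{j+1}'$ for all $1 \le i \le \ell$ and $1 \le j \le m$. When both indices are interior, i.e.\ $i \le \ell-1$ and $j \le m-1$, the shifted indices $i+1$ and $j+1$ lie in $\{2,\dots,\ell\}$ and $\{2,\dots,m\}$ respectively, so Lemma \ref{simple difference} applies directly. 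For the edge cases I would use Lemma \ref{brick class}(2) to replace $B_{\ell+1}$ by $B_1'$ and $B_{m+1}$ by $B_1$: the case $i=\ell$, $j=m$ then reduces to $B_1 \ncong B_1'$, again a direct instance of Lemma \ref{simple difference}, while the mixed edge cases (say $i=\ell$, $j < m$) reduce to $B_1' \ncong B_{j+1}'$, which follows from the containment $B_1' \in \T_1' \subseteq \T_j'$ combined with $B_{j+1}' \in \T_{j+1}' \cap (\T_j')^\perp$, hence $B_{j+1}' \notin \T_j'$; the remaining mixed case is symmetric.

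The only delicate point in the whole argument is the bookkeeping of the three edge cases in part (2); beyond that, the proof is essentially a direct application of Proposition \ref{Yurikusa} together with Lemmas \ref{brick class} and \ref{simple difference}, with no new machinery required.
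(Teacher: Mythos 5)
Your proof is correct and matches the paper's approach for both parts: apply Proposition \ref{Yurikusa} to translate each $\W(\TT\cap\TT')$ into $\Filt$ of the corresponding brick label, then invoke Lemma \ref{brick class}(2) for the identifications in part (1) and Lemma \ref{simple difference} for the non-equalities in part (2). The one place where you go beyond the paper is the boundary bookkeeping in part (2): the paper's proof says only ``this is shown by the same argument of (1) and Lemma \ref{simple difference},'' which covers the shifted indices lying in $\{1,\dots,\ell\}\times\{1,\dots,m\}$ but does not explicitly address the cases where one side is the facet touching $\TT_{\max}$, since Lemma \ref{simple difference} compares $B_i$ with $B_j'$ and not two primed (or two unprimed) labels. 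Your supplementary argument — replace $B_{\ell+1}$ by $B_1'$ and $B_{m+1}'$ by $B_1$ via Lemma \ref{brick class}(2), then for the mixed case use $B_1'\in\T_1'\subseteq\T_j'$ while $B_{j+1}'\in(\T_j')^\perp$ — is correct and fills in exactly the detail that the paper's terse proof leaves implicit.
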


\begin{proof}
(1)  We will only show the first statement. The second one is similar. 
By Proposition \ref{Yurikusa}, we have $\W(\TT_{\max}\cap \TT_{\ell}))=\Filt( B_{\ell+1})$. 
Similarly, we have 
$\W(\TT_{\min}\cap\TT_{1}')=\Filt (B_{1}')$. Thus, by Lemma \ref{brick class}, we get the conclusion. 

(2)  This is shown by the same argument of (1) and Lemma \ref{simple difference}. 
\end{proof}

Finally we establish the following theorem.



\begin{thm}\label{main1}
\begin{itemize}
\item[(1)]

A shard gives precisely the semistable region of a brick. 
Moreover, we have a bijection between the set $\brick A$ of 
bricks  and the set $\shard A$ of shards by 
$$\xymatrix{\Theta(-):\brick A\ar[r]^{}&  \shard A.
}$$

\item[(2)]
We have the following commutative diagrams, and all maps are bijections 
\[
\begin{xy}
(-45,0)*{\jirr (\C(A))}="A",
(45,0)*{\shard A}="B",
(-45,-25)*{\brick A}="C",
(45,-25)*{\swide A.}="D",
(-45,0)*{}="E",
(45,0)*{}="F",
(-45,-25)*{}="G",
(45,-25)*{}="H",

\ar@<1ex> "A";"B"^{\Sigma(-)}
\ar@<1ex> "B";"A"^{\JJ(-)}
\ar@<1ex> "A";"C"^{B(-)}
\ar@<1ex> "C";"A"^{\T(-)}
\ar@<1ex> "B";"D"^{\W(-)}
\ar@<1ex> "D";"B"^{\Theta(-)}
\ar@<1ex> "C";"D"^{\Filt(-)}
\ar@<1ex> "D";"C"^{\simp(-)}
\end{xy}
\]
\end{itemize}
\end{thm}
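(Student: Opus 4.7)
The plan is to reduce the full diagram in (2) to the single statement that $\Theta\colon \brick A \to \shard A$ is well-defined and fits into the left square, i.e.\ $\Theta(B(j)) = \Sigma(j)$ for every $j \in \jirr A$. The other outer maps are already bijections: the brick labeling $B$ with inverse $\T$ is known from \cite{DIRRT}, the pair $\Sigma, \JJ$ is Theorem \ref{bij jirr and shard}, and the pair $\Filt, \simp$ is Theorem \ref{bij sbrick wide}. Once $\Theta$ is shown to commute with these, it is automatically a bijection, part (1) follows, and the map $\W\colon \shard A \to \swide A$ can simply be defined by $\W(\Sigma) := \Filt(B(\JJ(\Sigma)))$, making the right square commute tautologically.

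The heart of the proof is thus to verify $\Theta(B) = \Sigma(\T(B))$ set-theoretically. By Proposition \ref{semistable wide2}(3), $\Theta(B) = \Theta(\Filt(B))$, so $\theta \in \Theta(B)$ iff $B \in \W(\theta)$. I would first analyze this condition on small walls: for $\theta$ in the relative interior of a small wall $C(L)$ with $L \in \twopsilt^{n-1}A$, Proposition \ref{Yurikusa} gives $\W(\theta) = \Filt(B(\TT \to \TT'))$, where $\TT \covers \TT'$ is the unique covering relation with $\TT \cap \TT' = C(L)$. Hence $C(L)$ contributes to $\Theta(B)$ iff the brick label of the edge through $C(L)$ is $B$.

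Next I would compare the small walls arising in this way with those making up $\Sigma(\T(B))$. Via Corollary \ref{poset=shard}, the bijection $\Sigma \leftrightarrow \JJ$ of Theorem \ref{bij jirr and shard} matches shards with join-irreducibles along the brick labeling. To extend this matching from single covers to full shards, I apply the rank-$2$ reduction (Corollary \ref{cor rank2}) at each codimension-$2$ cone $\LL$: by Lemma \ref{brick class}, the bricks labeling the two sides of $\LL$ are $B_1, \ldots, B_{\ell+1}$ and $B_1', \ldots, B_{m+1}'$ with $B_1 \cong B_{m+1}'$ and $B_1' \cong B_{\ell+1}$, and by Lemma \ref{simple difference} all other bricks among these are pairwise distinct. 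Combined with Corollary \ref{semistable region}, this shows that two adjacent small walls share a shard (equivalently, their common codim-$2$ cone is not a cut point of the plate) if and only if they carry the same brick label. Iterating along chains of adjacent small walls via Lemma \ref{seq of poset}, the small walls of $\Sigma(\T(B))$ coincide exactly with those whose brick label is $B$.

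Finally, I must verify agreement on the lower-dimensional faces of $\Sigma(A)$. For a codim-$2$ face $\LL$, one has $\W(\theta) = \Filt(B_1, B_1')$ in its relative interior (by Lemma \ref{brick class} applied to the rank-$2$ reduction at $\LL$), so $B \in \W(\theta)$ iff $B \cong B_1$ or $B \cong B_1'$, matching precisely the two shards passing through $\LL$. The analogous dictionary propagates to faces of higher codimension by iterated reduction via Theorem \ref{reduction}; alternatively, since both $\Theta(B)$ and $\Sigma(\T(B))$ are closed subsets of $K_0(\proj A)_{\mathbb{R}}$ whose top-dimensional pieces are exactly the small walls matched above, the small-wall identification forces the full set-theoretic equality. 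The main obstacle will be the careful bookkeeping at codimension $2$: the rank-$2$ local picture of Corollary \ref{cor rank2} must be correlated simultaneously with shard-theoretic cutting and with the brick-labeled cover structure, via Lemma \ref{brick class}, Lemma \ref{simple difference}, and Corollary \ref{semistable region}. Once this local dictionary is in place, the global identification reduces to a patching argument along the fan.
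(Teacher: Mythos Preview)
Your proposal is correct and follows essentially the same route as the paper: both reduce the diagram to the single identity $\Theta(B(\TT)) = \Sigma(\TT)$ for $\TT \in \jirr A$, then establish it by starting from Proposition~\ref{Yurikusa} on the small wall $\TT\cap\TT^*$ and using the rank-$2$ local analysis at each codimension-$2$ face (Corollary~\ref{cor rank2}, Lemma~\ref{brick class}, Lemma~\ref{simple difference}, Corollary~\ref{semistable region}) to propagate semistability exactly along the shard. One small correction: $\W$ is already \emph{defined} as the semistable subcategory in Definition~\ref{semistable wide}(2), so you cannot simply redefine it as $\Filt(B(\JJ(-)))$; you must verify that the given $\W$ satisfies $\W(\Sigma)=\Filt(B(\JJ(\Sigma)))$, but this is immediate once $\Theta(B)=\Sigma$ is known, since $\W(\Sigma)\subseteq\W(C(L))=\Filt(B)$ for any small wall $C(L)\subset\Sigma$ by Proposition~\ref{Yurikusa}, while the reverse inclusion is exactly $\Sigma\subseteq\Theta(B)$.
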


\begin{proof}
(2) Recall that the upper horizontal maps and lower horizontal maps are both  bijections by Theorems \ref{bij jirr and shard} and \ref{bij sbrick wide}, respectively.
Moreover, we have a bijection 
$$\xymatrix{\jirr (\C(A))\ar@<0.5ex>[rr]^{B(-)}&
&\ar@<0.5ex>[ll]^{\T(-)} \brick A
}$$ 
by \cite[Theorem 3.3]{DIRRT},\cite[Theorem 1.5]{BCZ}. 
Thus the left vertical map is also bijection.

Let $\TT\in\jirr (\C(A))$ and $\TT\covers\TT^*$. 
We will show that the diagram is commutative, that is, 
$$\Theta(\Filt (B(\TT))=\Sigma(\TT).$$

We have $\Theta(\Filt (B(\TT)))=\Theta(B(\TT))$ by Proposition \ref{semistable wide2} and hence it is enough to show $\Theta(B(\TT))=\Sigma(\TT).$

Recall that $B(\TT)$ is semistable on $\TT\cap\TT^*$ by 
Proposition \ref{Yurikusa}. 
Consider the facet of $\TT\cap\TT^*$, which is codimension 2, and given as $\UU_1:=C(U_1)$ for some $U_1\in\twopsilt^{n-2}A$. 
If $\TT\cap\TT^*$ is a facet of the maximum or minimum cone on $\C(A,\UU_1)$, by definition of shards, 
then a small wall in $\C(A,\UU_1)$ adjacent to  $\TT\cap\TT^*$, which we denote by $\LL_1$, is contained in $\Sigma(\TT\cap\TT^*)$. 
In this case, by Corollary \ref{semistable region}, 
$B(\TT)$ is also semistable on $\LL_1$. 

Next we take a facet of $\LL_1$ which is not $\UU_1$, and it is given as $\UU_2:=C(U_2)$ for some $U_2\in\twopsilt^{n-2}A$. 
If $\LL_1$ is a facet of the maximum or minimum cone on $\C(A,\UU_2)$, then 
a small wall in $\C(A,\UU_2)$ adjacent to  $\LL_1$, which we denote by $\LL_2$, is contained in $\Sigma(\TT\cap\TT^*)$. 
Similarly, $B(\TT)$ is also semistable on $\LL_2$. 
We can repeat this argument for small walls 
$\LL_1,\cdots,\LL_k$ of $\Sigma(\TT\cap\TT^*)$, where $\LL_i$ and $\LL_{i+1}$ are adjacent. 
Thus we conclude that $B(\TT)$ is semistable on 
$\Sigma(\TT\cap\TT^*)$ and $\Theta(B(\TT))\supset\Sigma(\TT).$ 

Take a small wall $\LL_{k+1}$ on $H(\TT\cap\TT^*)$, which is adjacent to $\LL_k$ and $\LL_{k+1}\notin\Sigma(\TT\cap\TT^*)$. 
Let $\UU:=\LL_k\cap\LL_{k+1}$, which is an $(n-2)$ dimensional cone. 
Then, by the assumption, $\LL_k$ is not a facet of the maximum or minimum cone on $\C(A,\UU)$ and 
Corollary \ref{semistable region} implies that $B(\TT)$ is not semistable on $\LL_{k+1}$. 
Thus, $B(\TT)$ is never semistable on 
the outside of $\Sigma(\TT\cap\TT^*)$ and hence $\Theta(B(\TT))=\Sigma(\TT)$. 

Hence 
the map $\Theta(-):\swide A \to\shard A$ is well-defined and hence, the map is bijection. 
Since $\Theta(\Filt(-))=\Theta(-)$, (1) immediately follows from (2).
\end{proof}

\section{Shard intersections}
In this section we discuss shard intersections and their several properties. 
One of the main results in this section is a poset isomorphism between shard intersections and wide subcategories. 
Moreover, we study several properties of shard intersections. 

\begin{defi}\label{defi shard int}
Let $\shardint A$ be the set of arbitrary intersections of shards. 
We regard the entire space $V:=K_0(\proj A)_{\mathbb{R}}$ is the intersection of the empty set of shards. 
Then $\shardint A$ is a poset by inclusion and it is lattice. 
The unique maximal element is $V$ and the unique minimal element is the intersection of the set of all shards, which is the origin $0$. 
The meet operation is intersection and the join operation $\Gamma_1$ and $\Gamma_2$ is given by the intersection of all shards containing $\Gamma_1$ and $\Gamma_2$.
\end{defi}

We remark in \cite{Re3} the partial order of $\shardint A$ is defined by reverse containment, so that it is opposite of our partial order.

\begin{exam}
Let $A$ be a rank 2 $g$-finite algebra. 
We use the same terminology of Example \ref{exam1}. 
Then the Hasse quiver of $\shardint A$ is given by 
$$\xymatrix@R=7pt{&&&V\ar[llldd]\ar[lldd]\ar[ldd]\ar[dd]\ar[ddr]\ar[ddrr]\ar[ddrrr]\ar[ddrrrr]&&&&\\
&&&&&&&\\
C_1\ar[rrrdd]&\ar[rrdd]&\cdots\ar[rdd]&C_m\ar[dd]&C_1'\ar[ddl]&\ar[lldd]\cdots&\ar[llldd]&C_\ell'\ar[ddllll]\\
&&&&&&&\\
&&&0.&&&&\\
}$$
\end{exam}

To discuss our main result, we recall the result of 
\cite[Propsition 3.13]{AHIKM} (which was also discussed in \cite{DIRRT}, \cite{BCZ}, \cite{A1}). 

\begin{thm}\label{ahikm}
We have a bijection 
$$\xymatrix{\C (A)\ar@<0.5ex>[rr]^{\{B(-)\}}&
&\ar@<0.5ex>[ll]^{\vee\{\T(-)\}} \sbrick A,
}$$ 
where we define $\{B(\TT)\}:=\{B(\TT\to\TT')\ |\ \TT\covers\TT'\}$ and 
$\vee\{\T(\sS)\}:=\bigvee\{\T(B_i)\}_{i=1}^\ell$ for 
$\sS:=\{B_1,\ldots,B_\ell\}\in\sbrick A$.
\end{thm}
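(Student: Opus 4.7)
The plan is to derive Theorem \ref{ahikm} by combining the bijection $\T(-):\fLsbrick A \to \ftors A$ of Theorem \ref{poset iso} with the description of brick labels at covering arrows as simples of the associated wide subcategory. Since $A$ is $g$-finite, Theorem \ref{finite brick} yields $\sbrick A = \fLsbrick A$ and $\tors A = \ftors A$, so it suffices to verify that the two maps $\{B(-)\}$ and $\vee\{\T(-)\}$ coincide with the inverse pair of this bijection.

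First I would confirm both maps are well-defined. For $\TT \in \tors A$, each label $B(\TT \to \TT')$ is a brick by the brick-labeling Definition-Theorem, and for distinct covers $\TT \covers \TT_1$ and $\TT \covers \TT_2$ Proposition \ref{Yurikusa} gives $\W(\TT \cap \TT_i) = \Filt(B(\TT \to \TT_i))$ for $i = 1,2$. Thus each $B(\TT \to \TT_i)$ is a simple object of the left wide subcategory $\Filt\{B(\TT)\}$, and simples in an abelian subcategory have no nonzero homomorphisms between non-isomorphic ones, so $\{B(\TT)\} \in \sbrick A$. Conversely, for $\sS = \{B_1,\dots,B_\ell\} \in \sbrick A$, each $\T(B_i)$ is a functorially finite torsion class by $g$-finiteness and Theorem \ref{finite brick}, and $\bigvee_i \T(B_i)$ exists because $\tors A$ is a lattice by Theorem \ref{lattice}.

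Next I would verify that the two compositions are the identity. Starting from $\TT \in \tors A$ and setting $\sS := \{B(\TT)\}$, the inclusion $\bigvee_i \T(B_i) \subseteq \TT$ is immediate since each $B_i \in \TT$; for the reverse, Theorem \ref{bij sbrick wide} identifies $\sS$ with the simple objects of $\Filt(\sS)$, and via brick labeling this wide subcategory determines $\TT$ as the smallest torsion class with the prescribed simples. Starting from $\sS \in \sbrick A$ and setting $\TT := \bigvee_i \T(B_i)$, the equality $\{B(\TT)\} = \sS$ follows from the uniqueness in Theorem \ref{bij sbrick wide}, since both semibricks determine the same wide subcategory $\Filt(\sS)$.

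The main obstacle is the identification of the brick labels $\{B(\TT \to \TT') \mid \TT \covers \TT'\}$ with the complete set of simples of the left wide subcategory associated to $\TT$; this relies on the structural analysis of $\Hasse A$ developed in \cite{DIRRT}, \cite{BCZ}, and \cite{A1}, and our argument merely assembles these ingredients following \cite[Proposition 3.13]{AHIKM}.
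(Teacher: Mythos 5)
The paper does not actually prove Theorem~\ref{ahikm}: it states it as a recollection, citing \cite[Proposition 3.13]{AHIKM} together with \cite{DIRRT}, \cite{BCZ}, \cite{A1}. So there is no ``paper proof'' to compare with; what matters is whether your derivation from the surrounding results in the paper is sound.

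Your overall plan is the right one: invoke Theorem~\ref{finite brick} to replace $\fLsbrick A$ and $\ftors A$ by $\sbrick A$ and $\tors A$, and then match the pair $(\{B(-)\},\vee\{\T(-)\})$ with the bijection $\T(-):\fLsbrick A\to\ftors A$ of Theorem~\ref{poset iso}. You also correctly observe that $\bigvee_i \T(B_i)$ is the same as the smallest torsion class $\T(\sS)$ containing $\sS$. However, two steps as written have genuine gaps. First, your argument that $\{B(\TT)\}$ is a semibrick is circular: Proposition~\ref{Yurikusa} gives wideness of $\Filt(B(\TT\to\TT_i))$ for each \emph{single} label, but to conclude that $\Filt\{B(\TT)\}$ (all labels together) is wide and that the $B(\TT\to\TT_i)$ are exactly its simples, one must already know that $\{B(\TT)\}$ is a semibrick, by Theorem~\ref{bij sbrick wide}. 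The correct route is to identify $\{B(\TT)\}$ with $\simp(\TT\cap(\TT_\downarrow)^\perp)$ where $\TT_\downarrow=\bigwedge\{\TT':\TT\covers\TT'\}$, and cite \cite{AP} or \cite{A1} for the fact that this wide subcategory has exactly these simples. Second, the reverse inclusion $\TT\subseteq\bigvee_i\T(B_i)$ is asserted, not proved: the phrase ``determines $\TT$ as the smallest torsion class with the prescribed simples'' presupposes the very bijection you are trying to establish. This needs an actual argument (for example, that the labels below $\TT$ generate $\TT$ as a torsion class, which is one of the main results of \cite{DIRRT} or \cite{A1}). To your credit, your final paragraph acknowledges that the crux is precisely this identification of labels with the simples of the left wide subcategory and that you are deferring to the cited literature; but as a self-contained proof, the two steps above are not closed.
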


Then we give the following theorem.

\begin{thm}\label{bij jirr and shard2}
\begin{itemize}
\item[(1)] 
There exists an anti-isomorphism 

$$\xymatrix{\wide A\ar@<0.5ex>[rr]^(.45){\Theta(-)}&
&\ar@<0.6ex>[ll]^(.55){\W(-)} \shardint A.
}$$ 
\item[(2)]
We have the following commutative diagrams, and all maps  are 
bijections  
\[
\begin{xy}
(-45,0)*{\C (A)}="A",
(45,0)*{\shardint A}="B",
(-45,-25)*{\sbrick A}="C",
(45,-25)*{\wide A}="D",
(-45,0)*{}="E",
(45,0)*{}="F",
(-45,-25)*{}="G",
(45,-25)*{}="H",

\ar@<1ex> "A";"B"^{\bigcap\Sigma(-)}
\ar@<1ex> "B";"A"^{\bigvee\JJ(-)}
\ar@<1ex> "A";"C"^{\{B(-)\}}
\ar@<1ex> "C";"A"^{\vee \TT(-)}
\ar@<1ex> "B";"D"^{\W(-)}
\ar@<1ex> "D";"B"^{\Theta(-)}
\ar@<1ex> "C";"D"^{\Filt(-)}
\ar@<1ex> "D";"C"^{ \simp(-)}
\end{xy}
\]
where we define $\bigcap\Sigma(\TT):=\bigcap_{\TT{\,\,\cdot\!\! >\,\,}\TT'}\Sigma(\TT\covers\TT')$ and $\bigvee\JJ(\Gamma):=
\bigvee_{\Sigma\supset\Gamma}\JJ(\Sigma)$.

\end{itemize}
\end{thm}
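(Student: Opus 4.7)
My approach is to prove part (1) first, then deduce part (2) by combining it with the bijections established in Theorems \ref{ahikm}, \ref{bij sbrick wide}, and \ref{main1}.

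For part (1), I first check that $\Theta$ and $\W$ restrict to well-defined maps between $\wide A$ and $\shardint A$. Given $\Gamma = \bigcap_{i} \Sigma_i \in \shardint A$, the equality $\W(\Gamma) = \bigcap_i \W(\Sigma_i)$ exhibits $\W(\Gamma)$ as an intersection of wide subcategories, hence $\W(\Gamma) \in \wide A$. Conversely, for $\W \in \wide A$ with simple objects $B_1, \ldots, B_\ell$, Proposition \ref{semistable wide2}(3) together with Theorem \ref{main1} gives
\[
\Theta(\W) = \bigcap_{i=1}^\ell \Theta(B_i) \in \shardint A.
\]
By Proposition \ref{semistable wide2}(1), $\Theta$ and $\W$ form an antitone Galois connection, and by Proposition \ref{semistable wide2}(2) we have $\Im \W = \wide A$. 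Standard Galois-connection facts then yield mutually inverse order-reversing bijections between $\Im \W$ and $\Im \Theta$, so it suffices to show $\Im \Theta = \shardint A$; the inclusion $\Im \Theta \subseteq \shardint A$ has just been verified.

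The step I expect to be the main obstacle is the reverse inclusion, for which I plan to show $\Theta(\W(\Gamma)) = \Gamma$ for every $\Gamma = \bigcap_i \Sigma_i \in \shardint A$. The inclusion $\supseteq$ is immediate from the Galois connection. For $\subseteq$, pick $\theta \notin \Gamma$; then $\theta \notin \Sigma_{i_0}$ for some $i_0$, and Theorem \ref{main1} produces a brick $B_{i_0}$ with $\Theta(B_{i_0}) = \Sigma_{i_0}$. Since $\Gamma \subseteq \Sigma_{i_0} = \Theta(B_{i_0})$, the brick $B_{i_0}$ is $\theta'$-semistable for every $\theta' \in \Gamma$, hence $B_{i_0} \in \W(\Gamma)$; but $B_{i_0}$ is not $\theta$-semistable, witnessing $\theta \notin \Theta(\W(\Gamma))$. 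This completes part (1).

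For part (2), the horizontal bijections $\{B(-)\}$ and $\vee \TT(-)$ come from Theorem \ref{ahikm} via $\C(A) \cong \tors A$, while $\Filt$ and $\simp$ come from Theorem \ref{bij sbrick wide}; the right vertical bijection is part (1). To close the diagram I verify commutativity of the top-right square: using Proposition \ref{semistable wide2}(3) and Theorem \ref{main1} (noting that $\Theta(B(\TT \covers \TT')) = \Sigma(\TT \covers \TT')$, since each is the unique shard containing $\TT \cap \TT'$), I compute
\[
\Theta\bigl(\Filt\{B(\TT \covers \TT')\}_{\TT \covers \TT'}\bigr) = \bigcap_{\TT \covers \TT'} \Theta(B(\TT \covers \TT')) = \bigcap_{\TT \covers \TT'} \Sigma(\TT \covers \TT') = \bigcap \Sigma(\TT).
\]
Thus $\bigcap \Sigma(-) = \Theta \circ \Filt \circ \{B(-)\}$ is a composition of bijections, and the full commutativity of the diagram, together with the identification $\bigvee \JJ(-) = \vee \TT \circ \simp \circ \W$, then follows formally.
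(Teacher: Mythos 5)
Your proof is correct and follows essentially the paper's approach: both reduce part~(1) to showing $\Im\Theta = \shardint A$ via Proposition~\ref{semistable wide2} and Theorem~\ref{main1}, and both deduce part~(2) by combining part~(1) with Theorems~\ref{ahikm} and~\ref{bij sbrick wide} and checking commutativity. One small slip: the identity $\W\bigl(\bigcap_i\Sigma_i\bigr)=\bigcap_i\W(\Sigma_i)$ you invoke to see that $\W(\Gamma)$ is wide is false in general, since the left-hand side equals $\bigcap_{\theta\in\bigcap_i\Sigma_i}\W(\theta)$ while the right-hand side equals $\bigcap_{\theta\in\bigcup_i\Sigma_i}\W(\theta)$; for instance if the $\Sigma_i$ meet only at the origin then $\W(\Gamma)=\mod A$, whereas $\bigcap_i\W(\Sigma_i)$ is typically a proper wide subcategory. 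This does not damage the argument, because $\W(\Gamma)$ is already a wide subcategory directly from Definition~\ref{semistable wide}(2) (any $\W(\RR)$ is an intersection of the wide subcategories $\W(\theta)$ over $\theta\in\RR$), so no intersection formula is needed. Your pointwise argument for $\Im\Theta\supseteq\shardint A$ (showing $\Theta(\W(\Gamma))=\Gamma$ by producing, for each $\theta\notin\Gamma$, a brick $B_{i_0}\in\W(\Gamma)$ that is not $\theta$-semistable) is a mild reformulation of the paper's more direct construction of a module $X=B_1\oplus\cdots\oplus B_m$ with $\Theta(X)=\bigcap_i\Sigma_i$; both hinge on the brick--shard bijection and Proposition~\ref{semistable wide2}(3).
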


\begin{proof}
(1) By Proposition \ref{semistable wide2}, 
we have an anti-isomorphism $\Im\Theta\cong\Im W=\wide A$.
Hence it is enough to show $\Im \Theta=\shardint A$. 
Let $\Cc \subset\mod A$ be a subcategory. 
Then $\Theta(\Cc)=\Theta (\W (\Theta(\Cc)))$. 
Since $\W (\Theta(\Cc))$ is wide, 
there exists a 
semibrick $\{B_1,\cdots,B_\ell\}$ such that $W (\Theta(\Cc))=\Filt(B_1,\ldots,B_\ell)$.
Moreover, Theorem \ref{main1} implies that there exists a shard $\Sigma_i$ such that $\Sigma_i=\Theta(B_i)$. 
Thus, Proposition \ref{semistable wide2} implies $$\Theta (\W (\Theta(\Cc)))=\Theta (\Filt(B_1,\ldots,B_\ell))=\bigcap_{i=1}^\ell\Theta(B_i)=\bigcap_{i=1}^\ell\Sigma_i$$ and hence $\Im\Theta\subset\shardint A$. 

On the other hand, let  ${\bigcap}_{i=1}^m\Sigma_i\in\shardint A$. 
Then Theorem \ref{main1} implies that there exists a brick $B_i$ such that $\Sigma_i=\Theta(B_i)$. 
Then, for $X:=B_1\oplus\cdots\oplus B_m$, 
we have $\Theta(X)=\bigcap_{i=1}^m\Theta(B_i)=\bigcap_{i=1}^m{\Sigma_i}$ and hence $\Im\Theta\supset\shardint A$.

(2) Left vertical maps, lower horizontal maps and right vertical maps are all bijections by Theorem \ref{ahikm}, Theorem \ref{bij sbrick wide} and (1), respectively. 
Thus, the commutativity of Theorem \ref{main1} implies the commutativity $\bigcap\Sigma(-)=\Theta\circ\Filt\circ\{B(-)\}$ and the map $\bigcap\Sigma(-)$ is bijection. 
We will show that $\bigvee\JJ\circ\bigcap\Sigma(-)$ is the identity map. 

Let $\TT\in\C(A)$ and 
$\TT=\bigvee_{i=1}^\ell\JJ(\Sigma_i)$ be a canonical join representation, 
where $\Sigma_i$ runs over all lower shards of $\TT$.  
Let $\Gamma:=\bigcap\Sigma(\TT)$ and we will show $\TT=\bigvee\JJ(\Gamma)$.  
Since $\Gamma\subset\Sigma_i$, it is enough to show that 
for any shard $\Sigma$ containing $\Gamma$, we have $\JJ(\Sigma)\leq \TT$.

For each $\Sigma_i$, the shard contains a facet of the region $\TT$. 
Thus $\Gamma$ contains the face $F$ of $\TT$ obtained by intersecting the facets of $\TT$ separating $\TT$
from regions $\TT'$ having $\TT\cdot\!\! >\,\TT'$. 
Then any shard $\Sigma$ containing $\Gamma$ contains $F$. Hence there is a region $\PP\in U(\Sigma)$ such that $\PP$ contains $F$. 
By Theorem \ref{reduction}, 
the set $I:=\{\TT\in\C(A)\mid F\subset \TT\}$
is an interval in $\C(A)$, and $\TT$ is the maximal element of $I$. 
Therefore we have $\PP\in I$ and $\JJ(\Sigma)\leq\PP\leq \TT$. 
Consequently $\bigvee\JJ\circ\bigcap\Sigma(-)$ is the identity map. 
\end{proof}

As a corollary, we get the following result, which can be regarded as a refinement of \cite{T1}. Note that it is also shown in \cite[Corollary 4.33]{En} in a different formulation 
(see also Proposition \ref{poset iso wide shard} below).

\begin{cor}
Let $\Delta$ be a Dynkin graph, $\Pi_\Delta$ the preprojective algebra of $\Delta$ and $\Sigma(\Delta)$ the Coxeter fan of $\Delta$. 
Then the shard intersection poset of $\Sigma(\Delta)$ is anti-isomorphic to
$\wide\Pi_\Delta$.
\end{cor}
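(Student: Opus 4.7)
The plan is to deduce this as a direct specialization of Theorem \ref{bij jirr and shard2}(1) to the algebra $A = \Pi_\Delta$. The only thing that needs checking is that the hypotheses of that theorem apply in this setting, namely that $\Pi_\Delta$ is $g$-finite so that $\Sigma(\Pi_\Delta)$ is complete and all the machinery of Sections 2--4 is available.

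First I would invoke the cited result \cite[Theorem 8.4]{AHIKM} which identifies $\Sigma(\Pi_\Delta)$ with the Coxeter fan $\Sigma(\Delta)$ associated to the Dynkin diagram $\Delta$. Since $\Delta$ is of Dynkin type, the corresponding Weyl group is finite and the Coxeter fan is a complete hyperplane arrangement fan in $K_0(\proj \Pi_\Delta)_{\mathbb{R}}$. By Proposition \ref{property of g-cone}(4), completeness of $\Sigma(\Pi_\Delta)$ is equivalent to $g$-finiteness of $\Pi_\Delta$, so the standing hypothesis of Sections 3 and 4 is satisfied.

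Next I would note that because $\Sigma(\Pi_\Delta)$ is an honest hyperplane arrangement, the notion of shard introduced in this paper coincides with the classical notion of shard of \cite{Re2,Re3} for the Coxeter arrangement, as explained in Example \ref{exam1}(2). Consequently $\shardint \Pi_\Delta$ is exactly the poset of shard intersections of the Coxeter fan $\Sigma(\Delta)$ in the classical sense.

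Finally I would apply Theorem \ref{bij jirr and shard2}(1) directly to $A = \Pi_\Delta$, which yields the poset anti-isomorphism
\[
\Theta(-) : \wide \Pi_\Delta \longrightarrow \shardint \Pi_\Delta,
\]
with inverse $\W(-)$. Combining this with the identification of $\shardint \Pi_\Delta$ with the shard intersections of $\Sigma(\Delta)$ gives the claim. There is no genuine obstacle here; the statement is an unpacking of Theorem \ref{bij jirr and shard2}(1) in the hyperplane arrangement setting, and the only verification required is that the preprojective algebra of Dynkin type satisfies the $g$-finiteness hypothesis, which is immediate from completeness of the Coxeter fan.
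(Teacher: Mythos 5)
Your proposal is correct and follows essentially the same route as the paper's proof: identify $\Sigma(\Pi_\Delta)$ with the Coxeter fan via \cite[Theorem~8.4]{AHIKM} and then apply Theorem~\ref{bij jirr and shard2}(1). You simply spell out the intermediate verifications (completeness of the Coxeter fan gives $g$-finiteness by Proposition~\ref{property of g-cone}(4); the paper's shards agree with Reading's classical shards in the hyperplane-arrangement case by Example~\ref{exam1}(2)) which the paper leaves implicit.
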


\begin{proof}
By \cite[Theorem 8.4]{AHIKM}, we have $\Sigma(\Pi_\Delta)$ coincides with $\Sigma(\Delta)$. 
Thus Theorem \ref{bij jirr and shard2} implies the conclusion. 
\end{proof}


We fix a 2-term presilting complex $U$ and 
the corresponding face $\UU:=C(U)$ of $\Sigma(A)$. 

\begin{prop}\label{wide surj}
\begin{itemize}
\item[(1)] Let $\W(U):={}^{\perp}H^{-1}(\nu U)\cap\Fac(H^0(U))^{\perp}$. 
Then $$\W(U)=\W(\UU)$$ and $\sharp\simp(\W(U))=|A|-|U|$. Thus it is a semistable wide subcategory. Moreover, 
there exists a finite dimensional algebra $B$ such that 
equivalence $\W(U)\cong\mod B$. 
\item[(2)] 
There exists surjection $\twopsilt A\to \wide A$, 
$U\mapsto \W(U)$.
\end{itemize}
\end{prop}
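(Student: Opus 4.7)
The plan is to prove (1) first by identifying $\W(U)$ as the ``heart'' of the torsion interval determined by $U$ and then transferring known structure from the reduction. By Proposition~\ref{max in n-2}, the torsion interval $[\T(U)_{\min},\T(U)_{\max}]$ is $[\Fac(H^{0}(U)),{}^{\perp}H^{-1}(\nu U)]$ under the indicated identifications, so that $\W(U)={}^{\perp}H^{-1}(\nu U)\cap\Fac(H^{0}(U))^{\perp}$ is precisely $\T(U)_{\max}\cap\T(U)_{\min}^{\perp}$. By Theorem~\ref{reduction}, there is a finite dimensional algebra $B$ with $|B|=|A|-|U|$ and a poset isomorphism $\twosilt_{U}A\cong\twosilt B$; via Theorem~\ref{poset iso} this upgrades to a lattice isomorphism $[\T(U)_{\min},\T(U)_{\max}]\cong\tors B$, induced by an exact equivalence $\W(U)\simeq\mod B$ ($\tau$-tilting reduction). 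In particular $\#\simp(\W(U))=|B|=|A|-|U|$, and the final sentence of (1) follows.

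The heart of (1) is then $\W(U)=\W(\UU)$. For $\W(U)\subseteq\W(\UU)$, the strategy is to apply Lemma~\ref{brick class} iteratively inside $[\T(U)_{\min},\T(U)_{\max}]$ (or equivalently, to use the equivalence $\W(U)\simeq\mod B$) so that each simple $B_{i}$ of $\W(U)$ is a brick whose associated shard $\Theta(B_{i})$ from Theorem~\ref{main1} contains $\UU$. By Proposition~\ref{semistable wide2}(3) this gives $\UU\subseteq\bigcap_{i}\Theta(B_{i})=\Theta(\W(U))$, which is exactly the claim that every module in $\W(U)=\Filt(B_{1},\ldots,B_{\ell})$ is $\theta$-semistable for each $\theta\in\UU$. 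For the reverse inclusion, a module semistable on all of $\UU$ is in particular semistable at the generic point $\theta=\sum a_{i}[U_{i}]$ with every $a_{i}>0$; expanding $\theta(M)=\sum a_{i}\langle U_{i},M\rangle$ and the analogous inequalities for quotients forces the vanishing/positivity conditions cutting out $\T(U)_{\max}\cap\T(U)_{\min}^{\perp}$. Since the resulting subcategory is wide, it agrees with $\W(U)$ by the Galois connection of Proposition~\ref{semistable wide2}(1).

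For part (2), surjectivity is essentially a corollary of (1) combined with Theorem~\ref{bij jirr and shard2}. Given $\W\in\wide A$, set $\Gamma:=\Theta(\W)\in\shardint A$, so $\W=\W(\Gamma)$ by the anti-isomorphism of Theorem~\ref{bij jirr and shard2}(1). Choose a point $\theta$ in the relative interior of $\Gamma$; since $\Sigma(A)$ is complete (Proposition~\ref{property of g-cone}) and every shard is a union of small walls lying on a common hyperplane, $\theta$ lies in the relative interior of a unique cone $\UU=C(U)$ of $\Sigma(A)$ contained in $\Gamma$. Part (1) then gives $\W(U)=\W(\UU)$, and semistability is constant on $\mathrm{relint}(\UU)$, so $\W(\UU)=\W(\Gamma)=\W$.

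The main obstacle will be the careful verification that every simple $B_{i}$ of $\W(U)$ is semistable on all of $\UU$, since this extends the brick/shard matching of Theorem~\ref{main1} from codimension one to the arbitrary codimension face $\UU$. The most transparent route is induction on $|U|$ using the reduction $\W(U)\simeq\mod B$: the bricks of $\W(U)$ are identified with those of $\mod B$, and their shards in $K_{0}(\proj B)_{\R}$ pull back along the projection $K_{0}(\proj A)_{\R}\twoheadrightarrow K_{0}(\proj A)_{\R}/\R\UU\cong K_{0}(\proj B)_{\R}$ to closed regions in $K_{0}(\proj A)_{\R}$ containing $\UU$, which are exactly the required semistable regions. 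Once this compatibility of semistability with reduction is in hand, both parts of the proposition fall out of the established machinery.
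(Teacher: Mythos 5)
The paper's own ``proof'' is a one-line citation to \cite{BST}, \cite{J}, and \cite{Y}, and those references supply exactly the three facts you need: Jasso's $\tau$-tilting reduction gives the equivalence $\W(U)\simeq\mod B$ with $|B|=|A|-|U|$; Br\"ustle--Smith--Treffinger and Yurikusa establish that the semistable category $\W(\theta)$ is constant on the relative interior of each cone of $\Sigma(A)$ and equals ${}^\perp H^{-1}(\nu U)\cap \Fac(H^0(U))^\perp$ on $\mathrm{relint}\,C(U)$, which gives $\W(U)=\W(\UU)$; and then (2) is the statement that every wide subcategory arises this way, which is also in \cite{Y}. Your proposal takes a genuinely different route, attempting to re-derive the wall-and-chamber input internally from the paper's shard machinery, and the first paragraph (identifying $\W(U)$ with $\T(U)_{\max}\cap\T(U)_{\min}^\perp$, invoking Proposition~\ref{max in n-2} and Theorem~\ref{reduction}) is fine.

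The problem is that the central step is not actually carried out, and it is hard to carry out without quietly re-importing the cited results. The claim that each simple brick $B_i$ of $\W(U)$ has $\Theta(B_i)\supseteq\UU$ is the assertion $\W(U)\subseteq\W(\UU)$ itself (restricted to simples), so reducing to it is no progress; the induction/pullback argument you sketch (``shards of $\mod B$ pull back along $K_0(\proj A)_\R\twoheadrightarrow K_0(\proj B)_\R$ to semistable regions containing $\UU$'') is precisely the compatibility of King stability with $\tau$-tilting reduction that \cite{BST,J,Y} prove. Moreover, you apply Lemma~\ref{brick class} ``iteratively'', but that lemma is stated only for $U\in\twopsilt^{n-2}A$; the general statement is exactly Theorem~\ref{shard int and gamma}, which appears \emph{after} Proposition~\ref{wide surj} and whose proof uses it, so invoking it here would be circular. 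The same circularity risk appears in your argument for (2): the existence of a face $\UU\subseteq\Gamma$ with $\dim\UU=\dim\Gamma$ for a shard intersection $\Gamma$ is the content of Proposition~\ref{containg shard}, which again depends on Proposition~\ref{wide surj}. Finally, the reverse inclusion $\W(\UU)\subseteq\W(U)$ via a ``generic point'' $\theta=\sum a_i[U_i]$ is plausible but not proved; spelling out why semistability at one interior $\theta$ forces membership in $\T(U)_{\max}\cap\T(U)_{\min}^\perp$ is again the [BST]/[Y] computation. In summary: the conceptual skeleton is right, but without filling the gap you flag as the ``main obstacle'' the argument is incomplete, and the most economical fix is simply to cite \cite{BST,J,Y} as the paper does.
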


\begin{proof}
The statement is the consequence of \cite{BST} and \cite{Y}, where 
they gave an explicit relationship between semistable regions and the associated wide subcategories. 
For the convenience of the reader, we provide a brief explanation. 
\cite[Theorem 3.14]{BST} and \cite[Theorem 1.4]{Y} show that the semistable subcategories given by two-term presilting complexes can be obtained as wide subcategories associated to the two-term presilting complexes. 
It implies $\W(U)=\W(\UU)$ and (2). Moreover the rank of 
the wide subcategories also explicitly given by \cite[Theorem 3.14]{BST}, which completes (1). 
\end{proof}

The following definition provides a canonical description of the shard intersections. 

\begin{defi}
Let $\UU$ be a face of $\Sigma(A)$.
We define 
$$\Gamma(\UU):=\bigcap\{\Sigma(\TT(\UU)_{\max}\covers \TT)|\ \TT(\UU)_{\min}\leq\TT\covered\TT(\UU)_{\max}\},$$
where $\TT(\UU)_{\max}$ (resp. $\TT(\UU)_{\min}$) the maximum chamber (resp. the minimum chamber) of $\C(A,\UU)$. 
Note that it is clearly a shard intersection. 
\end{defi}

\begin{thm}\label{shard int and gamma}
In the above setting, we have 
$$\W(U)=\W(\Gamma(\UU)).$$
\end{thm}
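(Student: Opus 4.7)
The plan is to reduce the claim to a computation of $\Theta(\W(U))$ and then invoke the antitone Galois connection from Proposition \ref{semistable wide2}(1). Since $\W(U)$ is wide by Proposition \ref{wide surj}, it lies in $\Im \W$, so $\W(\Theta(\W(U))) = \W(U)$. Hence it suffices to establish the identity $\Theta(\W(U)) = \Gamma(\UU)$; applying $\W$ then delivers $\W(U) = \W(\Gamma(\UU))$.

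To obtain this identity, I first pin down the simple objects of $\W(U)$. By Proposition \ref{wide surj} together with Theorem \ref{reduction}, there is a finite dimensional algebra $B$ with $|B| = |A| - |U|$, an equivalence $\W(U) \simeq \mod B$, and a lattice isomorphism $[\T(U)_{\min}, \T(U)_{\max}] \cong \tors B$. The generalization of Lemma \ref{brick class}(1) to arbitrary codimension ensures this lattice isomorphism preserves brick labelling. Under the equivalence, the simple $B$-modules correspond to the Hasse covers of $\mod B$ in $\tors B$, and translating back yields $\simp(\W(U)) = \{B_1, \dots, B_k\}$, where $B_i := B(\T(U)_{\max} \to \T_i)$ and $\T_i$ runs over the Hasse covers of $\T(U)_{\max}$ inside the interval, i.e., those $\T_i$ with $\T(U)_{\min} \leq \T_i \covered \T(U)_{\max}$. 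Thus $\W(U) = \Filt(B_1, \ldots, B_k)$.

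With the simples identified, Proposition \ref{semistable wide2}(3) gives $\Theta(\W(U)) = \bigcap_i \Theta(B_i)$. For each $i$, writing $\JJ_i := \gamma(\T(U)_{\max} \to \T_i)$ for the join-irreducible supplied by Theorem \ref{T2,RST}, the identity $B_i = B(\JJ_i)$ recalled in the proof of Corollary \ref{poset=shard} together with Theorem \ref{main1} and that corollary gives $\Theta(B_i) = \Sigma(\JJ_i) = \Sigma(\T(U)_{\max} \covers \T_i)$. Therefore $\Theta(\W(U)) = \bigcap_i \Sigma(\T(U)_{\max} \covers \T_i) = \Gamma(\UU)$ by the very definition of $\Gamma(\UU)$, and the proof closes.

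The main obstacle is the precise identification of $\simp(\W(U))$ with the brick labels of the Hasse covers of $\T(U)_{\max}$ in the interval: this requires upgrading Lemma \ref{brick class} from codimension two to arbitrary codimension and verifying that the reduction equivalence $\W(U) \simeq \mod B$ is compatible both with the lattice isomorphism of Theorem \ref{reduction} and with brick labelling on either side. Once this compatibility is granted, the remainder is a short formal manipulation using the Galois connection of Proposition \ref{semistable wide2} and the shard/brick correspondence of Theorem \ref{main1}.
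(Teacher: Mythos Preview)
Your proof is correct and follows essentially the same route as the paper's. Both arguments hinge on identifying $\simp(\W(U))$ with the brick labels $\{B(\T(U)_{\max}\to\T_i)\}$ of the covers of $\T(U)_{\max}$ in the interval $[\T(U)_{\min},\T(U)_{\max}]$ (which, as you correctly flag, is the general-codimension version of Lemma~\ref{brick class} and is the only nontrivial step); the paper then cites Theorem~\ref{bij jirr and shard2} to pass from $\Filt(\{B_i\})$ to $\W(\Gamma(\UU))$, whereas you unpack that citation into its ingredients (Proposition~\ref{semistable wide2} and Theorem~\ref{main1}) and compute $\Theta(\W(U))=\Gamma(\UU)$ directly before applying $\W$.

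One small remark: your justification that $\Theta(B_i)=\Sigma(\T(U)_{\max}\covers\T_i)$ goes through the identity $B_i=B(\JJ_i)$, which you attribute to the proof of Corollary~\ref{poset=shard}; that proof does not actually state this. A cleaner route is to note that $\Theta(B_i)$ is a shard by Theorem~\ref{main1}(1), and that $B_i$ is semistable on the small wall $\TT(U)_{\max}\cap\TT_i$ by Proposition~\ref{Yurikusa}, so $\Theta(B_i)$ is the unique shard containing that small wall, namely $\Sigma(\T(U)_{\max}\covers\T_i)$.
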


\begin{proof}
Let  $\T(U)_{\max}$ (resp. $\T(U)_{\min}$) be the corresponding torsion class to  $\T(\UU)_{\max}$ (resp. to  $\T(\UU)_{\min}$).  
By Proposition \ref{max in n-2}, 
we have $$[\T(U)_{\min},\T(U)_{\max}]=[\Fac(H^0(U)),{}^\perp H^{-1}(\nu U)].$$
Since  $\W(U)={}^{\perp}H^{-1}(\nu U)\cap\Fac(H^0(U))^{\perp}$, by applying \cite[Theorem 1.4]{AP} to this interval, we have $$\W(U)=\Filt(\{B(\T_{\max}\to\T)|\T(\UU)_{\min}\leq\T\covered\T(\UU)_{\max}\}).$$
By Theorem \ref{bij jirr and shard2}, 
the right-hand side coincides with 
$$\W(\bigcap\{\Sigma(\T_{\max}\to\T))\ |\ \T(\UU)_{\min}\leq\T\covered\T(\UU)_{\max} \})=\W(\Gamma(\UU)).$$ 
Thus we get the conclusion. 
\end{proof}

The following proposition is the same type of Proposition \cite[Proposition 4.4]{Re3}, but our proof is different.

\begin{prop}\label{containg shard}
Let $\UU\in\Sigma(A)$ be a face and $\Gamma$ a shard intersection such that 
$\dim(\UU)=\dim(\Gamma)$ and $\UU\subset\Gamma$. 
Then we have $\Gamma=\Gamma(\UU)$. 
Moreover, $\Gamma$ is the intersection of all shards containing $\UU$.
\end{prop}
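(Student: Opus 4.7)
The plan is to combine the antitone Galois connection of Proposition \ref{semistable wide2} with Theorem \ref{shard int and gamma}, reducing everything to a dimension-to-simples correspondence.

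First I would verify the ``moreover'' clause separately. Set $\Gamma^*:=\bigcap\{\Sigma\in\shard A\mid\UU\subset\Sigma\}$. Each shard $\Sigma(\TT(\UU)_{\max}\covers\TT)$ appearing in the definition of $\Gamma(\UU)$ contains the $(n-1)$-dimensional facet $\TT(\UU)_{\max}\cap\TT$, which contains $\UU$, so $\Gamma^*\subset\Gamma(\UU)$. Conversely, $\Gamma^*$ is itself a shard intersection containing $\UU$, so the Galois connection together with Theorem \ref{shard int and gamma} yields
\[
\W(U)=\W(\UU)\supset\W(\Gamma^*)\supset\W(\Gamma(\UU))=\W(U),
\]
which forces $\W(\Gamma^*)=\W(\Gamma(\UU))$. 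Since $\Gamma=\Theta(\W(\Gamma))$ for every $\Gamma\in\shardint A$, this gives $\Gamma^*=\Gamma(\UU)$.

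Next, for the main equality, write $\Gamma=\bigcap_i\Sigma_i$ with $\Sigma_i\in\shard A$. Each $\Sigma_i\supset\Gamma\supset\UU$, so every $\Sigma_i$ appears in the family defining $\Gamma^*$, whence $\Gamma\supset\Gamma^*=\Gamma(\UU)$. The Galois connection then gives $\W(\Gamma)\subset\W(\Gamma(\UU))=\W(U)\simeq\mod B$ with $|B|=n-|U|$ by Proposition \ref{wide surj}. To promote this inclusion to an equality, I would use the dimension formula
\[
\dim\Theta(\W)=|A|-\sharp\simp(\W)\qquad(\W\in\wide A),
\]
whose $\leq$ direction is immediate from Proposition \ref{semistable wide2}(3) together with the linear independence of semibricks in $K_0(\mod A)_\R$, and whose $\geq$ direction follows by realizing $\W$ as $\W(U')$ for some face $\UU'=C(U')$ via Proposition \ref{wide surj} and then applying Theorem \ref{shard int and gamma}. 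Applied to $\Gamma$, this yields $\sharp\simp(\W(\Gamma))=|A|-\dim\Gamma=n-|U|=|B|$; thus $\W(\Gamma)\subset\W(U)\simeq\mod B$ is a wide subcategory with the maximal number of simples. Applying Theorem \ref{bij jirr and shard2}(1) to $B$, such a wide subcategory corresponds to a $0$-dimensional shard intersection in $\Sigma(B)$, which can only be $\{0\}$, and this pairs with $\mod B$ itself. Hence $\W(\Gamma)=\W(U)$, and using $\Gamma=\Theta(\W(\Gamma))$ we conclude $\Gamma=\Gamma(\UU)$.

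The main obstacle will be the lower bound in the dimension formula: showing that $\Theta(\W)$ attains the full dimension $|A|-\sharp\simp(\W)$ inside the intersection of the hyperplanes associated to $\simp(\W)$. This is where the geometric content of Theorem \ref{shard int and gamma} is essential---one produces an interior point of the right dimension by looking at $\Gamma(\UU')$ for the face $\UU'$ realizing $\W$. Once this formula is in hand, everything else is formal from the Galois connection and the anti-isomorphism $\shardint A\cong(\wide A)^{\mathrm{op}}$.
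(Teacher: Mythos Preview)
Your argument is correct and follows the same architecture as the paper's: both show $\W(\Gamma)=\W(\UU)=\W(\Gamma(\UU))$ via a simple-count and then invoke the bijection of Theorem~\ref{bij jirr and shard2}. The differences are in presentation and in how the simple-count step is justified. The paper proves the main assertion first and obtains the ``moreover'' as an immediate corollary (apply the first part to $\Gamma'$, which is sandwiched between $\UU$ and $\Gamma$ and so has the correct dimension); you reverse the order, using the clean sandwich $\W(\UU)\supset\W(\Gamma^*)\supset\W(\Gamma(\UU))=\W(\UU)$ to get $\Gamma^*=\Gamma(\UU)$ first. For the equality of simple counts, the paper simply asserts $\sharp\simp(\W(\UU))=\sharp\simp(\W(\Gamma))$ from $\dim\UU=\dim\Gamma$ and then cites Proposition~\ref{wide and rank} (Buan--Marsh) to conclude $\W(\UU)=\W(\Gamma)$. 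You make the underlying dimension formula $\dim\Theta(\W)=|A|-\sharp\simp(\W)$ explicit and then pass to $\Sigma(B)$ to avoid citing~[BM]; this is more self-contained but requires the linear independence of semibrick classes in $K_0(\mod A)_\R$ (true, since every left-finite semibrick completes to a simple-minded collection, but not recorded in the paper). Once you have $\sharp\simp(\W(\Gamma))=|B|$, the reduction to $\Sigma(B)$ is unnecessary: Proposition~\ref{wide and rank} gives $\W(\Gamma)=\W(U)$ in one line, which is exactly the paper's shortcut.
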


For a proof of Proposition \ref{containg shard}, we recall the following basic result. 

\begin{prop}\label{wide and rank}\cite{BM}
Let $A$ be a $\tau$-tilting finite algebra and $\W,\W'$ wide subcategories of $\mod A$. 
If $\W'\subset\W$ and $\sharp\simp\W =\sharp\simp \W'$, then $\W=\W'.$
\end{prop}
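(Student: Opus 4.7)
The plan is to apply Proposition \ref{wide surj} twice: first to realize $\W$ as a module category $\mod B$ via the equivalence in part (1), and then, inside $\mod B$, to apply Proposition \ref{wide surj} again in order to force $\W'$ to exhaust $\mod B$. The arithmetic engine throughout is the rank formula $\sharp\simp \W(U) = |A|-|U|$ from Proposition \ref{wide surj}(1), which converts the equality $\sharp\simp\W = \sharp\simp\W'$ into the vanishing of the 2-term presilting complex cutting out $\W'$.

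First I would apply Proposition \ref{wide surj}(2) to pick $U\in\twopsilt A$ with $\W=\W(U)$, and Proposition \ref{wide surj}(1) to fix a finite dimensional algebra $B$ together with an equivalence of abelian categories $\W\simeq\mod B$, so that $|B|=|A|-|U|=\sharp\simp\W$. Transporting $\W'$ across this equivalence yields a full subcategory $\W''\subseteq\mod B$. Because $\W$ is wide in $\mod A$, kernels, cokernels and extensions computed inside $\W$ coincide with those in $\mod A$, so $\W'\subseteq\W$ is automatically wide in $\W$, and therefore $\W''$ is wide in $\mod B$, with $\sharp\simp\W''=\sharp\simp\W'=|B|$.

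Before reapplying Proposition \ref{wide surj}, I need to verify that $B$ is itself $g$-finite. Since $A$ is $g$-finite, $|\fLsbrick A|<\infty$ by Definition \ref{g-finite} and $\brick A=\fLsbrick A$ by Theorem \ref{finite brick}; hence $|\brick A|<\infty$, and so $|\sbrick A|\le 2^{|\brick A|}<\infty$, which together with Theorem \ref{bij sbrick wide} gives $|\wide A|<\infty$. The equivalence $\mod B\simeq\W$ produces a bijection $\wide B\cong\wide\W$, and wide subcategories of $\W$ are wide in $\mod A$ by the same coincidence of kernels/cokernels/extensions; this yields an injection $\wide B\hookrightarrow\wide A$, forcing $|\wide B|<\infty$. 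Running the dictionary backward gives $|\sbrick B|<\infty$, hence $|\fLsbrick B|<\infty$, i.e.\ $B$ is $g$-finite.

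Finally, Proposition \ref{wide surj} applied inside $\mod B$ produces some $U'\in\twopsilt B$ with $\W''=\W_B(U')$ and $\sharp\simp\W''=|B|-|U'|$. Combined with $\sharp\simp\W''=|B|$, this forces $|U'|=0$, so $U'=0$; but then
$$\W_B(0)={}^{\perp}H^{-1}(\nu_B 0)\cap\Fac(H^0(0))^{\perp}=\mod B,$$
and pulling back across the equivalence gives $\W'=\W$, as required. The main obstacle is the intermediate $g$-finiteness of $B$: it is intuitively clear but only becomes formal once the bijective dictionary of Theorem \ref{bij sbrick wide} and Theorem \ref{finite brick} is invoked to convert the ambient finiteness of $\wide A$ into finiteness of $\fLsbrick B$.
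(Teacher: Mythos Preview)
The paper does not supply its own proof of Proposition \ref{wide and rank}; it is quoted as an external result from \cite{BM} and used as a black box in the proof of Proposition \ref{containg shard}. So there is no argument in the paper to compare yours against.

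Your argument is correct and is a pleasant, self-contained derivation from the machinery already established in the paper. The two substantive steps---realizing $\W$ as $\mod B$ via Proposition \ref{wide surj}, and then re-applying the surjection $\twopsilt B\to\wide B$ inside $\mod B$ to force $|U'|=0$---are sound, and the computation $\W_B(0)=\mod B$ is exactly as you say. The care you take to check that $B$ is again $g$-finite is necessary, since the surjectivity in Proposition \ref{wide surj}(2) relies on \cite{Y} and does require $\tau$-tilting finiteness. One cosmetic point: the literal equality ``$\brick A=\fLsbrick A$'' in Theorem \ref{finite brick} is awkwardly stated in the paper, and using it to deduce $|\brick A|<\infty$ leans on an identification that is not spelled out there. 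A cleaner route inside the paper's framework is to use the bijections $\tors A\cong\sbrick A\cong\wide A$ from Theorems \ref{ahikm} and \ref{bij sbrick wide} together with $|\tors A|<\infty$ (which is immediate from $g$-finiteness via Theorem \ref{poset iso} and Theorem \ref{finite brick}); or, even more directly, to observe that bricks in $\mod B$ correspond under the equivalence to bricks of $A$ lying in $\W$, so $|\brick B|\le|\brick A|<\infty$. Either way the conclusion is the same, and your overall strategy is valid.
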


\begin{proof}[Proof of Proposition \ref{containg shard}]
Since $\UU\subset\Gamma$, we have $\W(\UU)\supset\W(\Gamma)$. 
Because $\sharp\simp(\W(\UU))=\sharp\simp(\W(\Gamma))$ by $\dim(\UU)=\dim(\Gamma)$, Proposition \ref{wide and rank} implies $\W(\UU)=\W(\Gamma)$. 

On the other hand, Proposition \ref{wide surj} and 
Theorem \ref{shard int and gamma} imply that 
$\W(U)=\W(\UU)=\W(\Gamma(\UU))$. 
Therefore, we get $\W(\Gamma)=\W(\Gamma(\UU))$. 
Since $\Gamma$ and $\Gamma(\UU)$ are both shard intersections, we have $\Gamma=\Gamma(\UU)$ by Theorem 
\ref{bij jirr and shard2}. Thus we get the first assertion. 
Let $\Gamma'$ be the intersection of all shards containing $\UU$. Then we have $\UU\subset\Gamma'\subset\Gamma$ and hence $\UU$ is a full-dimensional face contained in $\Gamma'$. Hence the first assertion implies $\Gamma'=\Gamma(\UU)=\Gamma$.
\end{proof}

Finally we discuss two important properties of shard intersections. 
The first one is a relationship with the \emph{core label order}.

\begin{defi}
\begin{itemize}
\item[(1)] 
For $\RR\in\C(A)$, we define 
$\RR_{\downarrow}:=\bigwedge\{\RR'\in\C(A)|\RR\covers\RR'\}$.
Moreover, for $\RR,\sS\in\C(A)$, we define  
$$\jirr[\RR,\sS]:=\{\gamma(\TT\to \TT'):\RR\leq \TT'\covered \TT\leq \sS\}.$$
Then, for $\RR,\TT\in\C(A)$, we define the \emph{core label order} $\RR\leq_{\textnormal{CLO}}\TT$ by 
$\jirr[\RR_{\downarrow},\RR]\subset\jirr[\TT_{\downarrow},\TT].$

\item[(2)] By Theorem \ref{bij jirr and shard2}, 
any shard intersection $\Gamma\in\shardint A$ is given by $\SSS(\RR)=\Gamma$ for some $\RR\in\C(A)$, where $\SSS:=\cap\Sigma(-):\C(A)\to\shardint A$ is the bijective map. 
Then we define the \emph{shard intersection order}  
$\RR\leq_{\textnormal{SI}}\TT$ if 
$\SSS(\RR)\subset\SSS(\TT).$ Thus the partial order of $\shardint A$ is identified with $(\C(A),\leq_{\textnormal{SI}})$.
\end{itemize}
\end{defi}

The following theorem, which corresponds to  \cite[Proposition 5.7]{Re3}, relates the above two partial orders. 

\begin{prop}\label{poset iso wide shard}
We have 
$$\RR\leq_{\textnormal{SI}}\TT\Longleftrightarrow \RR\geq_{\textnormal{CLO}}\TT.$$
Thus we have an anti-isomorphism $(\C(A),\leq_{\textnormal{SI}})\cong(\C(A),\leq_{\textnormal{CLO}})$.
\end{prop}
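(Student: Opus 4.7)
The plan is to translate both partial orders into comparisons of wide subcategories via the bijections of Theorem \ref{bij jirr and shard2}, and then exploit the fact that a wide subcategory is determined by its set of bricks.

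For $\RR \in \C(A)$, let $\RR_1, \ldots, \RR_k$ be the chambers covered by $\RR$, let $U_\RR \in \twopsilt^{n-k} A$ be the maximal common direct summand of the $2$-term silting complexes corresponding to $\RR$ and to each $\RR_i$, and set $\UU_\RR := C(U_\RR)$. My first task is to verify
\[
[\RR_\downarrow, \RR] = \C(A, \UU_\RR),
\]
so that $\RR$ and $\RR_\downarrow$ are respectively the maximum and minimum chambers containing $\UU_\RR$ as a face. Since each $\RR_i \in \C(A, \UU_\RR)$, Proposition \ref{max in n-2} gives $\T(\UU_\RR)_{\min} \leq \RR_i$ for every $i$ and hence $\T(\UU_\RR)_{\min} \leq \RR_\downarrow$. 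For the reverse inequality, I pass to the rank-$k$ reduced algebra $B$ provided by Theorem \ref{reduction}: under this reduction $\RR$ corresponds to $\mod B \in \tors B$, its covers correspond to the covers of $\mod B$, and the meet of all such covers is $\{0\}$, because every such cover is brick-labeled by a simple $B$-module (Proposition \ref{Yurikusa}), and a torsion class excluding every simple must be zero.

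Once the interval is identified, the reduction isomorphism $[\RR_\downarrow, \RR] \cong \tors B$ together with the compatibility of brick labeling (Lemma \ref{brick class}(1) and its natural extension to general rank) shows that the bricks labeling the Hasse arrows of $[\RR_\downarrow, \RR]$ are precisely $\brick B = \brick \W(\UU_\RR)$, using Proposition \ref{wide surj}(1) to identify $\mod B \simeq \W(\UU_\RR)$. Via the bijection $B(-): \jirr A \to \brick A$ of Theorem \ref{main1}(2), this identifies $\jirr[\RR_\downarrow, \RR]$ with $\brick \W(\UU_\RR)$. From the definitions we also have $\SSS(\RR) = \bigcap_{\RR \covers \RR'} \Sigma(\RR \covers \RR') = \Gamma(\UU_\RR)$, and Theorem \ref{shard int and gamma} together with Proposition \ref{wide surj}(1) yields $\W(\SSS(\RR)) = \W(\Gamma(\UU_\RR)) = \W(\UU_\RR)$. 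Because a wide subcategory is determined by its bricks, we can then chain
\begin{align*}
\RR \geq_{\textnormal{CLO}} \TT
&\iff \jirr[\RR_\downarrow, \RR] \supset \jirr[\TT_\downarrow, \TT] \\
&\iff \W(\SSS(\RR)) \supset \W(\SSS(\TT)) \\
&\iff \SSS(\RR) \subset \SSS(\TT) \\
&\iff \RR \leq_{\textnormal{SI}} \TT,
\end{align*}
where the penultimate equivalence is the poset anti-isomorphism $\Theta: \wide A \to \shardint A$ of Theorem \ref{bij jirr and shard2}. The anti-isomorphism $(\C(A), \leq_{\textnormal{SI}}) \cong (\C(A), \leq_{\textnormal{CLO}})$ then follows formally.

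The main obstacle will be the first step: the identification $[\RR_\downarrow, \RR] = \C(A, \UU_\RR)$ together with the matching of brick labels with $\brick \W(\UU_\RR)$ in full generality. While the $k=2$ case is exactly Lemma \ref{brick class}, the general case demands that the reduction isomorphism of Theorem \ref{reduction} preserves brick labels throughout the entire interval, not merely on individual covers of the top element. Once this is in hand, all remaining equivalences are formal consequences of the Galois connection of Proposition \ref{semistable wide2} and the bijections of Theorem \ref{bij jirr and shard2}.
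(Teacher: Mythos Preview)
Your proof is correct, but it follows a genuinely different route from the paper's. The paper argues geometrically via Proposition~\ref{containg shard}: since $\SSS(\RR)$ and $\UU_\RR$ have the same dimension and $\UU_\RR\subset\SSS(\RR)$, a shard contains $\SSS(\RR)$ if and only if it contains $\UU_\RR$, which happens if and only if it separates two adjacent chambers of $[\RR_\downarrow,\RR]$; applying the bijection $\JJ$ then gives $\jirr[\RR_\downarrow,\RR]=\{\JJ(\Sigma):\SSS(\RR)\subset\Sigma\}$, and the equivalence follows because each shard intersection is the intersection of all shards containing it. You instead pass entirely through wide subcategories: you identify $B(\jirr[\RR_\downarrow,\RR])$ with $\brick\W(\SSS(\RR))$ using the general-rank reduction (extending Lemma~\ref{brick class}) together with Theorem~\ref{shard int and gamma}, and then use the anti-isomorphism $\W:\shardint A\to\wide A$ of Theorem~\ref{bij jirr and shard2} plus the fact that a wide subcategory is determined by its set of bricks. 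This is precisely the alternative route the paper alludes to in its remark after the proof (via \cite[Theorem~4.25]{En2}), now carried out internally using the paper's own machinery. Your approach bypasses Proposition~\ref{containg shard} at the cost of invoking the brick-label compatibility of Jasso--Asai--Pfeifer reduction in arbitrary rank; the paper's approach stays within the shard geometry but needs that extra proposition. Both share the preliminary identification $[\RR_\downarrow,\RR]=\C(A,\UU_\RR)$, which the paper states without proof and you justify carefully via reduction.
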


A proof of this proposition can be shown by the similar argument of \cite[Proposition 5.7]{Re3}. For the convenience of reader, we give a proof.  

\begin{proof}
The interval $[\RR_{\downarrow},\RR]$ coincides with $\C(A,\UU)$, where $\UU$ is the intersection of $\RR$ with all chambers $\TT$ such that $\TT\covered\RR$. 
Then $\SSS(\RR)$ contains $\UU$ and $\dim(\UU)=\dim(\SSS(\RR))$. 
Thus,  by Proposition \ref{containg shard}, 
a shard $\Sigma$ contain $\SSS(\RR)$ if and only if it contains $\UU$. 
On the other hand, a shard $\Sigma$ contains $\UU$ if and only if it separates two adjacent cones $\VV$ and $\VV'$ of $[\RR_{\downarrow},\RR]$. 
Thus the set $\jirr[\RR_{\downarrow},\RR]$ and 
$\{\JJ(\Sigma)\ |\ \SSS(R)\subset\Sigma\}$ coincide. 

Moreover $\RR\leq_{\textnormal{SI}}\TT$, which is equivalent to $\SSS(\RR)\subset\SSS(\TT)$, if and only if 
$\{\Sigma\ |\ \SSS(\RR)\subset\Sigma\}\supset \{\Sigma\ |\ \SSS(\TT)\subset\Sigma\}$. This is also equivalent to 
$\{\JJ(\Sigma)\ |\ \SSS(\RR)\subset\Sigma\}\supset \{\JJ(\Sigma)\ |\ \SSS(\TT)\subset\Sigma\}$, that is, 
$\jirr[\RR_{\downarrow},\RR]\supset\jirr[\TT_{\downarrow},\TT]$.
\end{proof}

We note that by Theorem \ref{bij jirr and shard2}, $\shardint A$ is anti-isomorphic to $\wide A$. Then, we can also get Proposition \ref{poset iso wide shard} by 
\cite[Theorem 4.25]{En}, which implies that $\wide A\cong(\C(A),\leq_{\textnormal{CLO}})$. 

The second important property is gradedness.
Recall that a finite poset $\PP$ is \emph{graded} if it is equipped with a rank function $\rho:\PP\to \mathbb{N}$ such that 
$\rho(x)=\rho(y)+1$ if $x\covers y$.
Then we have the following result \cite[Proposition 5.1]{Re3}.

\begin{prop}
The partial order $\shardint A$ and hence $\wide A$ is graded. 
More precisely, the rank of $\SSS(\RR)\in\shardint A$ is given by the number of lower hyperplanes of $\RR$ and the rank of $\W\in\wide A$ is given by the number of $\simp(\W)$. 
\end{prop}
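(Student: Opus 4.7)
The plan is to use the poset anti-isomorphism $\W : \shardint A \to \wide A$ from Theorem \ref{bij jirr and shard2}(1) to reduce the gradedness of $\shardint A$ to the gradedness of $\wide A$ by the function $\sharp\simp$. First, I will verify the numerical rank formula $\sharp\simp(\W(\Gamma)) = |A| - \dim(\Gamma)$ for every $\Gamma \in \shardint A$. By Proposition \ref{containg shard}, $\Gamma = \Gamma(\UU)$ for some face $\UU = C(U)$ of $\Sigma(A)$ with $\dim\UU = \dim\Gamma$; Theorem \ref{shard int and gamma} then yields $\W(\Gamma) = \W(\UU) = \W(U)$, and Proposition \ref{wide surj}(1) gives $\sharp\simp(\W(U)) = |A| - |U| = |A| - \dim\Gamma$. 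Specialising to $\Gamma = \SSS(\RR)$: let $k$ be the number of lower hyperplanes of $\RR$, i.e., the number of chambers covered by $\RR$, and set $\UU_\RR := \bigcap_{\RR\covers\RR'}(\RR\cap\RR')$. Nonsingularity of $\Sigma(A)$ (Proposition \ref{property of g-cone}(1)) makes $\RR$ a simplicial cone, whose $k$ lower facets span linearly independent hyperplanes, so $\dim\UU_\RR = |A| - k$; since each shard $\Sigma(\RR\covers\RR')$ lies in its ambient hyperplane and contains $\UU_\RR$, the element $\SSS(\RR) = \Gamma(\UU_\RR)$ satisfies $\dim\SSS(\RR) = \dim\UU_\RR = |A| - k$. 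Hence the number of lower hyperplanes of $\RR$ equals $\sharp\simp(\W(\SSS(\RR)))$.

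The core step is to show by induction on $|A|$ that every cover $\W \covered \W'$ in $\wide A$ satisfies $\sharp\simp(\W) = \sharp\simp(\W') - 1$; the base case $|A|\leq 1$ is immediate. In the inductive step, split into two cases. If $\W' \subsetneq \mod A$, use Proposition \ref{wide surj}(2) to write $\W' = \W(U')$ with $|U'| \geq 1$, and invoke the equivalence $\W' \simeq \mod B$ of Proposition \ref{wide surj}(1) with $|B| = |A| - |U'| < |A|$. Since wide subcategories of $\mod A$ contained in $\W'$ are precisely the wide subcategories of the abelian category $\W'$, the equivalence induces a poset isomorphism onto $\wide B$ which preserves simple objects; under it, the cover $\W \covered \W'$ becomes a cover $\W \covered \mod B$ in $\wide B$, and the induction hypothesis yields $\sharp\simp(\W) = |B| - 1 = \sharp\simp(\W') - 1$.

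If $\W' = \mod A$, so $\W$ is a co-atom, I claim $\W = \W(U)$ with $|U| = 1$, giving $\sharp\simp(\W) = |A| - 1$. By Proposition \ref{wide surj}(2) write $\W = \W(U)$ with $|U| \geq 1$; if $|U| \geq 2$, decompose $U = U_1 \oplus U_2$ with both summands nonzero. The defining formula $\W(U) = {}^\perp H^{-1}(\nu U) \cap \Fac(H^0(U))^\perp$ is additive in $U$, so $\W(U) = \W(U_1) \cap \W(U_2) \subseteq \W(U_1)$; Proposition \ref{wide surj}(1) shows $\sharp\simp(\W(U)) = |A| - |U| < |A| - |U_1| = \sharp\simp(\W(U_1)) < |A|$, forcing strict inclusions $\W(U) \subsetneq \W(U_1) \subsetneq \mod A$ and contradicting $\W$ being a co-atom. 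Hence $|U| = 1$ and $\sharp\simp(\W) = |A| - 1$. Combining the two cases shows $\wide A$ is graded by $\sharp\simp$, and transporting through the anti-iso yields that $\shardint A$ is graded by $|A| - \dim(-)$, which coincides with the number of lower hyperplanes of $\RR$ when the element is $\SSS(\RR)$ by the first paragraph.

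The main obstacle is the inductive reduction in the second paragraph: one needs the equivalence $\W' \simeq \mod B$ of Proposition \ref{wide surj}(1) to be exact, to restrict to a poset isomorphism of wide subcategories, and to preserve simple objects and $\sharp\simp$. These properties are standard within the $\tau$-tilting reduction theory cited in Proposition \ref{wide surj} (see \cite{J,A2,BST,Y}), but must be invoked carefully. A secondary subtlety, used in the first paragraph, is that $\Gamma(\UU)$ has the same dimension as $\UU$; this follows because the $|A| - |U|$ shards defining $\Gamma(\UU)$ lie in hyperplanes containing $\UU$, and these hyperplanes—being the facets of the simplicial cone $\T(\UU)_{\max}$ that meet $\UU$—intersect precisely in the linear span of $\UU$.
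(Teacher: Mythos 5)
Your proof is correct, but it takes a genuinely different route from the paper. The paper works directly on $\shardint A$: following \cite[Lemma 2.10]{Re2}, given $\Gamma\supset\Gamma'$ one nests a full-dimensional face of $\Gamma'$ inside one of $\Gamma$, then uses Proposition~\ref{containg shard} to show that equal dimension forces $\Gamma=\Gamma'$ and that a dimension gap greater than one produces an intermediate shard intersection $\Gamma(\UU'')$; the rank is thus $\dim(-)$ (equivalently the number of lower hyperplanes, via the reverse-containment convention of \cite{Re2}), and the statement for $\wide A$ is then transported through Theorem~\ref{bij jirr and shard2}. You instead prove gradedness on the $\wide A$ side with rank function $\sharp\simp$, by induction on $|A|$ using the $\tau$-tilting reduction $\W(U)\simeq\mod B$ with $|B|=|A|-|U|$, together with the additivity $\W(U_1\oplus U_2)=\W(U_1)\cap\W(U_2)$ to pin down coatoms, then translate back to $\shardint A$ via the numerical identity $\sharp\simp\W(\Gamma)=|A|-\dim\Gamma$. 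The two arguments are complementary: the paper's is shorter and purely geometric, at the cost of importing \cite[Lemma 2.10]{Re2} as a black box on fans; yours stays inside representation theory but must invoke that the reduction equivalence of Proposition~\ref{wide surj}(1) is exact, restricts to a poset isomorphism on wide subcategories, and preserves simple objects — all standard in the literature cited (\cite{J,A2}) but worth stating explicitly. Notably, the paper itself remarks at the end of the section that gradedness of $\wide A$ ``can be directly shown by $\tau$-tilting theory (without using shards)'' — your proof supplies exactly that alternative argument.
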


\begin{proof}
Assume that $\Gamma\supset\Gamma'$ for $\Gamma,\Gamma'\in\shardint A$. 
Let $\UU'$ be some full-dimensional face in $\Gamma'$.
As in \cite[Lemma 2.10]{Re3}, we can take a face $\UU$ of $\Sigma(A)$ which is full-dimensional in $\Gamma$ and $\UU'$ is a face of $\UU$. 
If $\dim(\Gamma)=\dim(\Gamma')$, then $\dim(\UU)=\dim(\UU')$ and hence $\Gamma=\Gamma(\UU')=\Gamma'$ by Proposition \ref{containg shard}. 
Moreover, if $\dim(\Gamma)>\dim(\Gamma')+1$, then 
$\dim(\UU)>\dim(\UU')+1$. Hence we can take a face $\UU''$ such that $\UU'\subsetneq\UU''\subsetneq\UU$ and 
$\Gamma'\subsetneq\Gamma(\UU'')\subsetneq\Gamma$. 
Thus we get the conclusion for $\shardint A$. The statement for $\wide A$ follows from Theorem \ref{bij jirr and shard2} and the first statement.
\end{proof}

The gradedness of wide subcategories can be directly shown by $\tau$-tilting theory (without using shards). The author would like to thank
Haruhisa Enomoto for pointing out this fact. 
\section*{Acknowledgments} 
The author would like to thank Haruhisa Enomoto for variable discussions and helpful comments. 
He would also like to thank Nathan Reading and Toshiya Yurikusa for answering some questions. He is very grateful to the referee for the careful reading of the paper and for the very kind suggestions that significantly improved the paper.

{}

\begin{thebibliography}{A}
\bibitem[AI]{AI} T. Aihara, O. Iyama, \emph{Silting mutation in triangulated categories},
J. Lond. Math. Soc. (2) 85 (2012), no. 3, 633--668.



\bibitem[AIR]{AIR} T. Adachi, O. Iyama, I. Reiten, $\tau$-tilting theory, Compos. Math. 150 (2014), no. 3, 415--452. 

\bibitem[AHIKM]{AHIKM} T. Aoki, A. Higashitani, O. Iyama, R. Kase, Y. Mizuno, \emph{Fans and polytopes in tilting theory I: Foundations},  arXiv:2203.15213.





\bibitem[A1]{A1} S. Asai, \emph{Semibricks}, Int. Math. Res. Not. IMRN 2020, no. 16, 4993--5054.

\bibitem[A2]{A2} S. Asai, \emph{The wall-chamber structures of the real Grothendieck groups}, Adv. Math. 381 (2021), 107615.


\bibitem[AP]{AP} 
S. Asai, C.~Pfeifer, \emph{Wide subcategories and lattices of torsion classes}, Algebr. Represent. Theory 25 (2022), no. 6, 1611--1629.

\bibitem[AMN]{AMN} H. Asashiba, Y. Mizuno, K. Nakashima, \emph{Simplicial complexes and tilting theory for Brauer tree algebras}, J. Algebra 551 (2020), 119--153.

\bibitem[B]{B} E. Barnard, \emph{The canonical join complex}, Electron. J. Combin. 26 (2019), no. 1, Paper No. 1.24, 25 pp.

\bibitem[BCZ]{BCZ} E. Barnard, A. Carroll, S. Zhu, 
\emph{Minimal inclusions of torsion classes}, 
 Algebr. Comb. 2 (2019), no. 5, 879--901.

\bibitem[BST]{BST} T. Brustle, D. Smith, H. Treffinger,
\emph{Wall and chamber structure for finite-dimensional algebras}, 
Advances in Mathematics, Volume 354, 1 2019. 


\bibitem[BM]{BM} A. Buan, B. Marsh, \emph{Mutating signed $\tau$-exceptional sequences}, arXiv:2211.10428.


\bibitem[DIJ]{DIJ} L. Demonet, O. Iyama, G. Jasso, \emph{$\tau$-tilting finite algebras, bricks, and $g$-vectors}, Int. Math. Res. Not. IMRN 2019, no. 3, 852--892.


\bibitem[DIRRT]{DIRRT} L. Demonet, O. Iyama, N. Reading, I. Reiten, H. Thomas, \emph{Lattice theory of torsion classes: beyond $\tau$-tilting theory}, Trans. Amer. Math. Soc. Ser. B 10 (2023), 542--612.



  

\bibitem[En]{En} 
H. Enomoto, \emph{From the lattice of torsion classes to the posets of wide subcategories and ICE-closed subcategories}, Algebr. Represent. Theory26(2023), no.6, 3223--3253.

\bibitem[Go]{Go} V. A. Gorbunov, \emph{Canonical decompositions in complete lattices},
Algebra i Logika 17 (1978), no. 5, 495--511, 622.




\bibitem[J]{J}
G.~Jasso, \emph{Reduction of $\tau$-tilting modules and torsion pairs}, Int. Math. Res. Not. IMRN 2015, no. 16, 7190--7237.


\bibitem[KV]{KV} B. Keller, D. Vossieck, \emph{Aisles in derived categories}, Deuxieme Contact Franco-Belge en Algebre (Faulx-les-Tombes, 1987). Bull. Soc. Math. Belg. Sér. A 40 (1988), no. 2, 239--253.


\bibitem[Ki]{Ki} A. D. King, \emph{Moduli of representations of finite-dimensional algebras}, Quart. J. Math. Oxford Ser. (2) 45 (1994), no. 180, 515--530.

\bibitem[KY]{KY} S.~Koenig, D.~Yang, \emph{Silting objects, simple-minded collections, $t$-structures and co-$t$-structures for finite-dimensional algebras}, Doc. Math. \textbf{19} (2014), 403--438.


\bibitem[MS]{MS} F. Marks, J. Stovicek, \emph{Torsion classes, wide subcategories and localisations}, Bull. London Math. Soc.
49 (2017), Issue 3, 405--416.


\bibitem[Re1]{Re1} N. Reading, \emph{Lattice and order properties of the poset of regions in a hyperplane arrangement}, Algebra Universalis50(2003), no.2, 179--205.

\bibitem[Re2]{Re2} N. Reading, \emph{The order dimension of the poset of regions in a hyperplane arrangement}, J. Combin. Theory Ser. A 104 (2003), no. 2, 265--285.

\bibitem[Re3]{Re3} N. Reading, \emph{Noncrossing partitions and the shard intersection order}, J. Algebraic Combin. 33 (2011), no. 4, 483--530. 


\bibitem[Re4]{Re4} N. Reading, \emph{Lattice theory of the poset of regions}, Lattice theory: special topics and applications. Vol. 2,
399–487, Birkhauser/Springer, Cham, 2016.



\bibitem[Ri]{Ri} C. M. Ringel, \emph{Representations of $K$-species and bimodules}, J. Algebra 41 (1976), no. 2, 269--302.



\bibitem[RST]{RST} N. Reading, D. E. Speyer, H. Thomas,  \emph{The fundamental theorem of finite semidistributive lattices}, Selecta
Math. (N.S.) 27 (2021), no. 4, Paper No. 59, 53 pp.


\bibitem[T1]{T1} H. Thomas, \emph{Stability, shards, and preprojective algebras}, Contemp. Math. 705 (2018), 251--262.

\bibitem[T2]{T2} H. Thomas, \emph{An introduction to the lattice of torsion classes}, Bull. Iranian Math. Soc. 47 (2021), suppl. 1, 35--55.

\bibitem[Y]{Y} T. Yurikusa, \emph{Wide subcategories are semistable}, Doc. Math. 23 (2018), 35--47.

\end{thebibliography}
\end{document}